\newtheorem{theorem}{Theorem}[section]
\newtheorem{corollary}[theorem]{Corollary}
\newtheorem{definition}[theorem]{Definition}
\newtheorem{lemma}[theorem]{Lemma}
\newtheorem{remark}[theorem]{Remark}
\newtheorem{proposition}[theorem]{Proposition}
\newtheorem*{notation}{Notation}
\newcommand{\assign}{:=}
\newcommand{\backassign}{=:}
\newcommand{\dd}{\mathrm{d}}
\newcommand{\nin}{\not\in}
\newcommand{\tmscript}[1]{\text{\scriptsize{$#1$}}}
\newcommand{\bbE}{\mathbb{E}}
\newcommand{\bbR}{\mathbb{R}}
\newcommand{\bbX}{\mathbb{X}}
\newcommand{\cL}{\mathcal{L}}
\newcommand{\cP}{\mathcal{P}}
\newcommand{\bb}[1]{\mathbb{#1}}
\newcommand{\mc}[1]{\mathcal{#1}}
\newcommand{\cdummy}{\cdot}
\newcommand{\tmop}[1]{\ensuremath{\operatorname{#1}}}
\begin{document}

\title{Additive functionals as rough paths}

\author{Jean-Dominique Deuschel\thanks{We gratefully acknowledge financial support by the DFG via Research Unit FOR2402.}\\ TU Berlin \and
Tal Orenshtein\footnotemark[1]\hspace{4pt}\thanks{The main part of the work was carried out while T.O. was employed at HU Berlin and TU Berlin.}\\ TU Berlin \& WIAS \and
Nicolas Perkowski\footnotemark[1]\hspace{4pt}\thanks{The main part of the work was carried out while N.P. was employed at MPI MIS Leipzig. N.P. gratefully acknowledges financial support by the DFG via the Heisenberg program.}\\ FU Berlin}

\maketitle

\begin{abstract}
  We consider additive functionals of stationary Markov processes and show that under Kipnis-Varadhan type conditions they converge in rough path topology to a Stratonovich Brownian motion, with a correction to the L\'evy area that can be described in terms of the asymmetry (non-reversibility) of the underlying Markov process. We apply this abstract result to three model problems: First we study random walks with random conductances under the annealed law. If we consider the It\^o rough path, then we see a correction to the iterated integrals even though the underlying Markov process is reversible. If we consider the Stratonovich rough path, then there is no correction. The second example is a non-reversible Ornstein-Uhlenbeck process, while the last example is a diffusion in a periodic environment.

  As a technical step we prove an estimate for the $p$-variation of stochastic integrals with respect to martingales that can be viewed as an extension of the rough path Burkholder-Davis-Gundy inequality for local martingale rough paths of~\cite{Friz2008, Chevyrev2019, friz2018differential} to the case where only the integrator is a local martingale.
\end{abstract}


\section{Introduction}

In recent years there has been an increased interest in the link between homogenization and rough paths. It had been observed previously that homogenization often gives rise to non-standard rough path limits~\cite{lejay2003importance, Friz2015}. The more recent investigations were initiated with the work of Kelly and Melbourne~\cite{Kelly2017, Kelly2016Smooth, Kelly2016} who study rough path limits of additive functionals of the form $\sqrt n \int_0^{n\cdot} f(Y_s) \dd s$, where $Y$ is a deterministic dynamical system with suitable mixing conditions. In that way they are able to prove homogenization results for the convergence of deterministic multiscale systems of the type
\begin{align*}
	\dd X^n & = \sqrt{n} b(X^n_t, Y^n_t) \dd t,\\
	\dd Y^n & = n f(Y^n_t) \dd t,
\end{align*}
for which under suitable conditions $X^n$ converges to an autonomous stochastic differential equation. This line of research was picked up and extended for example by \cite{Bailleul2017, Chevyrev2019Multiscale, Chevyrev2019Deterministic,lopusanschi2018levy,lopusanschi2018ballistic}. More recent results also cover discontinuous limits~\cite{Chevyrev2019Superdiffusive}.

Motivated by this problem, as well as by the aim of understanding the invariance principle for random walks in random environment in rough path topology, we want to study rough path invariance principles for additive functionals $\sqrt n \int_0^{n\cdot} f(X_s) \dd s$ of Markov processes $X$ in generic situations. If we are only interested in a central limit theorem at a fixed time, then there are of course many results of this type and many ways of showing them. See for example~\cite{Peligrad2019} for a recent and fairly general result. A particularly successful approach for proving such a central limit theorem and even the functional central limit theorem (invariance principle) is based on Dynkin's formula and martingale arguments, and it was developed by Kipnis and Varadhan~\cite{Kipnis1986} for reversible Markov processes and later extended to many other situations; see the nice monograph~\cite{Komorowski2012}. Here we extend this approach to the rough path topology and we study some applications to model problems like random walks among random conductances, additive functionals of Ornstein-Uhlenbeck processes, and periodic diffusions.

This can also be seen as a complementary direction of research with respect to the recent advances in regularity structures~\cite{Bruned2019, Chandra2016, Bruned2017Renormalising}, where the aim is to find generic convergence results for models associated to singular stochastic PDEs. In those works the equations tend to be extremely complicated, but the approximation of the noise is typically quite simple (the prototypical example is just a mollification of the driving noise, but~\cite{Chandra2016} also allow some stationary mixing random fields that converge to the space-time white noise by the central limit theorem). In our setting the equation that we study is very simple (just a stochastic ODE), but the approximation of the noise is very complicated and (at least for us) it seems  difficult to check whether the conditions of~\cite[Theorem~2.34]{Chandra2016} are satisfied for the kind of examples that we are interested in.

The most interesting model that we study here is probably the random walk among random conductances. Here we distribute i.i.d. conductances $(\eta(\{x,y\}))_{x,y \in \bb Z^d: x \sim y}$ on the bonds of $\bb Z^d$ (where $x \sim y$ means $x$ and $y$ are neighbors). Then we let a continuous time random walk move along $\bb Z^d$, with jump rate $\eta(\{x,y\})$ from $x$ to $y$ (resp. from $y$ to $x$). We are interested in the large scale behavior, i.e. we study $n^{-1/2} X_{n t}$ for $n \to \infty$. It is well known that the path itself converges in distribution under the annealed law to a Brownian motion $B$ with an effective diffusion coefficient. Our contribution is to extend this convergence to the rough path topology, which allows us for example to understand the limit of discrete stochastic differential equations
\begin{equation}\label{eq:intro-sde}
	\dd Y^n_t = \sigma(Y^n_{t-}) \dd X^n_t,
\end{equation}
but also of SPDEs driven by $X^n$. And here we encounter a surprise: Even though $X$ is in a certain sense reversible (more precisely the underlying Markov process of the environment as seen from the walker is reversible), the iterated integrals $\int_0^\cdot X^n_s \otimes \dd X^n_s$ do not converge to $\int_0^\cdot B_s \otimes \dd B_s$, but instead we see a correction: We have
\[
	\left(X^n, \int_0^\cdot X^n_{s-} \otimes \dd X^n_s\right) \longrightarrow \left(B, \int_0^\cdot B_s \otimes \dd B_s + \Gamma t\right),
\]
where $\Gamma$ is a correction given by
\[
	\Gamma = \frac{1}{2} \langle B, B \rangle_1 - E_{\pi} [\eta (\{0, e_1\})] I_d = \frac12 \left( \langle B, B \rangle_1 - E_{\pi} [\eta (\{0, e_1\}) + \eta(\{0,-e_1\})] I_d\right)
\]
for the law $\pi$ of the random conductances. Of course, $\Gamma$ vanishes if the conductances are deterministic (i.e. if $\pi$ is a Dirac measure). But if the conductances are truly random, then typically the effective diffusion is not just given by the expected conductance, and in $d=1$ one can even show that this is never the case (see the discussion at the top of p.89 of \cite{Komorowski2012}). Therefore, $\Gamma$ is typically non-zero, and the solution $Y^n$ of \eqref{eq:intro-sde} converges to the solution $Y$ of
\[
	\dd Y_t = \sigma(Y_t) \dd B_t + \sum_{j,k,\ell} \partial_k \sigma_{\cdot j}(Y_t) \sigma_{k\ell}(Y_t) \Gamma_{j \ell} \dd t.
\]
If on the other hand we denote by $\tilde X^n$ the linear interpolation of the pure jump path $X^n$, then $(\tilde X^n, \int_0\tilde X^n_s \dd \tilde X^n_s)$  converges to the limit that we would naively expect, namely to the Stratonovich rough path above $B$. From the point of view of stochastic calculus this is a bit surprising: After all, there are stability results for It\^o integrals \cite{Kurtz1991}, while the quadratic variation (i.e. the difference between It\^o and Stratonovich integrals) is very unstable. In fact, we are not aware of any previous results of this type (naive limit for the Stratonovich rough path, correction for the It\^o rough path), but it seems to be a generic phenomenon. The same effect appears for periodic diffusions, and we expect to see it for nearly all models treated in the monograph~\cite{Komorowski2012}.
On the other hand, for ballistic random walks in random environment, after centering, a correction to the \emph{Stratonovich} rough path is identified in terms of the expected stochastic area on a regeneration interval \cite[Theorem~3.3]{lopusanschi2018ballistic}. Moreover, for random walks in deterministic periodic environments simple examples for non-vanishing corrections are available \cite[Section 1.2]{lopusanschi2018levy} (or \cite[Section~4.2]{lopusanschi2018ballistic}).
For processes that can be handled with the Kipnis-Varadhan approach we generically expect to see a correction to the Stratonovich rough path if and only if the underlying Markov process is non-reversible.

\paragraph{Structure of the paper} In the next section we introduce some basic notions from rough path theory. Section~\ref{sec:kipnis-varadhan} presents our main result Theorem~\ref{thm:additive-rp}, the rough path invariance principle for additive functionals of stationary Markov processes, which holds under the same conditions as the abstract result in~\cite{Komorowski2012}. The proof is based on recent advances on It\^o rough paths with jumps due to Friz and Zhang~\cite{friz2018differential}, on stability results for It\^o integrals under the so called UCV condition by Kurtz and Protter~\cite{Kurtz1991}, on L\'epingle's Burkholder-Davis-Gundy inequality in $p$-variation~\cite{Lepingle1975}, and on repeated integrations by parts together with a new estimate on the $p$-variation of stochastic integrals (Proposition~\ref{prop:area-variation}). In Section~\ref{sec:applications} we apply our abstract result to three model problems: random walks among random conductances, additive functionals of Ornstein-Uhlenbeck processes, and periodic diffusions. Finally, Section~\ref{sec:proof-area-variation} contains the proof of Proposition~\ref{prop:area-variation} which might be of independent interest.

\paragraph{Notation} For two families $(a_i)_{i \in I},(b_i)_{i\in I}$ of real numbers indexed by $I$ the notation $a_i\lesssim b_i$ means that $a_i\le c\, b_i$ for every $i\in I$ where $c\in(0,\infty)$ is a constant. Let $\Delta_T:=\{s,t\in[0,T]: s\le t\}$ for $T>0$. We interpret any function $X:[0,T]\to\bbR^d$ also as a function on $\Delta_T$ via $X_{s, t}:=X_t-X_s$, $(s,t)\in\Delta_T$.
{For a metric space $(E,d)$ we write $C([0,T], E)$ resp. $D([0,T], E)$ for the continuous resp. c\`adl\`ag functions from $[0,T]$ to $E$. A function $X:\Delta_T \to E$ is called continuous resp. c\`adl\`ag if for all $s \in [0,T)$ the map $t\mapsto X_{s,t}$ on $[s,T]$ is continuous resp. c\`adl\`ag, and we write $C(\Delta_T, E)$ resp. $D(\Delta_T,E)$ for the corresponding function spaces.}

\section{Elements of rough path theory}

Here we recall some basic elements of rough path theory for It\^o rough paths with jumps. See~\cite{friz2018differential} for much more detail.

Let us write $\|X\|_{\infty, [0, T]} : = \sup_{t \in [0, T]} |X_t|$ (resp. $\|\bbX\|_{\infty, [0, T]} : = \sup_{(s,t) \in \Delta_T} |\bbX_{s,t}|$) to denote the uniform norm of $X \in  D([0, T], \bbR^d)$ (resp. $\bbX \in D(\Delta_T, \bbR^{d \times d})$). For $0<p<\infty$ and a normed space $(E,|\cdot|_{E})$, we define the $p$-variation of {$\Xi : \Delta_T \to E$ (and so in particular of $\Xi : [0,T]\to E$)}  by
\begin{align}
  \|\Xi\|_{p, [0,T]}
  \;: =\;
  \bigg(
    \sup_{\cP} \sum_{[s,t] \in \cP} |\Xi_{s,t}|^p
  \bigg)^{\!\!1/p}
  \;\in\;
  [0, +\infty],
\end{align}
where the supremum is taken over all finite partitions $\cP$ of $[0,T]$ and the summation is over all intervals $[s,t] \in \cP$.  Note that for any $0 < p \leq q < \infty$, we have that $\|\Xi\|_{q, [0,T]} \leq \|\Xi\|_{p,[0,T]}$.

\begin{definition} [$p$-variation rough path space]
For $p \in [2,3)$, the space $D_{p\text{-}\mathrm{var}}([0,T], \bbR^d\times \bbR^{d \times d})$ (resp. $C_{p\text{-}\mathrm{var}}([0,T], \bbR^d\times \bbR^{d \times d})$) of c\`adl\`ag (resp. continuous) $p$-variation rough paths is defined by the subspace of all functions $(X, \bbX) \in D([0,T], \bbR^d) \times D(\Delta_T, \bbR^{d \times d})$ satisfying Chen's relation, that is,
\begin{equation}\label{eq:Chen}
\bbX_{r,t} - \bbX_{r,s} - \bbX_{s,t} = X_{r,s} \otimes X_{r,t}
\end{equation}
for $0 \leq r \leq s \leq t \leq T$, and
\begin{align}
  |X_0| \,+\,
  \|X\|_{p, [0,T]}
  \,+\, \|\bbX\|_{p/2, [0,T]}^{1/2}
  \;<\;
  \infty.
\end{align}
\end{definition}
{The $p$-variation Skorohod distance on $D_{p\text{-}\mathrm{var}}([0,T], (\bbR^d,\bbR^{d \times d}))$ is
\[
	\sigma_{p,[0,T]}((X,\mathbb X), (Y, \bb Y)) := \inf_{\lambda \in \Lambda_T} \{|\lambda| \vee \big( \|X\circ \lambda - Y \circ \lambda\|_{p,[0,T]} + \|\bb X\circ(\lambda,\lambda) - \bb Y\circ(\lambda,\lambda)\|_{p/2,[0,T]} \big) \},
\]
where $\Lambda_T$ are the strictly increasing bijective functions from $[0,T]$ onto itself, and $|\lambda| = \sup_{t \in [0,T]} |\lambda(t) - t|$. The uniform Skorohod distance is defined similarly, except with the $p$-variation respectively $p/2$-variation distance replaced by the uniform distance; see \cite[Section~5]{friz2018differential} for details.
}

For $X,Y\in D([0,T],\bbR^d)$ we use the notation $\int_0^t Y_{s-} \otimes \dd X_s$ for the left-point Riemann integral, that is
\[
\int_0^t Y_{s-} \otimes \dd X_s := {\int_{(0,t]} Y_{s-}\otimes \dd X_s := }
{\lim_{n \to \infty}\big\{\sum_{[u,v]\in\cP_n} Y_{u} \otimes \big(X_{v} - X_{u}\big)\big\},}
\]
whenever this limit is well defined along an {implicitly fixed sequence of partitions $(\cP_n)$ of $[0,t]$ with mesh size going to zero.}
Note that if $X$ is a semimartingale and $Y$ is adapted to the same filtration, then this definition coincides with the Ito integral.
We remark also that the iterated integrals
\[
\bbX_{s,t}:=\int_s^t X_{s,u-} \otimes \dd X_u = \int_{(s,t]} X_{s,u-}\otimes \dd X_u,
\]
satisfy Chen's relation \eqref{eq:Chen}. Moreover, so do $\tilde \bbX_{s,t} := \bbX_{s,t} + (t-s) \Gamma$, for any fixed matrix $\Gamma$.

\begin{remark}
  \label{rmk:one-parameter}Note that by Chen's relation $\mathbb{X}_{s, t}
  =\mathbb{X}_{0, t} -\mathbb{X}_{0, s} - X_{0, s} \otimes X_{s, t}$ whenever
  $0 \leqslant s \leqslant t \leqslant T$, and therefore
\begin{align*}
    \| \mathbb{X}-\mathbb{Y} \|_{\infty, [0, T]} & = \sup_{0 \leqslant s < t
    \leqslant T} | \mathbb{X}_{s, t} -\mathbb{Y}_{s, t} |\\
    & \lesssim \| \mathbb{X}_{0, \cdot} -\mathbb{Y}_{0, \cdot}
    \|_{\infty, [0, T]} + (\| X_{0, \cdot} \|_{\infty, [0, T]} \vee \| Y_{0,
    \cdot} \|_{\infty, [0, T]}) \| X_{0, \cdot} - Y_{0, \cdot}
    \|_{\infty, [0, T]} .
  \end{align*}
  Consequently, the uniform resp. Skorohod distance of the (one-parameter) paths $(X_{0, \cdot},
  \mathbb{X}^n_{0, \cdot})$ and $(Y_{0,\cdot}, \bb Y_{0,\cdot})$ controls the uniform resp. Skorohod distance of $(X,\bb X)$ and $(Y,\bb Y)$.
\end{remark}

The following lemma by \cite{friz2018differential} will be useful in the sequel.
\begin{lemma}  \label{lem:rp-conv}
  Let $ (Z^n, \mathbb{Z}^n)$ be a sequence of c\`adl\`ag rough paths and let $p<3$. Assume that
  there exists a c\`adl\`ag rough path $(Z, \mathbb{Z})$ such that $(Z^n,
  \mathbb{Z}^n) \rightarrow (Z, \mathbb{Z})$ in distribution in the Skorohod (resp. uniform) topology and that the family of real valued
  random variables $(\| (Z^n, \mathbb{Z}^n) \|_{p, [0, T]})_n$ is tight. Then
  $(Z^n, \mathbb{Z}^n) \rightarrow (Z, \mathbb{Z})$ in distribution in the
  $p'$-variation Skorohod (resp. uniform) topology for all $p' \in (p, 3)$.
\end{lemma}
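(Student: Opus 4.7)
The plan is to deduce convergence in the finer $p'$-variation topology from the elementary interpolation inequality
\[
\|\Xi\|_{p',[0,T]}^{p'} \;\leq\; \|\Xi\|_{\infty,[0,T]}^{p'-p}\,\|\Xi\|_{p,[0,T]}^{p} \qquad (0<p\leq p'),
\]
obtained by pulling out the supremum from each summand of the $p'$-variation sum, together with the analogous inequality with exponents $p'/2,p/2$ for the second-level component. This reduces convergence in $p'$-variation to convergence in the uniform topology, provided the $p$-variation norms remain bounded in probability.

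First I would apply the Skorohod representation theorem to realize the sequence and its limit on a common probability space so that $(Z^n,\mathbb{Z}^n)\to(Z,\mathbb{Z})$ almost surely in the Skorohod (resp. uniform) topology. Tightness of $\|(Z^n,\mathbb{Z}^n)\|_{p,[0,T]}$ is preserved since it depends only on the law, and by lower semicontinuity of $p$-variation under uniform convergence one has $\|(Z,\mathbb{Z})\|_{p,[0,T]}<\infty$ almost surely.

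Next, for each $n$ I would pick a random $\lambda_n\in\Lambda_T$ attaining the uniform Skorohod distance up to error $1/n$ (taking $\lambda_n=\mathrm{id}$ in the uniform case), so that $|\lambda_n|$, $\|Z^n\circ\lambda_n-Z\|_{\infty,[0,T]}$ and $\|\mathbb{Z}^n\circ(\lambda_n,\lambda_n)-\mathbb{Z}\|_{\infty,[0,T]}$ tend to zero almost surely. Applying the interpolation inequality to the differences $Z^n\circ\lambda_n-Z$ and $\mathbb{Z}^n\circ(\lambda_n,\lambda_n)-\mathbb{Z}$, and using the invariance of $p$- and $p/2$-variation under strictly increasing bijective reparameterizations to bound the $p$-variation factor by $\|Z^n\|_{p,[0,T]}+\|Z\|_{p,[0,T]}$ (tight) and similarly for the $p/2$-variation factor, I conclude that
\[
\|Z^n\circ\lambda_n - Z\|_{p',[0,T]} + \|\mathbb{Z}^n\circ(\lambda_n,\lambda_n)-\mathbb{Z}\|_{p'/2,[0,T]} \;\longrightarrow\; 0
\]
in probability, hence $\sigma_{p',[0,T]}((Z^n,\mathbb{Z}^n),(Z,\mathbb{Z}))\to 0$ in probability, which gives convergence in distribution and transfers back to the original probability space.

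The only mildly delicate point is correctly tracking the reparameterizations $\lambda_n$ and their invariance on $p$-variation, so that the interpolation factor stays bounded in probability; the rest is a routine extraction of a uniform factor from the $p'$-variation sum.
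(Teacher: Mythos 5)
Your proof is correct and follows essentially the same interpolation approach the paper invokes (the paper simply refers to the proof of Theorem~6.1 in~\cite{friz2018differential}, describing it as ``a simple interpolation argument''). The key ingredients you identify — the elementary interpolation inequality pulling out a uniform factor, reparameterization-invariance of $p$-variation so that the $p$-variation of $Z^n\circ\lambda_n$ agrees with that of $Z^n$, the Skorohod representation to reduce to almost-sure convergence, and the tightness of the $p$-variation norms to keep the interpolation factor bounded in probability — are precisely what makes the argument go through. One small point worth spelling out slightly more is that $\|(Z,\mathbb{Z})\|_{p,[0,T]}<\infty$ a.s.\ uses lower semicontinuity \emph{together with} tightness (via a Fatou-type estimate on $\liminf_n\|(Z^n,\mathbb{Z}^n)\|_{p,[0,T]}$), but this is clear enough from what you write.
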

\begin{proof} This follows from a simple interpolation argument, see the proof of Theorem 6.1 in \cite{friz2018differential}.
\end{proof}

Invariance principles for rough path sequences guarantee the convergence of the solutions to rough differential equations where the noise is approximated by the path sequence. Moreover, whenever the second level (the first order `iterated integrals') of the rough path has a correction, the limiting path solves a drift-modified rough equation defined explicitly in terms of the correction. More precisely, \cite[Theorem 6.1 and Proposition 6.9]{friz2018differential} proved the following.
  {Let $(Z^n)$ be a sequence of semimartingales and assume that $(Z^n, \int_0^\cdot Z^n_{s-} \dd Z^n_s)$ converges in distribution in $p$-variation Skorohod (resp. uniform) distance to a rough path $(Z, \bb Z)$, where $Z$ is a semimartingale and $\mathbb{Z}_{s, t} = \int_s^t Z_{s, r -} \otimes
  \dd Z_r + \Gamma \times(t - s)$ for $\Gamma \in \mathbb{R}^{d \times d}$. Then the solutions $(Y^n)$ of
    \[ \dd Y_t^n = \sigma (Y^n_{t -}) \dd Z_t^n,\qquad Y^n_0 = y, \]
    converge in distribution in the Skorohod (resp. uniform) topology to the solution $Y$ of
    \[
    	\dd Y_t  = \sigma(Y_{t-}) \dd (Z, \bb Z)_t = \sigma(Y_{t-})\dd Z_t + \sum_{j,k,\ell} \partial_k \sigma_{\cdot j}(Y_t) \sigma_{k\ell}(Y_t) \Gamma_{j \ell} \dd t,\qquad Y_0 = y,
    \]
    where $\dd(Z,\bb Z)_t$ denotes rough path integration and $\dd Z_t$ is just the It\^o integral.}

\section{Additive functionals as rough paths}\label{sec:kipnis-varadhan}

Here we present our abstract convergence result for additive functionals of stationary Markov processes. We place ourselves in the context of Chapter~2 in {\cite{Komorowski2012}}: Let
$(X_t)_{t \geqslant 0}$ be a c{\`a}dl{\`a}g Markov process in a filtration
satisfying the usual conditions, with values in a Polish space $E$, and let
$\pi$ be a stationary probability measure for $X$ and $X_0 \sim \pi$. We assume that the
transition semigroup of $X$ can be extended to a strongly continuous
contraction semigroup $(T_t)_{t \geqslant 0}$ on $L^2 (\pi)$. We write
$\mathcal{L}$ for the infinitesimal generator of $(T_t)$ and we assume that
$\pi$ is ergodic for $\mathcal{L}$, i.e. that $F$ is $\pi$-almost surely
constant whenever $\pi (\{\mathcal{L}F = 0\}) = 1$. We also assume that there
exists a common core $\mathcal{C}$ for $\mathcal{L}$ and $\mathcal{L}^{\ast}$,
where $\mathcal{L}^{\ast}$ is the $L^2 (\pi)$-adjoint of $\mathcal{L}$, and
that $\mathcal{C}$ contains the constant functions. We write
\[
	\mc L_S = \frac12(\mc L + \mc L^\ast)\qquad \text{and} \qquad \mc L_A = \frac12 (\mc L - \mc L^\ast).
\]

\begin{notation}
  We write $\mathbb{P}$ or $\mathbb{P}_{\pi}$ (and $\mathbb{E}$ or
  $\mathbb{E}_{\pi}$) for the distribution of the stationary process $(X_t)_{t
  \geqslant 0}$ on the Skorohod space $D (\mathbb{R}_+, E)$. The notation
  $E_{\pi}$ is reserved for the integration with respect to $\pi$ on the space $E$.
\end{notation}

\begin{definition}
  The space $\mathcal{H}^1$ is defined as the completion of $\mathcal{C}$ with
  respect to the norm
  \[
  \| F \|_1^2 \assign E_{\pi} [F (-\mathcal{L}) F] = E_{\pi} [F (-\mathcal{L}_S) F],
  \]
  {or more precisely we identify $F, G \in \mc C$ if $\|F - G\|_1 = 0$, and $\mc H^1$ is the completion of the equivalence classes.}
  The space $\mathcal{H}^{- 1}$ is the dual of $\mathcal{H}^1$: We define for $F \in \mc C$
  \[
  \| F \|_{- 1}^2 \assign \sup_{\substack{ G \in \mathcal{C}:\\ \| G \|_1 \leqslant 1}} E_{\pi} [F G] = \sup_{G \in \mathcal{C}} \{ 2 E_{\pi} [F G]
     - \| G \|_1^2 \} \]
  and then $\mathcal{H}^{- 1}$ is the completion of $\{ F \in \mathcal{C}: \|
  F \|_{- 1} < \infty \}$ with respect to $\| \cdot \|_{- 1}$.

  {If $F$ takes values in $\bb R^d$ we also write $F \in \mc H^1(\bb R^d)$, $F \in \mc H^{-1}(\bb R^d)$ or $F \in L^2(\pi, \bb R^d)$, etc.}
\end{definition}

Note that if $E_{\pi} [F] \neq 0$ then we can take $G = \lambda \in
\mathcal{C}$ for $\lambda \in \mathbb{R}$ so that $\| G \|_1 = 0$ and by
sending $\lambda \rightarrow \pm \infty$ we see that $\| F \|_{- 1} = \infty$.
Therefore, we get that $E_{\pi} [F] = 0$ for all $F \in L^2 (\pi) \cap
\mathcal{H}^{- 1}$.

Let now $F \in \mathcal{H}^{- 1} (\mathbb{R}^d) \cap L^2 (\pi, \mathbb{R}^d)$.
Our aim is to derive a scaling limit for the (absolutely continuous) rough path $(Z^n,
\mathbb{Z}^n)$, where
\[ Z^n_{s, t} \assign \frac{1}{\sqrt{n}} \int_{n s}^{n t} F (X_r) \dd r,
   \qquad \mathbb{Z}^n_{s, t} \assign \int_s^t Z^n_{s, r} \otimes \dd Z^n_r,
   \]
and the integration with respect to $Z^n_r$ is in the Riemann-Stieltjes sense.
Let us first recall the following result:

\begin{lemma}[\cite{Komorowski2012}, Theorem~2.33]
  \label{lem:uniform-tightness-for-path}

  Assume that $\pi$ is ergodic for $\mathcal{L}^{\ast}$. Let $F \in L^2 (\pi,
  \mathbb{R}^d) \cap \mathcal{H}^{- 1} (\mathbb{R}^d)$ and assume that the
  solution $\Phi_\lambda$ to the resolvent equation $(\lambda -\mathcal{L}) \Phi_{\lambda} = F$ with $\lambda > 0$ satisfies
  \begin{equation}
    \label{eq:additive-rp-assumption-1} \lim_{\lambda \rightarrow 0} \left(
    \sqrt{\lambda} \| \Phi_{\lambda} \|_{\pi} + \| \Phi_{\lambda} - \Phi \|_1
    \right) = 0
  \end{equation}
  for some $\Phi \in \mathcal{H}^1(\bb R^d)$. Then $(Z^n)_n$ converges in distribution
  in $C ([0,T], \mathbb{R}^d)$ to a Brownian motion with covariance
  matrix
  \[ \langle B, B \rangle_t = 2 t \langle \Phi, \otimes \Phi \rangle_1 \assign
     2 t (\langle \Phi^{(k)}, \Phi^{(\ell)} \rangle_1)_{1 \leqslant k, \ell
     \leqslant d} = 2 t \lim_{\lambda \rightarrow 0} (\langle
     \Phi_{\lambda}^{(k)}, \Phi_{\lambda}^{(\ell)} \rangle_1)_{1 \leqslant k,
     \ell \leqslant d} . \]
\end{lemma}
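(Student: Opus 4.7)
The plan is to follow the Kipnis-Varadhan martingale method: reduce the rescaled additive functional to a stationary martingale up to negligible remainders, and then apply a functional central limit theorem for square-integrable ergodic martingales.

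By Dynkin's formula, for each $\lambda > 0$,
\[
M^\lambda_t \assign \Phi_\lambda(X_t) - \Phi_\lambda(X_0) - \int_0^t \cL \Phi_\lambda(X_r) \dd r
\]
is a square-integrable $\bb R^d$-valued $\bb P_\pi$-martingale with stationary increments, and a standard computation yields $\bb E_\pi[M^\lambda_t \otimes M^\lambda_t] = 2t (\langle \Phi_\lambda^{(k)}, \Phi_\lambda^{(\ell)}\rangle_1)_{k, \ell}$. The resolvent equation $(\lambda - \cL)\Phi_\lambda = F$ gives the decomposition
\[
\int_0^{t} F(X_r) \dd r = -M^\lambda_t + \Phi_\lambda(X_t) - \Phi_\lambda(X_0) + \lambda \int_0^{t} \Phi_\lambda(X_r) \dd r.
\]
Setting $\lambda_n = 1/n$ and applying this on $[0, nt]$ with a factor $1/\sqrt n$, the $L^2(\bb P_\pi)$-norm of the boundary piece is bounded by $2\sqrt{\lambda_n}\|\Phi_{\lambda_n}\|_\pi$ and that of the bulk term by $t\sqrt{\lambda_n}\|\Phi_{\lambda_n}\|_\pi$; both vanish by assumption~\eqref{eq:additive-rp-assumption-1}.

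It remains to show that $\widetilde M^n_\cdot \assign \tfrac{1}{\sqrt n} M^{\lambda_n}_{n\cdot}$ converges in distribution in $C([0,T], \bb R^d)$ to a Brownian motion with covariance $2t \langle \Phi, \otimes \Phi\rangle_1$. The predictable quadratic covariation of $\widetilde M^n$ at time $t$ is a time-average of a stationary functional of $X$ whose expectation equals $2t \langle \Phi_{\lambda_n}, \otimes \Phi_{\lambda_n}\rangle_1$; Birkhoff's ergodic theorem (using ergodicity of $\pi$ under $\cL^\ast$) combined with $\|\Phi_{\lambda_n}-\Phi\|_1 \to 0$ identifies its limit as $2t \langle \Phi, \otimes \Phi\rangle_1$. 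A Lindeberg condition, which is automatic in the stationary $L^2$ regime, then allows us to invoke the functional martingale CLT to obtain Brownian convergence with the desired covariance, while tightness in $C([0,T],\bb R^d)$ follows from Doob's maximal inequality applied to $M^{\lambda_n}$.

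The main obstacle is the coupling between the regularization parameter $\lambda$ and the diffusive scale $n$. Since $\Phi$ need not lie in the domain of $\cL$, one cannot simply pass $\lambda \to 0$ inside Dynkin's formula to define a candidate limiting martingale; instead one must work with the diagonal sequence $\lambda_n = 1/n$ and exploit the Cauchy property of $(\Phi_\lambda)_\lambda$ in $\mc H^1$. This clarifies the dual role of assumption~\eqref{eq:additive-rp-assumption-1}: the piece $\sqrt{\lambda}\|\Phi_\lambda\|_\pi \to 0$ controls the remainder terms in the martingale decomposition, while $\|\Phi_\lambda - \Phi\|_1 \to 0$ identifies the limiting covariance.
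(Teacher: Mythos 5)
The paper does not prove this lemma itself — it cites Theorem~2.33 of \cite{Komorowski2012} — so I will assess your sketch against the argument in that reference, which is the Kipnis--Varadhan method.

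Your decomposition and the identification of the limiting covariance are in the right spirit, but there is a genuine gap in the tightness step, and it is precisely the gap that the forward--backward martingale decomposition in \cite{Komorowski2012} is designed to close. You claim that the boundary piece $n^{-1/2}(\Phi_{\lambda_n}(X_{nt})-\Phi_{\lambda_n}(X_0))$ vanishes because its $L^2$-norm is bounded by $2\sqrt{\lambda_n}\,\|\Phi_{\lambda_n}\|_\pi$. This is a \emph{fixed-$t$} estimate: by stationarity $\mathbb{E}[|\Phi_{\lambda_n}(X_{nt})|^2]=\|\Phi_{\lambda_n}\|_\pi^2$ for each fixed $t$, but nothing in the hypotheses controls $\mathbb{E}[\sup_{t\le T}|\Phi_{\lambda_n}(X_{nt})|^2]$, and the sup runs over $t$ in a macroscopic interval, i.e.\ over $s\in[0,nT]$ in microscopic time. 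Doob's inequality handles $M^{\lambda_n,n}$ but says nothing about this remainder, because $\Phi_{\lambda_n}(X_\cdot)$ is not a martingale. The standard fix is exactly what Remark~3.5 and Lemma~\ref{lem:fb-decomp} of the present paper describe: one establishes $C$-tightness of $Z^n$ directly via the forward--backward martingale representation $\int_0^t \mathcal L_S\Psi(X_s)\,\dd s = \tfrac12(M^\Psi_t + \hat M^\Psi_T - \hat M^\Psi_{T-t})$, so that the entire additive functional is written in terms of two martingales whose sup can be controlled; this is where ergodicity of $\pi$ under $\mathcal L^\ast$ is actually used (the backward process has generator $\mathcal L^\ast$). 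Once $Z^n$ and $M^{\lambda_n,n}$ are both $C$-tight, $R^{\lambda_n,n}=Z^n-M^{\lambda_n,n}$ is $C$-tight, and fixed-$t$ convergence $R^{\lambda_n,n}_t\to0$ in probability upgrades to convergence in $C([0,T])$. Your sketch omits the forward--backward step, and without it the proof does not close.

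Two smaller points. First, a sign slip: with $M^\lambda_t=\Phi_\lambda(X_t)-\Phi_\lambda(X_0)-\int_0^t\mathcal L\Phi_\lambda(X_r)\,\dd r$ and $(\lambda-\mathcal L)\Phi_\lambda=F$, the correct identity is
\[
\int_0^t F(X_r)\,\dd r = M^\lambda_t + \Phi_\lambda(X_0)-\Phi_\lambda(X_t) + \lambda\int_0^t\Phi_\lambda(X_r)\,\dd r,
\]
i.e.\ the signs on the martingale and boundary terms are the opposite of what you wrote. This does not affect the magnitudes, but it should be fixed. Second, your attribution of ergodicity is misplaced: the Birkhoff step identifying the predictable quadratic variation of the \emph{forward} martingale uses ergodicity of $\pi$ for $\mathcal L$ (i.e.\ of the forward process); ergodicity for $\mathcal L^\ast$ is needed only for the backward martingale in the tightness argument you skipped.
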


Our aim is to extend Lemma~\ref{lem:uniform-tightness-for-path} to the rough
path topology.
Our main result is:

\begin{theorem}\label{thm:additive-rp}
Let $p > 2$. Under the assumptions of
  Lemma~\ref{lem:uniform-tightness-for-path} the process $(Z^n, \mathbb{Z}^n)$
  converges weakly to
  \begin{equation}\label{eq:limit of af}
    \left( B_t, {\int_0^t B_s \otimes \circ \dd B_s + \Gamma t} \right)_{t \geqslant 0}
  \end{equation}
  in the (uniform) $p$-variation topology on $C_{p\text{-}\mathrm{var}}([0,T], \bbR^d\times \bbR^{d \times d})$, where $B$ is the same Brownian motion as in
  Lemma~\ref{lem:uniform-tightness-for-path}, {
  \[
  	\int_0^t B_s \otimes \circ \dd B_s := \int_0^t B_s \otimes \dd B_s + \frac12 \langle B, B\rangle_t
  \]
  }
  denotes Stratonovich integration, and {$\Gamma$ is given by the following limit, which exists:}
  \[ \Gamma = \lim_{\lambda \rightarrow 0} E_{\pi} [\Phi_{\lambda} \otimes
     \mathcal{L}_A \Phi_{\lambda}].
  \]
\end{theorem}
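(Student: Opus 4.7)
The plan is to combine the Kipnis-Varadhan martingale approximation with integration by parts at the iterated-integral level. Fix $\lambda>0$ and let $\Phi_\lambda$ solve the resolvent equation $(\lambda-\mathcal{L})\Phi_\lambda=F$. Dynkin's formula applied to $\Phi_\lambda$ furnishes the decomposition
\[
    Z^n_t \;=\; W^{n,\lambda}_t \;+\; L^{n,\lambda}_t \;-\; \tilde\Phi^{n,\lambda}_t,
\]
where $W^{n,\lambda}_t:=n^{-1/2}M^\lambda_{nt}$ is the rescaled Dynkin martingale, $L^{n,\lambda}_t:=(\lambda/\sqrt n)\int_0^{nt}\Phi_\lambda(X_s)\,ds$ is a small drift term, and $\tilde\Phi^{n,\lambda}_t:=(\Phi_\lambda(X_{nt})-\Phi_\lambda(X_0))/\sqrt n$ is a coboundary. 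Under the hypothesis \eqref{eq:additive-rp-assumption-1}, $L^{n,\lambda}$ and $\tilde\Phi^{n,\lambda}$ vanish in $L^2$ in the joint limit $n\to\infty$, $\lambda\to 0$, whereas $W^{n,\lambda}$ satisfies a martingale CLT with limit Brownian motion $B^\lambda$ of covariance $2t\langle\Phi_\lambda,\otimes\Phi_\lambda\rangle_1$, and $B^\lambda\to B$ as $\lambda\to 0$.

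The first step is to lift the martingale convergence to the rough-path level. Combining the Kurtz-Protter UCV stability theorem with the martingale functional CLT yields convergence of $\bigl(W^{n,\lambda},\int W^{n,\lambda}_{s-}\otimes dW^{n,\lambda}_s\bigr)$ to $\bigl(B^\lambda,\int B^\lambda_s\otimes dB^\lambda_s\bigr)$ (It\^o) in distribution in the uniform topology. Tightness of the $p$-variation norms follows from L\'epingle's BDG inequality at the first level and from the rough path BDG for martingale rough paths of \cite{friz2018differential} at the second level; Lemma~\ref{lem:rp-conv} then upgrades uniform convergence to $p'$-variation convergence for all $p'>p>2$.

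Since $Z^n$ is absolutely continuous, $\mathbb{Z}^n_{s,t}=\int_s^t Z^n_{s,r}\otimes dZ^n_r$ is a classical Riemann integral, into whose integrand I substitute the Dynkin decomposition to obtain three pieces. For the main piece, integration by parts against $W^{n,\lambda}$ (noting $[W^{n,\lambda},Z^n]=0$, since $Z^n$ is continuous of finite variation) together with step one shows that $\int_s^t W^{n,\lambda}_{s,r-}\otimes dZ^n_r$ converges in distribution to $\int_s^t B_{s,r}\otimes dB_r+\langle B,B\rangle_{s,t}$. The coboundary piece unfolds as
\[
-\int_s^t \tilde\Phi^{n,\lambda}_{s,r}\otimes dZ^n_r \;=\; -\frac1n\int_{ns}^{nt}\Phi_\lambda(X_u)\otimes F(X_u)\,du \;+\; \Phi_\lambda(X_{ns})\otimes Z^n_{s,t}/\sqrt n,
\]
and the ergodic theorem applied to the first summand produces the deterministic limit $-(t-s)E_\pi[\Phi_\lambda\otimes F]$. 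Using $F=\lambda\Phi_\lambda-\mathcal{L}\Phi_\lambda$, the splitting $\mathcal{L}=\mathcal{L}_S+\mathcal{L}_A$, the identity $E_\pi[\Phi\otimes\mathcal{L}_S\Phi]=-\langle\Phi,\otimes\Phi\rangle_1$, and the hypothesis $\sqrt\lambda\|\Phi_\lambda\|_\pi\to 0$, this limit decomposes as $\lambda\to 0$ into a symmetric contribution $-\tfrac12\langle B,B\rangle_{s,t}$ and an antisymmetric contribution $(t-s)\Gamma$. The $L^{n,\lambda}$ piece is negligible under the KV hypothesis. Summing the three contributions gives $\int_s^t B_{s,r}\otimes\circ dB_r+\Gamma(t-s)$, which identifies the limit.

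The hardest step will be to upgrade all of the above convergences from uniform to $p$-variation topology, in particular for the cross integrals and the remainder contributions involving $L^{n,\lambda}$ and the boundary piece of $\tilde\Phi^{n,\lambda}$. A crucial structural identity is that the martingale part of $\tilde\Phi^{n,\lambda}$ is $W^{n,\lambda}$, so $[\tilde\Phi^{n,\lambda},W^{n,\lambda}]=[W^{n,\lambda},W^{n,\lambda}]$; integration by parts then rewrites the problematic cross terms as stochastic integrals with non-martingale integrand against a martingale integrator, whose $p$-variation is controlled by the new Proposition~\ref{prop:area-variation}. A diagonal argument jointly sending $n\to\infty$ and $\lambda\to 0$ via the quantitative bounds $\sqrt\lambda\|\Phi_\lambda\|_\pi+\|\Phi_\lambda-\Phi\|_1\to 0$ then completes the proof.
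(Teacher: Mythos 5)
Your outline for \emph{identifying} the limit is essentially the same as the paper's (Corollary~\ref{cor:additive-rp}): decompose $Z^n$ via the resolvent equation, use Kurtz--Protter/UCV stability to treat $\int M^{\lambda,n}\otimes dZ^n$ through integration by parts, apply the ergodic theorem to the coboundary piece $-\frac1n\int_{ns}^{nt}\Phi_\lambda\otimes F\,du$, and finally split $E_\pi[\Phi_\lambda\otimes F]\to E_\pi[\Phi\otimes(-\mathcal L)\Phi]$ into symmetric and antisymmetric parts. That part of your proposal is correct.

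The gap is in the tightness, exactly where you flag it as the hardest step, and it is not merely an issue of ``upgrading'' — your plan as stated cannot produce the needed bound. You intend to control $\|\mathbb Z^n\|_{p/2,[0,T]}$ by integration by parts plus Proposition~\ref{prop:area-variation}, arguing that the cross terms can be rewritten ``as stochastic integrals with non-martingale integrand against a martingale integrator.'' But the only martingale available in the resolvent decomposition is $W^{n,\lambda}=M^{\lambda,n}$, and the remainder $R^{n,\lambda}=Z^n-W^{n,\lambda}$ is itself a semimartingale whose \emph{martingale part is $-W^{n,\lambda}$ and whose finite-variation part is $Z^n$}. Consequently, decomposing the integrator $dZ^n$ along this split is circular (you recover $dZ^n=dW^{n,\lambda}-dW^{n,\lambda}+dZ^n$), and the term $\int R^{n,\lambda}\otimes dR^{n,\lambda}$ — more precisely the L\'evy area of $R^{n,\lambda}$ — has no martingale integrator and no uniform-in-$n$ bound from Proposition~\ref{prop:area-variation}. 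The same issue blocks a uniform-in-$n$ control of pieces like $\int L^{n,\lambda}\otimes dZ^n$. The identity $[\tilde\Phi^{n,\lambda},W^{n,\lambda}]=[W^{n,\lambda},W^{n,\lambda}]$ is true but only helps compute brackets for the UCV corollary; it does not supply a second martingale.

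What is actually needed — and what the paper supplies via Lemma~\ref{lem:fb-decomp} — is the Lyons--Zheng/forward--backward decomposition: after approximating $F$ by $\mathcal L_S\Psi$ (with $\Psi=\Phi^{H}_\lambda$ solving the \emph{symmetric} resolvent equation), one writes
\[
\int_0^t \mathcal L_S\Psi(X_s)\,ds = \tfrac12\bigl(M^\Psi_t + \hat M^\Psi_T - \hat M^\Psi_{T-t}\bigr),
\]
so that the finite-variation ``integrator'' in $\mathbb Z^n$ becomes a sum of a forward-martingale differential and a backward-martingale differential. Proposition~\ref{prop:area-variation} can then be applied twice, once in each filtration, and the boundary/error terms are estimated directly; this gives the uniform-in-$n$ bound $\mathbb E[\|\mathbb Z^n\|_{p/2,[0,T]}^{1-\varepsilon}]\lesssim (1+T^{1/2}\|F\|_{-1})^2$ of Corollary~\ref{cor:p-var-iterated-kv}. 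Without introducing the backward martingale, your diagonal argument in $(n,\lambda)$ has no way to bound the L\'evy area of the remainder, and the $p$-variation tightness does not close. (The same device is also what the paper uses to establish the prerequisite $C$-tightness of $\mathbb Z^n_{0,\cdot}$ in Lemma~\ref{lem:uniform-tightness}, another step your proposal leaves unaddressed.)
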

For the rest of the section we shall assume without further mention that the conditions of Theorem~\ref{thm:additive-rp} are satisfied.

\begin{remark}
  As $Z^n$ is of finite variation the iterated integral $\int_0^t
  Z^n_s \otimes \dd Z^n_s$ ``wants'' to converge to the Stratonovich
  integral, and $\Gamma$ describes the area correction. Note that $\Gamma = 0$ if $\mathcal{L}$ is symmetric, i.e. if $X$
  is reversible, so in that case we indeed obtain the Stratonovich rough path
  over $B$.
\end{remark}

\begin{remark}
  In Lemma~\ref{lem:uniform-tightness-for-path} and
  Theorem~\ref{thm:additive-rp} the ergodicity of $\pi$ with respect to
  $\mathcal{L}^{\ast}$ is only needed for proving the tightness of $(Z^n,
  \mathbb{Z}^n)$ in the uniform topology. This is relatively subtle because we need tightness of certain martingales $M^\Psi$ for which we only know that $\mathbb{E} [\langle M^\Psi
  \rangle_t - \langle M^\Psi \rangle_s] \lesssim |t - s|$, which is insufficient to apply Kolmogorov's continuity criterion. If we can show
  $\mathbb{E} [| \langle M^\Psi \rangle_t - \langle M^\Psi \rangle_s |^{1 + \delta}]
  \lesssim |t - s|^{1 + \delta}$ for some $\delta > 0$ and for the
  martingales $M^\Psi$ of Lemma~\ref{lem:fb-decomp} below, then we do not need the
  ergodicity of $\pi$ with respect to $\mathcal{L}^{\ast}$ (although we do
  need ergodicity with respect to $\mathcal{L}$).
\end{remark}

The strategy for proving Theorem \ref{thm:additive-rp} is to apply Lemma~\ref{lem:rp-conv}, which
separates the convergence proof into two problems: \ Showing tightness of $
(Z^n, \mathbb{Z}^n)$ in the $p$-variation topology, and identifying
the limit in the Skorohod topology.
To identify the limit we follow a similar strategy as in \cite{Komorowski2012} and combine it with tools from rough paths together with a simple integration by parts formula.

{Let us formally sketch how the correction $\Gamma$ arises, under the assumption that we can solve the Poisson equation $-\mc L \Phi = F$ (which is for example the case if $X$ has a spectral gap and $E_\pi[F] = 0$). In that case we have
\[
	Z^n_t = \frac{1}{\sqrt{n}} \Phi(X_0) - \frac{1}{\sqrt{n}} \Phi(X_{n t}) + M^{\Phi,n}_t
\]
for a sequence of martingales $(M^{n})$. Therefore,
\[
	\int_0^t Z^n_s \otimes \dd Z^n_s =  \int_0^t (\Phi(X_0) - \Phi(X_{n s})) \otimes F(X_{ns}) \dd s + \int_0^t M^n_s \otimes \dd Z^n_s.
\]
By the ergodic theorem, the first term on the right hand side converges to
\[
	t ( \Phi(X_0)  \otimes E_\pi[F] - E_\pi[\Phi \otimes F ] ) = t E_\pi [ \Phi \otimes \mc L \Phi].
\]
To understand the contribution of the remaining contribution we use integration by parts: Since $Z^n$ is of finite variation, we have
\[
	\int_0^t M^n_s \otimes \dd Z^n_s = M^n_t \otimes Z^n_t - \int_0^t Z^n_s \otimes \dd M^n_s.
\]
Since $X$ is stationary, we have $Z^n = M^n + O(n^{-1/2})$, and $(Z^n, M^n)$ converges jointly to $(B,B)$. And the martingale sequence $(M^n)$ satisfies the ``UCV condition'' (see the next section for details), and therefore $\int_0^t Z^n_s \otimes \dd M^n_s \to \int_0^t B_s \otimes \dd B_s$. After passing to the limit we apply integration by parts once more and deduce that
\[
	\int_0^t M^n_s \otimes \dd Z^n_s \to  B_t \otimes B_t - \int_0^t B_s \otimes \dd B_s = \int_0^t B_s \otimes \dd B_s + \langle B, B\rangle_t.
\]
So overall
\begin{align*}
	\int_0^t Z^n_s \otimes \dd Z^n_s  & \to \int_0^t B_s \otimes \circ \dd B_s + \frac12 \langle B, B \rangle_t + t E_\pi [ \Phi \otimes \mc L \Phi]\\
	& = \int_0^t B_s \otimes \circ \dd B_s + t E_\pi[\Phi \otimes (-\mc L_S) \Phi] + t E_\pi [ \Phi \otimes \mc L \Phi] \\
	& = \int_0^t B_s \otimes \circ \dd B_s + t E_\pi[\Phi \otimes \mc L_A \Phi]
\end{align*}
}

\subsection{Tightness in $p$-variation}

The following notion was introduced by Kurtz-Protter \cite{Kurtz1991}.

{
\begin{definition}[UCV condition]\label{def:UCV}
Let $(X^n)_{n\geqslant 1}\subset D([0,T],\bbR)$ 
be a sequence of c\`adl\`ag local martingales. We say that $(X^n)_{n\geqslant 0}$ satisfies the
\emph{Uniformly Controlled Variation} (UCV) condition if
\[
\sup_{n}\bbE
\left[ [X^{n}]_{T} \right]  < \infty.
\]
\end{definition}
Strictly speaking this is a very particular special case of the definition by Kurtz and Protter, who are much more permissive and consider general semimartingales rather than local martingales, and they allow for localization with stopping times as well as truncation of large jumps. But here we only need the special case above.
}

The celebrated result of Kurtz-Protter \cite{Kurtz1991} guarantees the convergence in the Skorokhod topology of the stochastic integrals of a sequence of c\`adl\`ag local martingales satisfying the UCV condition.
{Before we state it, recall that a sequence of processes $(Y^n)_{n \in \mathbb{N}}$ in $D (\mathbb{R}_+,
\mathbb{R}^d)$ is called \emph{C-tight} if it is tight in the Skorohod
topology and all limit points are continuous processes.}

\begin{theorem}[\cite{Kurtz1991}, Theorem 2.2]\label{lem:UCV}
Let $(X^n, Y^n)_{n\geqslant 1}\subset D([0,T],\bbR^2)$ be converging in probability in the Skorokhod topology
(or jointly in distribution) to a pair $(X,Y)\in D([0,T],\bbR^2)$. Suppose that
$(X^n)_{n\geqslant 1}$ is a sequence of local martingales which satisfies the UCV condition.
Then,
$ (X^n,Y^n,\int_0^\cdot Y^n_{s-}  \dd X^n_s)$
converges to
$(X,Y,\int_0^\cdot Y_{s-}  \dd X_s)$
as $n\to\infty$ in probability (or weakly) in $D([0,T],\bbR^3)$.
In particular, if in addition $\int_0^\cdot Y_{s-}  \dd X_s\in C([0,T],\bbR)$,
then $\int_0^\cdot Y^n_{s-}  \dd X^n_s$ is $C$-tight.
\end{theorem}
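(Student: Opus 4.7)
My plan is to approximate the integrand $Y^n$ by a simple step process, invoke the continuous mapping theorem for the resulting Riemann sum, and then use the UCV condition together with Burkholder–Davis–Gundy to make the approximation error small, uniformly in $n$. First, by a standard localization with stopping times $\tau_R^n := \inf\{t : |X^n_t| \vee [X^n]_t \geq R\}$ and the UCV bound $\sup_n \mathbb E[[X^n]_T] < \infty$, I reduce to the case where the $X^n$ are square-integrable martingales with $\sup_n \mathbb E[\sup_{t\le T}|X^n_t|^2] < \infty$ (Doob) and $\sup_n \mathbb E[[X^n]_T]<\infty$. In the in-probability setting the convergence is already almost sure up to a subsequence; in the weak setting, an application of Skorohod's representation theorem lets me assume $(X^n,Y^n) \to (X,Y)$ almost surely in the $J_1$ topology. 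A small but important sanity check is that the uniform $L^2$ bound forces the limit $X$ to be a local martingale in the filtration generated by $(X,Y)$, so that the It\^o integral $\int_0^\cdot Y_{s-}\,\dd X_s$ is well-defined.

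For each $\delta>0$ I pick a partition $0=t_0<\dots<t_N=T$ of mesh $\delta$, chosen to avoid the at-most-countable set of fixed discontinuities of the limit $(X,Y)$, and define the step process
\[
Y^{n,\delta}_s := \sum_{k=0}^{N-1} Y^n_{t_k}\,\mathbbm{1}_{[t_k, t_{k+1})}(s),
\qquad
Y^{\delta}_s := \sum_{k=0}^{N-1} Y_{t_k}\,\mathbbm{1}_{[t_k, t_{k+1})}(s).
\]
Because the partition points are continuity points of the limit, $(X^n,Y^n, Y^{n,\delta}) \to (X,Y, Y^\delta)$ in the Skorohod topology (in probability, resp. in law), and the corresponding step integral is just the telescoping sum $\sum_{k}Y^n_{t_k}(X^n_{t_{k+1}\wedge\cdot} - X^n_{t_k\wedge\cdot})$; so the continuous mapping theorem gives
\[
\Bigl(X^n, Y^n, \int_0^\cdot Y^{n,\delta}_{s-}\,\dd X^n_s\Bigr) \;\longrightarrow\;\Bigl(X, Y, \int_0^\cdot Y^{\delta}_{s-}\,\dd X_s\Bigr)
\]
in $D([0,T],\mathbb{R}^3)$ for each fixed $\delta$.

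To pass from step integrals to the true integral, I control the error by Doob and BDG:
\[
\mathbb E\!\left[\sup_{t\le T}\Bigl|\int_0^t (Y^n_{s-} - Y^{n,\delta}_{s-})\,\dd X^n_s\Bigr|^2\right]
\;\le\; 4\,\mathbb E\!\left[\int_0^T (Y^n_{s-} - Y^{n,\delta}_{s-})^2\,\dd [X^n]_s\right].
\]
The UCV condition gives $\sup_n \mathbb E[[X^n]_T] < \infty$; combined with the fact that a $J_1$-convergent sequence in $D([0,T],\mathbb{R})$ is uniformly equicontinuous off any neighborhood of the jumps of the limit $Y$, a standard tightness/truncation argument shows that for every $\varepsilon>0$ there is $\delta_0 = \delta_0(\varepsilon)$ such that the right-hand side is $\le \varepsilon$ for all $\delta \le \delta_0$ and all $n$. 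The same argument applied to $Y$ (against a limiting control measure dominating $[X]$, which is finite because $X$ is a local $L^2$ martingale) controls $\int_0^\cdot (Y_{s-} - Y^\delta_{s-})\,\dd X_s$ as $\delta\to 0$. A $3\varepsilon$-argument now combines these three bounds to yield joint Skorohod convergence $(X^n,Y^n,\int Y^n_{-}\dd X^n) \to (X,Y,\int Y_-\,\dd X)$. The $C$-tightness statement follows because if the limit $\int_0^\cdot Y_{s-}\dd X_s$ is continuous, the same $3\varepsilon$-argument gives vanishing maximal jump size in the limit.

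\textbf{Main obstacle.} The delicate step is the uniform $L^2(\dd[X^n])$-estimate of $Y^n - Y^{n,\delta}$: the BDG bound is formulated at the level of the integrator's time, whereas $J_1$-convergence only provides time-reparametrizations $\lambda_n\to \mathrm{id}$ with $Y^n\circ \lambda_n \to Y$ uniformly. One must therefore argue that the mass that $[X^n]$ puts near the (at-most-countable) jump set of $Y$ becomes uniformly small as one shrinks a neighborhood, which uses both the UCV bound and a careful choice of partition points avoiding the common fixed discontinuities of the limit. The handling of common jumps of $X$ and $Y$, where the integral has a genuine jump $Y_{t-}\Delta X_t$ that must be correctly reproduced by the discrete sum, is the technical heart of the argument.
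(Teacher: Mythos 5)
The paper does not prove this statement at all --- it is quoted verbatim from Kurtz--Protter \cite{Kurtz1991} --- so there is no internal proof to compare against; what you have written is a reconstruction of the cited argument, and it has a genuine gap exactly at the step you yourself flag as the ``main obstacle.''

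The gap is the claimed uniform estimate
$\sup_n \mathbb{E}\bigl[\int_0^T (Y^n_{s-}-Y^{n,\delta}_{s-})^2\,\dd[X^n]_s\bigr]\to 0$ as $\delta\to 0$
for a \emph{deterministic} partition. This is false under the stated hypotheses. Take $Y^n=Y=\mathds{1}_{[u,T]}$ deterministic and let $X^n$ be zero up to time $u+1/n$ and then make a single jump $\xi_n$ with $\xi_n=\pm a_n$ each with probability $p_n/2$ and $\xi_n=0$ otherwise, where $a_n^2p_n=1$ and $p_n\to 0$. Then $X^n$ is a martingale, $\mathbb{E}[[X^n]_T]=1$ so UCV holds, and $(X^n,Y)\to(0,Y)$ in probability uniformly, so all hypotheses of the theorem are met (and its conclusion holds, since $\int_0^\cdot Y_{s-}\,\dd X^n_s = Y_u\,\xi_n\mathds{1}_{[u+1/n,T]}\to 0$ in probability). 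But for any fixed partition of mesh $\delta$ and all large $n$, the interval containing $u$ also contains $u+1/n$, the integrand $(Y_{(u+1/n)-}-Y_{t_k})^2$ is of order $1$ there, and $[X^n]$ places expected mass $a_n^2p_n=1$ at $u+1/n$; hence the error expectation stays bounded below by a constant for every $\delta$. The mass that $[X^n]$ puts just after a jump of $Y$ does \emph{not} become uniformly small: it can be carried by rare large jumps that vanish in probability but not in $L^2$, and neither $J_1$-convergence of $(X^n,Y^n)$ nor $\sup_n\mathbb{E}[[X^n]_T]<\infty$ rules this out. So the ``standard tightness/truncation argument'' you invoke does not exist in the form you need.

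The repair is the device actually used by Kurtz and Protter: discretize $Y^n$ along its \emph{own oscillation times}, $\tau^n_0=0$, $\tau^n_{k+1}=\inf\{t>\tau^n_k: |Y^n_t-Y^n_{\tau^n_k}|\geqslant\delta\}$, and set $Y^{n,\delta}_s=Y^n_{\tau^n_k}$ on $(\tau^n_k,\tau^n_{k+1}]$. Then $|Y^n_{s-}-Y^{n,\delta}_s|\leqslant\delta$ for \emph{every} $s$, so Doob/BDG gives the error bound $4\delta^2\sup_n\mathbb{E}[[X^n]_T]$ with no need to control where $[X^n]$ sits. The price, which must then be paid, is showing that the random partition times and the associated step integrals converge jointly with $(X^n,Y^n)$; this is where the continuity-point bookkeeping and the treatment of common jumps enter. (This same stopping-time discretization of the integrand is, incidentally, the one used in the paper's proof of Proposition~\ref{prop:area-variation-precise}.) The rest of your outline --- localization, Skorohod representation, continuous mapping for step integrals, the $3\varepsilon$ conclusion, and the $C$-tightness remark --- is sound.
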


\begin{corollary}\label{cor:UCV}
Let $(X^n, Y^n)_{n\geqslant 1}\subset D([0,T],\bbR^2)$ satisfy the same assumptions as in Theorem~\ref{lem:UCV}.
If in addition $(Y_n)_{n\ge1}$ is a sequence of semimartingales and $(X^n, Y^n, [X^n,Y^n])$ converges to $(X,Y,A)$ in probability (or jointly in distribution), where $Y$ is a semimartingale and $A$ is an adapted c\`adl\`ag process of finite variation,
then
\[
	\left(X^n, Y^n, \int_0^\cdot X^n_{s-} \otimes \dd Y^n_s,  \int_0^\cdot Y^n_{s-} \otimes \dd X^n_s\right) \longrightarrow \left(X, Y, \int_0^\cdot X_{s-} \otimes \dd Y_s + [X,Y] - A,  \int_0^\cdot Y_{s-} \otimes \dd X_s \right)
\]
in probability (or weakly) in $D([0,T],\bbR^4)$.
In particular, if in addition $\int_0^\cdot X_{s-} \otimes \dd Y_s\in C([0,T],\bbR)$, then $\int_0^\cdot X^n_{s-} \otimes \dd Y^n_s$ is $C$-tight.
\end{corollary}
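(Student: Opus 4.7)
The plan is to reduce to the situation already covered by Theorem~\ref{lem:UCV} via semimartingale integration by parts, which for each $n$ gives
\[
\int_0^t X^n_{s-}\otimes \dd Y^n_s = X^n_t\otimes Y^n_t - X^n_0\otimes Y^n_0 - \int_0^t Y^n_{s-}\otimes \dd X^n_s - [X^n, Y^n]_t.
\]
It therefore suffices to obtain the joint Skorohod limit of the four processes on the right-hand side and then rewrite.

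First I would invoke Theorem~\ref{lem:UCV} in its natural vector-valued formulation, applied to the local martingale $X^n$ with the $\bbR^2$-valued integrand $(Y^n, [X^n, Y^n])$. The joint Skorohod convergence $(X^n, Y^n, [X^n, Y^n]) \to (X, Y, A)$ is exactly the extra hypothesis of the corollary, and the UCV condition is unaffected since it only constrains $X^n$. This yields
\[
\Bigl(X^n,\,Y^n,\,[X^n,Y^n],\,\int_0^\cdot Y^n_{s-}\otimes \dd X^n_s\Bigr) \longrightarrow \Bigl(X,\,Y,\,A,\,\int_0^\cdot Y_{s-}\otimes \dd X_s\Bigr)
\]
jointly in $D([0,T], \bbR^4)$, in probability or in distribution according to the mode assumed. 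Joint Skorohod convergence of $(X^n, Y^n)$ then also yields Skorohod convergence of the products $X^n \otimes Y^n \to X \otimes Y$ (a single time change works for both factors simultaneously), and combining with the identity above produces
\[
\int_0^\cdot X^n_{s-}\otimes \dd Y^n_s \longrightarrow X\otimes Y - X_0 \otimes Y_0 - \int_0^\cdot Y_{s-}\otimes \dd X_s - A,
\]
jointly with the other three processes. To match the announced form I would apply integration by parts once more, this time in the limit, to the semimartingales $X$ and $Y$:
\[
X_t \otimes Y_t - X_0 \otimes Y_0 = \int_0^t X_{s-}\otimes \dd Y_s + \int_0^t Y_{s-}\otimes \dd X_s + [X, Y]_t,
\]
and substitute to identify the limit as $\int_0^\cdot X_{s-}\otimes \dd Y_s + [X, Y] - A$. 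The $C$-tightness assertion then follows from the standard equivalence between $C$-tightness and the combination of Skorohod tightness with continuity of all limit points, once the identified limit is seen to be continuous.

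The main obstacle is ensuring genuine \emph{joint} (rather than only marginal) Skorohod convergence in the four-tuple above, since pairwise Skorohod convergence of two sequences is not in general stable under pairing them as a vector-valued sequence. For this reason I would extract the joint statement from a single application of a vector-valued Kurtz--Protter, exploiting the fact that $[X^n, Y^n]$ already sits inside the convergent triple supplied by the hypothesis, rather than trying to splice two independently obtained Skorohod limits together.
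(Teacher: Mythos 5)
Your proof is correct and follows essentially the same route as the paper: integration by parts on the level of the semimartingales $(X^n, Y^n)$, a single application of Kurtz--Protter to get the joint Skorohod convergence including $\int Y^n_{s-} \dd X^n_s$, and a second integration by parts in the limit to recast $X\otimes Y - X_0\otimes Y_0 - \int Y_{s-}\dd X_s - A$ into the announced form. Your explicit remark about needing a genuinely joint (vector-valued) application of Kurtz--Protter rather than pairing two separately obtained Skorohod limits is a legitimate subtlety that the paper's terse proof leaves implicit, but it is the same argument.
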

\begin{proof}
Using integration by parts, we have
\[
	\int_0^\cdot X^n_{s-} \otimes \dd Y^n_s = X^n \otimes Y^n - X^n_0 \otimes Y^n_0 -  \int_0^\cdot Y^n_{s-} \otimes \dd X^n_s - [X^n, Y^n],
\]
so that the claim follows from the Kurz-Protter result together with another integration by parts:
\[
	X \otimes Y - X_0 \otimes Y_0 -  \int_0^\cdot Y_{s-} \otimes \dd X_s - A = \int_0^\cdot X_{s-} \otimes \dd Y_s + [X,Y] - A.
\]
\end{proof}

Throughout this section we will often use the following representation of
additive functionals:

\begin{lemma}
  \label{lem:fb-decomp}Let $\Psi \in
  \mathcal{C} (\mathbb{R}^m)$. Then we have for $T > 0$
  \begin{equation}
    \label{eq:fb-decomp}  \int_0^t \mathcal{L}_S \Psi (X_s) \dd s = \frac{1}{2} (M^\Psi_t + \hat{M}^\Psi_T -
    \hat{M}^\Psi_{T - t}), \qquad t \in [0, T]
  \end{equation}
  where $M^\Psi$ is a martingale and $\hat{M}^\Psi$ is a martingale with respect to
  the backward filtration $\hat{\mathcal{F}}_t = \sigma (X_{T - s} : s
  \leqslant t)$, such that
  \begin{equation}
    \label{eq:fb-decomp-qv} \mathbb{E} [\langle M^\Psi \rangle_t] =\mathbb{E}
    [\langle \hat{M}^\Psi \rangle_t] = 2 E_{\pi} [\Psi \otimes (-\mathcal{L}_S)
    \Psi] t = 2 t (\langle \Psi_k, \Psi_{\ell} \rangle_1)_{1 \leqslant k,
    \ell \leqslant m}, \qquad t \in [0, T] .
  \end{equation}
  Assume that $\pi$ is ergodic for $\mathcal{L}^{\ast}$. Then under the
  rescaling $T \rightarrow n T$ and $M^{\Psi, n}_t = n^{- 1 / 2} M^\Psi_{n t}$ and
  similarly for $\hat{M}^{\Psi, n}$ both processes converge in distribution in $D
  ([0, T], \mathbb{R}^m)$ to a Wiener process, and by
  {\eqref{eq:fb-decomp-qv}} they satisfy the UCV condition.
  If $G,H \in L^2(\pi, \mathbb{R}^m)$ and
  $A_{s, t} = \int_s^t \int_s^{r_1} G (X_{r_2})\dd r_2 \otimes H (X_{r_1}) \dd r_1$ for $0 \leqslant s < t \leqslant
  T$, then
\begin{align} \label{eq:fb-decom-area}
    A_{s, t} & = \frac{1}{2} \int_s^t \int_s^{r_1} G (X_{r_2}) \dd r_2
    \otimes \dd M^\Psi_{r_1} - \frac{1}{2} \int_{T - t}^{T - s} \int_{T -
    r_1}^t G (X_{r_2}) \dd r_2 \otimes \dd \hat{M}^\Psi_{r_1}\\ \nonumber
    & \quad + \frac{1}{2} \int_s^t G (X_r) \dd r \otimes (\hat{M}^\Psi_{T -
    s} - \hat{M}^\Psi_{T - t}) + \int_s^t \int_s^{r_1} G (X_{r_2}) \dd r_2
    \otimes (H (X_{r_1}) -\mathcal{L}_S \Psi (X_{r_1})) \dd r_1 .
  \end{align}
\end{lemma}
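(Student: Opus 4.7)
The plan is to derive the forward-backward decomposition \eqref{eq:fb-decomp} from Dynkin's formula applied simultaneously to $X$ and to its time-reversal $\hat X_t := X_{T-t}$. Because $\pi$ is stationary, $\hat X$ is Markov in the backward filtration $\hat{\mc F}_t = \sigma(X_{T-s}:s \le t)$ with generator $\mc L^*$. Thus $M^\Psi_t := \Psi(X_t) - \Psi(X_0) - \int_0^t \mc L\Psi(X_s)\dd s$ and $\hat M^\Psi_t := \Psi(X_{T-t}) - \Psi(X_T) - \int_0^t \mc L^*\Psi(X_{T-s})\dd s$ are square-integrable martingales in the forward and backward filtrations respectively. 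Adding the forward equation to $\hat M^\Psi_T - \hat M^\Psi_{T-t}$ (after the substitution $r = T-s$ in the $\mc L^*$-integral), the boundary terms cancel and one obtains $-\int_0^t (\mc L + \mc L^*)\Psi(X_s)\dd s = -2\int_0^t \mc L_S\Psi(X_s)\dd s$, which yields \eqref{eq:fb-decomp} up to the sign convention implicit in the definitions of $M^\Psi, \hat M^\Psi$.

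For \eqref{eq:fb-decomp-qv} I would use the carr\'e du champ $\Gamma(F,G) := \mc L(FG) - F\mc L G - G\mc L F$, for which $\langle M^{\Psi_k}, M^{\Psi_\ell}\rangle_t = \int_0^t \Gamma(\Psi_k,\Psi_\ell)(X_s)\dd s$. Integrating against $\pi$, the term $\mc L(\Psi_k\Psi_\ell)$ vanishes and $\mc L_A$ cancels in the symmetric combination $\Psi_k \mc L \Psi_\ell + \Psi_\ell \mc L\Psi_k$, leaving $-2 E_\pi[\Psi_k \mc L_S\Psi_\ell] = 2\langle\Psi_k,\Psi_\ell\rangle_1$; the identical computation for $\hat M^\Psi$ relies on $\mc L^*$ having the same symmetric part $\mc L_S$. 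The UCV bound $\sup_n \mathbb E[[M^{\Psi,n}]_T] = 2T\, E_\pi[\Psi\otimes(-\mc L_S)\Psi] < \infty$ is then immediate. Convergence to a Wiener limit follows from the standard martingale functional CLT: ergodicity of $\pi$ for $\mc L$ (respectively $\mc L^*$) combined with Birkhoff's theorem gives $\langle M^{\Psi,n}\rangle_t\to 2t\, E_\pi[\Psi\otimes(-\mc L_S)\Psi]$ in probability, while stationarity renders the rescaled maximal jump asymptotically negligible.

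For \eqref{eq:fb-decom-area} I split $H(X_{r_1}) = \mc L_S\Psi(X_{r_1}) + (H-\mc L_S\Psi)(X_{r_1})$ inside $A_{s,t}$; the remainder produces the last summand on the right directly. Differentiating \eqref{eq:fb-decomp} in time gives, informally, $2\mc L_S\Psi(X_{r_1})\dd r_1 = \dd M^\Psi_{r_1} - \dd_{r_1}\hat M^\Psi_{T-r_1}$, and substituting into the $\mc L_S\Psi$-piece, the forward-martingale contribution yields the first summand of \eqref{eq:fb-decom-area} directly, with integrand $F_{r_1} := \int_s^{r_1}G(X_{r_2})\dd r_2$. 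The backward contribution $-\frac12 \int_s^t F_{r_1}\otimes \dd_{r_1}\hat M^\Psi_{T-r_1}$ requires more care since $F_{r_1}$ is not $\hat{\mc F}_{T-r_1}$-adapted: I would integrate by parts pathwise (valid since $F$ is absolutely continuous) and then apply the deterministic change of variable $u = T-r_1$ to convert it into an integral against $\dd \hat M^\Psi_u$ on $[T-t,T-s]$. Finally the algebraic identity $F_{T-u} = \int_s^t G(X_{r_2})\dd r_2 - \int_{T-u}^t G(X_{r_2})\dd r_2$ separates the boundary (third) summand from the second summand, whose integrand $\int_{T-u}^t G(X_{r_2})\dd r_2$ is now correctly $\hat{\mc F}_u$-adapted.

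The main obstacle is tightness in Skorohod topology toward a \emph{continuous} Wiener limit for the backward martingale $\hat M^{\Psi,n}$, which requires controlling its maximal jump on $[0,nT]$ after scaling by $\sqrt n$; this is precisely where the ergodicity hypothesis on $\mc L^*$ enters, on equal footing with the ergodicity of $\mc L$ used for the forward martingale.
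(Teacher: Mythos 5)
Your proposal is correct and follows essentially the same route as the paper: Dynkin's formula for $\Psi(X)$ and for the time-reversal $\Psi(X_{T-\cdot})$ with generator $\mathcal{L}^*$ yields \eqref{eq:fb-decomp}; the predictable quadratic variation comes from the carr\'e du champ (which is precisely Dynkin applied to $\Psi^2$, plus the approximation argument of \cite{Komorowski2012} if $\Psi^2$ is not in the domain); the martingale FCLT gives the weak convergence; and the area identity \eqref{eq:fb-decom-area} is obtained by substituting the differential form of \eqref{eq:fb-decomp}, integrating by parts pathwise against the continuous finite-variation factor, and changing variables $u=T-r_1$, exactly as the paper does via Riemann sums. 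Your final algebraic split $F_{T-u}=\int_s^t G(X_{r_2})\dd r_2 - \int_{T-u}^t G(X_{r_2})\dd r_2$ correctly separates the boundary term from the $\hat{\mathcal{F}}_u$-adapted integrand, reproducing the second and third summands of \eqref{eq:fb-decom-area}.
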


\begin{proof}
  The representation {\eqref{eq:fb-decomp}} is obtained e.g.\
   by applying Dynkin's formula to $\Psi (X)$ and $\Psi (\hat X)$ on $[0,u], u\in[0,T]$, and then computing $M^\Psi_t + \hat{M}^\Psi_T -
    \hat{M}^\Psi_{T - t}$. {If $\Psi^2$ is in the domain of $\mc L$, then also~{\eqref{eq:fb-decomp-qv}} follows from Dynkin's formula; otherwise we use an approximation argument, see p.35 of \cite{Komorowski2012}.} For the convergence
  of $M^{\Psi, n}$ and $\hat{M}^{\Psi, n}$ see the proof of Theorem~2.32/2.33 in
  {\cite{Komorowski2012}}. The representation for $A_{s, t}$ follows by
  writing the integral against $\hat{M}^\Psi_{T - \cdot}$ as a limit of Riemann
  sums -- note that $\int_0^{\cdot} G (X_r) \dd r$ is continuous and of
  finite variation, so the integral is defined pathwise and we do not need to
  worry about quadratic covariations or the difference between forward and
  backward integral.
\end{proof}
For $f \in D (\mathbb{R}_+, \mathbb{R}^d)$ and $\delta, T > 0$ we define the
modulus of continuity
\[ w_T (f, \delta) \assign \sup_{\tmscript{\begin{array}{c}
     s, t \in [0, T] :\\
     | s - t | \leqslant \delta
   \end{array}}} | f (t) - f (s) | . \]
We will need the
following lemma:

\begin{lemma}[{\cite{Jacod2003}}, Proposition VI.3.26]
  \label{lem:C-tight}

  The sequence $(Y^n)$ is C-tight if and only if the following two conditions
  hold:
  \begin{enumerate}
    \item[i.] For all $T > 0$ we have
    \[ \lim_{K \rightarrow \infty} \limsup_{n \rightarrow \infty} \mathbb{P}
       (\sup_{t \in [0, T]} | Y^n_t | > K) = 0 ; \]
    \item[ii.] for all $\varepsilon, T > 0$ we have
    \[ \lim_{\delta \rightarrow 0} \limsup_{n \rightarrow \infty} \mathbb{P} (w_T
       (Y^n, \delta) > \varepsilon) = 0. \]
  \end{enumerate}
  If $(Y^n)$ is a sequence of processes in $C (\mathbb{R}_+, \mathbb{R}^d)$,
  then these two conditions are equivalent to tightness in the uniform
  topology.
\end{lemma}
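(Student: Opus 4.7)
The strategy is to combine the Arzel\`a--Ascoli characterization of (pre)compact subsets of $C([0,T],\bbR^d)$ in the uniform topology with Prokhorov's theorem, exploiting that the uniform topology refines the Skorohod topology on $D([0,T],\bbR^d)$ and that $C([0,T],\bbR^d)$ embeds as a closed subspace of $D([0,T],\bbR^d)$.

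For the implication $(i)+(ii)\Rightarrow$ C-tightness, fix $T>0$ and $\varepsilon>0$. By $(i)$ pick $K$ with $\sup_n \bbP(\|Y^n\|_{\infty,[0,T]} > K) < \varepsilon/2$, and by $(ii)$ pick a sequence $\delta_k \downarrow 0$ with $\sup_n \bbP(w_T(Y^n,\delta_k) > 1/k) < \varepsilon/2^{k+1}$. The set
\[
A \assign \{ f \in C([0,T],\bbR^d) : \|f\|_{\infty,[0,T]} \le K \text{ and } w_T(f,\delta_k) \le 1/k \text{ for all } k \ge 1 \}
\]
is relatively compact in the uniform topology by Arzel\`a--Ascoli, and a union bound gives $\bbP(Y^n \in A) \ge 1-\varepsilon$ uniformly in $n$. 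Hence $(Y^n)$ is tight in $(C([0,T],\bbR^d),\|\cdot\|_\infty)$, which in particular yields tightness in $D([0,T],\bbR^d)$ with all subsequential limits continuous. A diagonal argument over $T \in \bbN$ extends this to $D(\bbR_+,\bbR^d)$.

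For the converse, assume $(Y^n)$ is C-tight. By Prokhorov there exist compacts $K_\varepsilon \subset D$ with $\sup_n \bbP(Y^n \notin K_\varepsilon) < \varepsilon$, and the classical characterization of Skorohod compactness implies uniform boundedness of elements of $K_\varepsilon$, which gives $(i)$, together with smallness of the c\`adl\`ag modulus $w_T'$. To upgrade this to the stronger $w_T$ appearing in $(ii)$, I would argue by contradiction: if $(ii)$ failed, extract $(Y^{n_k})$ with $\bbP(w_T(Y^{n_k},1/k)>\varepsilon_0) > \eta$ for fixed $\varepsilon_0,\eta > 0$; by C-tightness a further subsequence converges in Skorohod distribution to a continuous process $Y$, and by Skorohod representation and the continuity of $Y$ (which forces the Skorohod time-changes $\lambda_k$ to satisfy $\|\lambda_k-\mathrm{id}\|_\infty \to 0$ and $\|Y^{n_k}\circ\lambda_k - Y\|_{\infty,[0,T]} \to 0$ almost surely), one obtains $w_T(Y^{n_k},1/k) \to 0$ almost surely, contradicting the extraction. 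The final claim for $(Y^n) \subset C$ follows from the same Arzel\`a--Ascoli argument, since Prokhorov on the Polish space $(C([0,T],\bbR^d),\|\cdot\|_\infty)$ applies directly.

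The main obstacle is precisely this last upgrade from $w_T'$ to $w_T$ in the converse direction: Skorohod compactness by itself only controls the c\`adl\`ag modulus (the gap being the maximal jump size), and bridging it to the continuous modulus of continuity $w_T$ genuinely uses continuity of every subsequential limit rather than mere Skorohod tightness.
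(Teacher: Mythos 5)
The paper does not prove this lemma---it is cited directly from Jacod--Shiryaev, Proposition VI.3.26, and no argument is given. Your attempt is therefore a self-contained proof of the cited result, and both directions are essentially correct.

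Two remarks worth making explicit. In the forward direction, after the union bound you conclude $\bbP(Y^n\in A)\ge 1-\varepsilon$, where $A\subset C([0,T],\bbR^d)$; but $Y^n$ is a priori only c\`adl\`ag, so it is not immediate that the complementary event places $Y^n$ inside $A$. What saves the argument is that the modulus conditions force continuity: if $f$ has a jump of size $a>0$ at some $t\in(0,T]$, then $w_T(f,\delta)\ge a$ for every $\delta>0$, so the event $\{w_T(Y^n,\delta_k)\le 1/k\ \forall k\}$ forces $Y^n$ to be jump-free, hence an element of $C$. You should spell this out; without it the inclusion $\{Y^n\ \text{satisfies the bounds}\}\subset\{Y^n\in A\}$ is asserted but not justified. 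Once that is noted, $A$ is compact in $(C,\|\cdot\|_\infty)$, hence compact (and in particular closed) in $D$ with the Skorohod topology, and tightness plus continuity of all limit points follows.

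Your converse argument is sound: the contradiction via Skorohod representation hinges on the standard fact that Skorohod convergence to a continuous limit upgrades to locally uniform convergence, which gives $w_T(Y^{n_k},1/k)\le 2\|Y^{n_k}-Y\|_{\infty,[0,T]}+w_T(Y,1/k)\to 0$ almost surely. Compared with the textbook route---which passes through the c\`adl\`ag modulus $w'_T$ and bridges the gap to $w_T$ quantitatively via the maximal jump size---your forward direction is arguably more elementary (no $D$-tightness criteria needed), while your converse trades a quantitative modulus comparison for the Skorohod representation theorem. Both routes are legitimate.
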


Since the uniform modulus of continuity is subadditive, i.e. $w_T (f + g, \delta)
\leqslant w_T (f, \delta) + w_T (g, \delta)$, it follows from this Lemma that the sum of
two C-tight sequences is again C-tight. Note that the same is not necessarily true for
sequences that are tight in the Skorohod topology on $D (\mathbb{R}_+,
\mathbb{R}^d)$.
\begin{lemma}
  \label{lem:uniform-tightness}Under the assumptions of
  Lemma~\ref{lem:uniform-tightness-for-path} the sequence $(Z^n,
  \mathbb{Z}^n)$ is tight in $C (\mathbb{R}_+, \mathbb{R}^d \oplus
  \mathbb{R}^{d \otimes d})$.
\end{lemma}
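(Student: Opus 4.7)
My plan is to reduce the problem via Remark~\ref{rmk:one-parameter} to showing C-tightness of the one-parameter processes $(Z^n_{0,\cdot}, \mathbb{Z}^n_{0,\cdot})$ in $C([0,T], \bbR^d \oplus \bbR^{d\times d})$. Lemma~\ref{lem:uniform-tightness-for-path} already provides C-tightness (in fact convergence) of $Z^n_{0,\cdot}$, so the only remaining task is to control the area process $\mathbb{Z}^n_{0,\cdot}$. The main tool will be the representation \eqref{eq:fb-decom-area} of Lemma~\ref{lem:fb-decomp}, applied with $G = H = F$ and $\Psi = \Phi_\lambda$, the resolvent approximation from \eqref{eq:additive-rp-assumption-1}. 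With these choices $H - \mathcal{L}_S \Psi = F - \mathcal{L}_S \Phi_\lambda = \lambda \Phi_\lambda + \mathcal{L}_A \Phi_\lambda$, and the analysis of this residue will drive the whole argument.

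After the change of variables $r_i \mapsto nu_i$ and division by $n$, \eqref{eq:fb-decom-area} expresses $\mathbb{Z}^n_{0,t}$ as a sum of four pieces: (i) the stochastic integral $\tfrac12 \int_0^t Z^n_{u-} \otimes \dd M^{\Phi_\lambda, n}_u$ of $Z^n$ against the rescaled forward martingale $M^{\Phi_\lambda, n}_u := n^{-1/2} M^{\Phi_\lambda}_{nu}$; (ii) an analogous integral against the rescaled backward martingale $\hat M^{\Phi_\lambda, n}$; (iii) the boundary product $\tfrac12 Z^n_t \otimes (\hat M^{\Phi_\lambda, n}_T - \hat M^{\Phi_\lambda, n}_{T-t})$; and (iv) the residual $\sqrt n \int_0^t Z^n_{u-} \otimes (\lambda \Phi_\lambda + \mathcal{L}_A \Phi_\lambda)(X_{nu})\,\dd u$. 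For any fixed $\lambda > 0$ I expect pieces (i)--(iii) to be C-tight: by \eqref{eq:fb-decomp-qv} one has $\sup_n \bbE[[M^{\Phi_\lambda, n}]_T] < \infty$, so $M^{\Phi_\lambda, n}$ satisfies the UCV condition, and joint C-tightness of $(Z^n, M^{\Phi_\lambda, n})$ is inherited from the individual tightness, with a joint limit that can be identified as in the proof of Lemma~\ref{lem:uniform-tightness-for-path}. Corollary~\ref{cor:UCV} then delivers C-tightness of (i); piece (ii) is symmetric via the backward filtration (this is where the ergodicity of $\mathcal L^\ast$ enters); and piece (iii) is a pointwise product of increments of two C-tight families, hence C-tight by Lemma~\ref{lem:C-tight} together with subadditivity of the modulus $w_T$.

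The hard part will be piece (iv), which does not vanish for fixed $\lambda$. My plan is to let $\lambda = \lambda_n \to 0$ at a rate allowed by \eqref{eq:additive-rp-assumption-1}, for instance $\lambda_n = 1/n$, so that $\sqrt{\lambda_n}\|\Phi_{\lambda_n}\|_\pi \to 0$. The $\lambda \Phi_\lambda$ contribution to (iv) will then be dominated in uniform norm by $\|Z^n\|_{\infty,[0,T]} \cdot \lambda_n \sqrt n\, T\, \|\Phi_{\lambda_n}\|_\pi$ (via stationarity and Tonelli), which is $o_{\bbP}(1)$. The $\mathcal{L}_A \Phi_\lambda$ contribution is more delicate: applying Dynkin's formula separately to $\mathcal L \Phi_\lambda$ (forward) and $\mathcal L^\ast \Phi_\lambda$ (backward) represents $\int_0^{nt} \mathcal{L}_A \Phi_\lambda(X_s)\,\dd s$ as a linear combination of the martingales $M^{\Phi_\lambda}$, $\hat M^{\Phi_\lambda}$ and boundary terms of the form $\Phi_\lambda(X_{nt}) - \Phi_\lambda(X_0)$. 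Substituting this back into (iv) and integrating by parts against $Z^n$, the martingale pieces become Kurtz--Protter-type integrals handled again by Corollary~\ref{cor:UCV}, the boundary pieces are controlled in $L^2$ by $n^{-1/2}\|\Phi_{\lambda_n}\|_\pi = o(1)$ under \eqref{eq:additive-rp-assumption-1}, and the remaining smooth drift is trivially C-tight. A diagonal argument in $(n, \lambda_n)$ combined with criterion~(ii) of Lemma~\ref{lem:C-tight} will then close the proof.
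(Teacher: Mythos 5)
Your algebraic identification of the residue is incorrect, and this breaks the whole treatment of piece~(iv). With the paper's convention $(\lambda - \mathcal{L})\Phi_\lambda = F$, we have $\mathcal{L}\Phi_\lambda = \lambda\Phi_\lambda - F$, hence $\mathcal{L}_S\Phi_\lambda = \lambda\Phi_\lambda - F - \mathcal{L}_A\Phi_\lambda$ and
\[
F - \mathcal{L}_S\Phi_\lambda \;=\; 2F - \lambda\Phi_\lambda + \mathcal{L}_A\Phi_\lambda,
\]
not $\lambda\Phi_\lambda + \mathcal{L}_A\Phi_\lambda$ as you claim. (You would get your identity from the opposite-sign resolvent $(\lambda + \mathcal{L})\Phi_\lambda = F$, which is not what \eqref{eq:additive-rp-assumption-1} uses.) The extra $2F$ term is of order one in every norm and does not disappear when $\lambda_n \to 0$; intuitively, as $\lambda \to 0$ the resolvent solves $-\mathcal{L}\Phi_\lambda \approx F$, so $\mathcal{L}_S\Phi_\lambda$ approximates $-F - \mathcal{L}_A\Phi$, not $F$. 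Thus piece~(iv) is not small, your Tonelli bound $\lambda_n\sqrt{n}\,T\,\|\Phi_{\lambda_n}\|_\pi$ only controls the $\lambda\Phi_\lambda$ fragment, and the $\mathcal{L}_A$-decomposition into martingales plus $o(1)$ boundary terms does not rescue the $2F$ piece. As written, the argument cannot close.

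The paper sidesteps this precisely by \emph{not} taking $\Psi = \Phi_\lambda$. It invokes the density of $\mathcal{L}_S\mathcal{C}$ in $\mathcal{H}^{-1}$ (Claim~B, p.42 of \cite{Komorowski2012}) to pick $\Phi_m \in \mathcal{C}$ with $\|F - \mathcal{L}_S\Phi_m\|_{-1} < 1/m$; for \emph{fixed} $m$, pieces (i)--(iii) are C-tight in $n$, and the residual is controlled uniformly in $n$ by $T\|F - \mathcal{L}_S\Phi_m\|_{-1}^2 < 1/m^2$ via the iterated Kipnis--Varadhan estimate of Lemma~\ref{lem:iterated-kv}, after a further forward--backward split of the $Z^n$ factor. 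The two limits $n\to\infty$ and $m\to\infty$ remain cleanly separated; no diagonalization $\lambda = \lambda_n$ is ever needed. If you want to keep a resolvent-style construction, the appropriate object is the \emph{symmetric} resolvent $\Psi_\lambda$ solving $(\lambda - \mathcal{L}_S)\Psi_\lambda = -F$ (as in the proof of Corollary~\ref{cor:p-var-iterated-kv}), for which $F - \mathcal{L}_S\Psi_\lambda = -\lambda\Psi_\lambda$ genuinely is small. Note also that $\Phi_\lambda$ need not lie in $\mathcal{C}$, so \eqref{eq:fb-decom-area} would require an extra approximation step; and for pieces (i)--(iii) with $n$-dependent $\lambda_n$ you would still owe a joint-convergence argument for $(Z^n, M^{\Phi_{\lambda_n},n})$, which is not free. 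None of these secondary issues matter, however, until the $2F$ term is dealt with.
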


\begin{proof}
  By Lemma~\ref{lem:uniform-tightness-for-path} and
  Remark~\ref{rmk:one-parameter} it suffices to show that $\mathbb{Z}^n_{0,
  \cdot}$ is tight in $C (\mathbb{R}_{_{} +}, \mathbb{R}^{d \otimes d})$.
  We shall use Lemma \ref{lem:C-tight}. Since the set $\mathcal{L}_S \mathcal{C}$ is dense in $\mathcal{H}^{- 1}$, see
  Claim B on p.42 of {\cite{Komorowski2012}}, we can find
  $\Phi_m \in \mathcal{C} (\mathbb{R}^d)$ so that
  $\| F -\mathcal{L}_S \Phi_m \|_{- 1} < \frac{1}{m}$.
  Then eq.~\eqref{eq:fb-decom-area} from Lemma \ref{lem:fb-decomp} {(or rather a slight modification with the inner integral in the second term on the right hand side running from $T-r_1$ to $T$ instead of from $T-r_1$ to $t$)}
  gives
\begin{align*}
    \mathbb{Z}^n_{0, t} & = \frac{1}{2} \int_0^t Z^n_s \otimes \dd M^{\Phi_m,
    n}_s - \frac{1}{2} \int_{T - t}^T (Z^n_T - Z^n_{T - s}) \otimes \dd
    \hat{M}^{\Phi_m,n}_s + \frac{1}{2} Z^n_T \otimes (\hat{M}^{\Phi_m,n}_T - \hat{M}^{\Phi_m,n}_{T -
    t})\\
    & \quad + \int_0^t Z^n_s \otimes \sqrt{n} (F (X_{n s}) -\mathcal{L}_S
    \Phi_m (X_{n s})) \dd s.
  \end{align*}
  By Theorem \ref{lem:UCV} together with Lemma~\ref{lem:uniform-tightness-for-path} the two stochastic
  integrals are C-tight in $D ([0, T], \mathbb{R}^d)$ (note that $Z^n_T -
  Z^n_{T - s}$ is adapted to $\hat{\mathcal{F}}_s$). The third term on the
  right hand side is C-tight by the characterization of Lemma
  \ref{lem:C-tight}. It remains to treat the term
\begin{align*}
    & \int_0^t Z^n_s \otimes \sqrt{n} (F (X_{n s}) -\mathcal{L}_S \Phi_m
    (X_{n s})) \dd s\\
    & = \int_0^t \frac{1}{2} (M^{F, n}_s + \hat{M}^{\Phi_m, n}_T - \hat{M}^{F,
    n}_{T - s}) \otimes \sqrt{n} (F (X_{n s}) -\mathcal{L}_S \Phi_m (X_{n s}))
    \dd s\\
    & \quad + \int_0^t \frac{1}{\sqrt{n}} \int_0^{n s} (F (X_r)
    -\mathcal{L}_S \Phi_m (X_r)) \dd r \otimes \sqrt{n} (F (X_{n s})
    -\mathcal{L}_S \Phi_m (X_{n s})) \dd s.
  \end{align*}
  By Corollary \ref{cor:UCV} the integral of the two martingales can
  be handled as before.
  The remaining term satisfies
  \[ \mathbb{E} \left[ \sup_{t \in [0, T]} \left| \frac{1}{n} \int_0^{n t}
     \int_0^s (F (X_r) -\mathcal{L}_S \Phi_m (X_r)) \dd r \otimes (F (X_s)
     -\mathcal{L}_S \Phi_m (X_s)) \dd s \right| \right] \lesssim T \| F
     -\mathcal{L}_S \Phi_m \|_{- 1}^2 < \frac{1}{m^2} \]
  by Lemma~\ref{lem:iterated-kv} in the appendix. Combining all this with the
  necessity of the conditions in Lemma~\ref{lem:C-tight}, we get
\begin{align*}
    & \lim_{K \rightarrow \infty} \limsup_{n \rightarrow \infty} \mathbb{P}
    \left(\sup_{t \in [0, T]} | \mathbb{Z}^n_{0, t} | > K \right)\\
    & \leqslant \lim_{K \rightarrow \infty} \limsup_{n \rightarrow \infty}
    \mathbb{P} \left( \sup_{t \in [0, T]} \left| \frac{1}{n} \int_0^{n t}
    \int_0^s (F (X_r) -\mathcal{L}_S \Phi_m (X_r)) \dd r \otimes (F (X_s)
    -\mathcal{L}_S \Phi_m (X_s)) \dd s \right| > \frac{K}{2} \right) = 0
  \end{align*}
  by Chebyshev's inequality, and similarly we get by bounding $w_T(f,\delta) \le 2 \|f\|_{\infty,[0,T]}$:
\begin{align*}
    & \lim_{\delta \rightarrow 0} \limsup_{n \rightarrow \infty} \mathbb{P} (w_T
    (\mathbb{Z}^n_{0, \cdot}, \delta) > \varepsilon) \\
    & \leqslant \lim_{\delta \rightarrow 0} \limsup_{n \rightarrow \infty}
    \mathbb{P} \left( w_T \left( \frac{1}{n} \int_0^{n \cdot} \int_0^s (F
    (X_r) -\mathcal{L}_S \Phi_m (X_r)) \dd r \otimes (F (X_s)
    -\mathcal{L}_S \Phi_m (X_s)) \dd s, \delta \right) > \frac{\varepsilon}{2}
    \right)\\
    & \lesssim \frac{2}{\varepsilon} T \| F -\mathcal{L}_S \Phi_\lambda \|_{- 1}^2 <
    \frac{2T}{m^2\varepsilon}\to 0 \text{ as }m\to\infty.
  \end{align*}
  Hence
  $\mathbb{Z}^n_{0, \cdot}$ satisfies the assumptions of
  Lemma~\ref{lem:C-tight} and therefore it is tight in $C (\mathbb{R}_+,
  \mathbb{R}^{d \otimes d})$ and the proof is complete.
\end{proof}

To apply Lemma \ref{lem:rp-conv} it remains to show that $\|  (Z^n,
\mathbb{Z}^n) \|_{p, [0, T]}$ is a tight sequence of real valued random
variables. For that purpose we first recall the following estimate:

\begin{lemma}
  \label{lem:kv-p-var}Let $G \in L^2 (\pi) \cap \mathcal{H}^{- 1}$ and $T > 0$
  and $p > 2$. Then
  \[ \mathbb{E} \left[ \sup_{t \leqslant T} \left| \int_0^t G (X_s) \dd s
     \right|^2 + \left\| \int_0^{\cdot} G (X_s) \dd s \right\|_{p, [0,
     T]}^2 \right] \lesssim T \| G \|_{- 1}^2 . \]
\end{lemma}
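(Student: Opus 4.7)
The strategy is to first treat the case $G = \mc L_S \Psi$ for $\Psi \in \mc C$ using the forward-backward martingale representation of Lemma~\ref{lem:fb-decomp}, and then pass to the general case by approximation in $\mc H^{-1}$, exploiting the density of $\mc L_S \mc C$ in $\mc H^{-1}$ (Claim B on p.42 of \cite{Komorowski2012}).

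For $G = \mc L_S \Psi$ with $\Psi \in \mc C$, optimizing the variational definition of $\|\cdot\|_{-1}$ gives $\|G\|_{-1} = \|\Psi\|_1$, and Lemma~\ref{lem:fb-decomp} yields
\[
\int_s^t G(X_u)\, \dd u = \tfrac12\bigl(M^\Psi_t - M^\Psi_s\bigr) - \tfrac12\bigl(\hat M^\Psi_{T-t} - \hat M^\Psi_{T-s}\bigr), \qquad 0 \le s \le t \le T,
\]
where $M^\Psi$ and $\hat M^\Psi$ are a forward resp. backward martingale, each with $\mathbb{E}[\langle\cdot\rangle_T] = 2T\|\Psi\|_1^2 = 2T\|G\|_{-1}^2$. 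Doob's $L^2$ maximal inequality controls the supremum contribution by $\lesssim T\|G\|_{-1}^2$, while L\'epingle's BDG inequality in $p$-variation \cite{Lepingle1975} (which crucially requires $p > 2$) gives $\mathbb{E}\|M^\Psi\|_{p, [0,T]}^2 \lesssim \mathbb{E}\langle M^\Psi\rangle_T \lesssim T\|G\|_{-1}^2$, and the same bound holds for $\hat M^\Psi_{T - \cdot}$ since path reversal preserves $p$-variation. Adding up settles the estimate in the smooth case.

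For general $G \in L^2(\pi) \cap \mc H^{-1}$, I would pick $G_m = \mc L_S \Psi_m$ with $\Psi_m \in \mc C$ and $G_m \to G$ in $\mc H^{-1}$. Applying the smooth-case estimate to the differences $G_m - G_n$ shows that $I^m_\cdot := \int_0^\cdot G_m(X_s)\, \dd s$ is Cauchy in $L^2(\mathbb{P})$ with respect to the norm $\|\cdot\|_{\infty, [0,T]} + \|\cdot\|_{p, [0,T]}$, and the limit obeys the desired bound by lower semicontinuity. The main obstacle is then to identify this limit with the pathwise integral $\int_0^\cdot G(X_s)\, \dd s$: convergence in $\mc H^{-1}$ alone does not entail $L^2(\pi)$-convergence, and hence not pointwise $L^2(\mathbb{P})$-convergence of $I^m_t$. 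I would overcome this by refining the approximation so that $G_m \to G$ simultaneously in $\mc H^{-1}$ and in $L^2(\pi)$ -- for instance by first performing a spectral truncation $G^{(m)} = \mathbf{1}_{[1/m, m]}(-\mc L_S)\, G$, which converges to $G$ in both norms by the functional calculus for the self-adjoint operator $\mc L_S$, and then approximating $(-\mc L_S)^{-1} G^{(m)}$ within the core $\mc C$ in the graph norm of $\mc L_S$ -- after which the trivial bound $\mathbb{E}|I^m_t - I_t|^2 \le T^2 \|G_m - G\|_{L^2(\pi)}^2 \to 0$ fixes the identification.
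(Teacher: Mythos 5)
Your reconstruction is correct, and since the paper's own ``proof'' of this lemma is a one-line pointer to Corollary~3.5 of \cite{Gubinelli2018Energy}, your argument is essentially the intended one: forward--backward martingale decomposition via Lemma~\ref{lem:fb-decomp}, Doob's $L^2$ maximal inequality for the supremum, L\'epingle's $p$-variation BDG inequality (Theorem~\ref{thm:lepingle BDG}) for the $p$-variation norm (using that time reversal preserves $p$-variation and that $\bb E[[M^\Psi]_T]=\bb E[\langle M^\Psi\rangle_T]$), and then approximation of $G$ by elements of $\mathcal L_S\mathcal C$. The only place where you take a more roundabout route than necessary is the identification of the limit at the end. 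You correctly flag that $\mathcal H^{-1}$-convergence of $G_m\to G$ does not by itself give $L^2(\bb P)$-convergence of the corresponding additive functionals, and you patch this with a spectral truncation $\mathbf 1_{[1/m,m]}(-\mathcal L_S)$. The route the paper itself takes in the structurally identical arguments of Corollary~\ref{cor:p-var-iterated-kv} and Lemma~\ref{lem:iterated-kv} is cleaner and avoids the identification question entirely: take $\Psi=\Phi_\lambda$, the solution of the resolvent equation $(\lambda-\mathcal L_S)\Phi_\lambda=-G$, approximated within $\mathcal C$ in graph norm for each fixed $\lambda$. Then the residual is $G-\mathcal L_S\Phi_\lambda=\lambda\Phi_\lambda$, whose $L^2(\pi)$-norm is $\lambda\|\Phi_\lambda\|_{L^2(\pi)}\lesssim\sqrt\lambda\,\|G\|_{-1}$, so its contribution to the supremum and to the $p$-variation (which you dominate by the $1$-variation $\int_0^T|\lambda\Phi_\lambda(X_s)|\dd s$) is $O(\sqrt\lambda\, T\|G\|_{-1})$ in $L^2(\bb P)$ and vanishes as $\lambda\to0$, while the martingale contribution is bounded by $T\|\Phi_\lambda\|_1^2\le T\|G\|_{-1}^2$ uniformly in $\lambda$. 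This gives the estimate directly for $\int_0^\cdot G(X_s)\dd s$ without ever needing to identify a Cauchy limit.
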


\begin{proof}
  See Corollary 3.5 in {\cite{Gubinelli2018Energy}}. This corollary is written
  for the specific process studied in {\cite{Gubinelli2018Energy}}, but the
  proof carries over verbatim to the general situation considered here.
\end{proof}

In particular, we get $\mathbb{E} [\| Z^n \|_{p, [0, T]}^2] \lesssim T \| F
\|_{- 1}^2$. To bound $\| \mathbb{Z}^n \|_{p, [0, T]}$ we need the following
auxiliary result, which is the core technical result of this section and which
replaces the Burkholder-Davis-Gundy inequality for local martingale rough
paths of {\cite{Chevyrev2019, friz2018differential}}  in the case where only the integrator
is a local martingale:

\begin{proposition}\label{prop:area-variation}
Let $(Y_t)_{t \in [0, T]}$ be a predictable
  c\`adl\`ag process with $Y_0 = 0$ and such that $\mathbb{E} [\| Y \|_{p,[0, T]}^2] <
  \infty$ for some $p > 2$ and let $(N_t)_{t \in [0, T]}$ be a c\`adl\`ag local
  martingale with $\mathbb{E} [\langle N \rangle_T] < \infty$. Define $A_{s,
  t} \assign \int_s^t Y_{s, r} \dd N_r$. Then for any $q > p > 2$ and for all
  sufficiently small $\varepsilon > 0$
\begin{align*}
    \mathbb{E} [\| A \|_{q / 2, [0, T]}^{1 - \varepsilon}] & \lesssim
    \left(1 +\mathbb{E} \left[\| Y \|_{p, [0, T]}^2\right]^{1 / 2} \right) \mathbb{E} [\langle N \rangle_T]^{1 / 2} .
  \end{align*}
\end{proposition}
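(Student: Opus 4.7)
The plan is to combine Lépingle's $p$-variation BDG inequality with a multi-scale chaining argument built from a Chen-type defect identity for $A$.

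First I would localize: stopping at $\tau_K := \inf\{t : \|Y\|_{p,[0,t]}^2 \vee \langle N\rangle_t \geq K\}$ reduces matters to the setting where $\|Y\|_{p,[0,T]}^2$ and $\langle N\rangle_T$ are uniformly bounded by $K$; the general statement then follows by monotone convergence as $K \to \infty$. The central observation is the defect identity
\[
A_{s,t} - A_{s,u} - A_{u,t} = Y_{s,u}\,(N_t - N_u), \qquad s \leq u \leq t,
\]
whose right-hand side is centered given $\mathcal{F}_u$ because $N$ is a martingale. Iterating this identity over a partition $\mathcal{P} = \{s = u_0 < \ldots < u_k = t\}$ and splitting $Y_r - Y_s = Y_{s,u_i} + Y_{u_i,r}$ on each $(u_i, u_{i+1}]$ produces the representation
\[
A_{s,t} = \sum_{i=0}^{k-1} A_{u_i, u_{i+1}} + \int_s^t \bar{Y}^{\mathcal{P}}_r \, dN_r,
\]
with $\bar{Y}^{\mathcal{P}}_r := Y_{s, u_i}$ for $r \in (u_i, u_{i+1}]$ predictable. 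For any $p' \in (p, q)$, Lépingle's BDG applied to the martingale transform $M^\mathcal{P}_t := \int_0^t \bar{Y}^\mathcal{P}_r dN_r$ yields
\[
\mathbb{E}\bigl[\|M^\mathcal{P}\|_{p',[0,T]}^2\bigr] \lesssim \mathbb{E}\bigl[\langle M^\mathcal{P}\rangle_T\bigr] \leq \mathbb{E}\bigl[\|Y\|_{p,[0,T]}^2 \, \langle N\rangle_T\bigr].
\]

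The bulk of the proof is then a dyadic application of this decomposition. I would take $\pi_n = \{kT/2^n : 0 \leq k \leq 2^n\}$ and, for each $[u,v] \in \pi_n$, iterate the identity to express $A_{u,v}$ as a telescoping sum of martingale transforms at successive scales $n+1, n+2, \ldots$, plus a residual sum of $A$-increments whose mesh tends to zero. The latter vanishes in $L^2$ by the regularity of $Y$ and the integration against the predictable integrator, so $A$ becomes a convergent series of martingale transforms, each amenable to the Lépingle bound above. To convert the scale-by-scale $p'$-variation control of the martingale pieces into the target $q/2$-variation of $A$, I would use a Besov-type embedding in the spirit of Lemma~\ref{lem:kv-p-var} (as in \cite{Gubinelli2018Energy}); the resulting dyadic series closes for any $q > p$, but only after one absorbs an arbitrarily small moment loss $\varepsilon$ to dominate the geometric growth in the number of intervals per scale. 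A final Cauchy–Schwarz then separates $\|Y\|_p$ from $\langle N\rangle_T$ into the advertised product form.

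The principal obstacle is the scale-by-scale bookkeeping. Since $p$ and the Lépingle exponent $p_N$ of $N$ both exceed $2$, the classical Young–Love condition $1/p + 1/p_N > 1$ fails, so purely pathwise reasoning cannot close the estimate; compensation must come from the martingale structure of $N$ via Lépingle's inequality at every dyadic scale. Balancing the $2^n$-fold growth of intervals per scale against the decay of $\|Y\|_{p,[u,v]}^p$ on $[u,v] \in \pi_n$ is what forces the moment exponent in the final estimate to fall short of $1$.
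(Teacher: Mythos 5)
Your high-level plan --- combine Chen's relation with L\'epingle's $p$-variation BDG inequality to control $A$ via martingale transforms --- is the right spirit, and the defect identity $A_{s,t}-A_{s,u}-A_{u,t}=Y_{s,u}N_{u,t}$ together with the resulting representation $A_{s,t}=\sum_i A_{u_i,u_{i+1}}+\int_s^t\bar Y^{\mathcal P}_r\,\dd N_r$ is correct. However, the discretization you propose is dyadic \emph{in time}, and there the argument breaks down: to close the telescoping series of martingale transforms you need quantitative decay, across scales $m$, of the approximation error $Y_{r-}-\bar Y^{\pi_m}_r$, i.e.\ of the scale-$m$ dyadic increments of $Y$. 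Finite $p$-variation with $p>2$ does not provide this. For instance, if $Y$ has a single jump at a non-dyadic time, then $\sup_r|Y_{r-}-\bar Y^{\pi_m}_r|$ stays bounded away from zero for all $m$, and in general $\max_{[a,c]\in\pi_{m+1}}|Y_{a,c}|$ need not shrink, so the brackets $\langle M^{\pi_{m+1}}-M^{\pi_m}\rangle_T$ have no geometric decay to feed into L\'epingle. Your assertion that the residual ``vanishes in $L^2$ by the regularity of $Y$'' tacitly presupposes a H\"older-type modulus of continuity on dyadic increments, which $p$-variation does not furnish, and neither an $\varepsilon$-loss in moments nor a Besov-embedding step can manufacture summability from a series whose terms do not decay: the level-$n$ bound $\sum_{[u,v]\in\pi_n}\|Y\|_{p,[u,v]}^p\le\|Y\|_{p,[0,T]}^p$ gives $\ell^p$-control \emph{along each fixed level} but neither decay of individual terms nor summability over $n$.

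The paper's proof resolves precisely this obstruction by discretizing $Y$ in its \emph{range} rather than in time: the stopping times $\tau^n_{k+1}:=\inf\{t\ge\tau^n_k:|Y_{\tau^n_k,t}|\ge 2^{-n}\}$ yield a step process $Y^n$ with the uniform bound $\sup_t|Y_{t-}-Y^n_t|\le 2^{-n}$ (jumps included), at the cost of a path-dependent partition whose cardinality on any $[s,t]$ is bounded by $2^{np}\|Y\|_{p,[s,t]}^p$. A Young-type maximal inequality (greedy point deletion) then controls the step-function part against an explicit control function $c(s,t)$, and --- this is the step your proposal has no analogue of --- the scale $n$ is chosen \emph{adaptively as a function of $(s,t)$}, namely so that $2^nc(s,t)^{1/p}\|Y\|_{p,[0,T]}\approx 1$, which trades the $|n|2^{-n}$ error factor against the $2^{np(1-1/p-1/q)}$ Young factor and produces the logarithmic loss and hence the $\varepsilon$. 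A fixed dyadic grid in time cannot perform this balance; the scale must depend on both the path of $Y$ and the interval $[s,t]$. If you replace $\pi_n$ by these adapted stopping-time partitions and make the scale choice interval-dependent, you essentially recover the paper's argument.
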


To not disrupt the flow of reading we give the proof in Section~\ref{sec:proof-area-variation} below, see in particular the more precise result in Proposition~\ref{prop:area-variation-precise}.

\begin{corollary}
  \label{cor:p-var-iterated-kv}Let $G, H \in \mathcal{H}^{- 1} \cap L^2 (\pi)$
  and set $A_{s, t} = \int_s^t \int_s^{r_1} G (X_{r_2}) \dd r_2 H (X_{r_1})
  \dd r_1$. Then we have for all $p > 2$ and $T > 0$ and $\varepsilon > 0$
  \[ \mathbb{E} [\| A \|_{p / 2, [0, T]}^{1 - \varepsilon}] \lesssim (1 + T^{1
     / 2} \| G \|_{- 1}) (1 + T^{1 / 2} \| H \|_{- 1}) . \]
\end{corollary}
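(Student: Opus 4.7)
The plan is to apply the forward–backward decomposition~\eqref{eq:fb-decom-area} of Lemma~\ref{lem:fb-decomp}, after approximating $H$ by $\mathcal{L}_S\Phi$ for some $\Phi$ in the core $\mathcal{C}$. This replaces the outer Riemann integral by a (small) remainder plus stochastic integrals against the forward and backward martingales $M^\Phi, \hat M^\Phi$, which can be controlled by Proposition~\ref{prop:area-variation}, together with a bilinear product term handled by an elementary product inequality in $p$-variation.

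Concretely, since $\mathcal{L}_S\mathcal{C}$ is dense in $\mathcal{H}^{-1}$ (as recalled in the proof of Lemma~\ref{lem:uniform-tightness}), for each $m$ I choose $\Phi_m \in \mathcal{C}(\mathbb{R}^d)$ with $\|H - \mathcal{L}_S\Phi_m\|_{-1} < 1/m$; the isometry $\|\mathcal{L}_S\Phi\|_{-1} = \|\Phi\|_1$ then gives $\|\Phi_m\|_1 \le \|H\|_{-1} + 1/m$. Inserting $\Psi = \Phi_m$ and the splitting $H = \mathcal{L}_S\Phi_m + (H - \mathcal{L}_S\Phi_m)$ into~\eqref{eq:fb-decom-area} expresses $A_{s,t}$ as the sum of two stochastic integrals of $Y_{s,r_1}:=\int_s^{r_1} G(X_{r_2})\,dr_2$ against $M^{\Phi_m}$ and $\hat M^{\Phi_m}$, a bilinear term $\tfrac12(\int_s^t G\,dr)\otimes(\hat M^{\Phi_m}_{T-s} - \hat M^{\Phi_m}_{T-t})$, and a remainder $R^m_{s,t} := \int_s^t Y_{s,r_1}(H - \mathcal{L}_S\Phi_m)(X_{r_1})\,dr_1$.

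For each stochastic integral I will apply Proposition~\ref{prop:area-variation} with $Y_r = \int_0^r G(X_u)\,du$ (continuous and adapted, hence predictable, with $Y_0 = 0$) and $N = M^{\Phi_m}$ or $\hat M^{\Phi_m}$. For any $p' \in (2,p)$, Lemma~\ref{lem:kv-p-var} yields $\mathbb{E}[\|Y\|_{p',[0,T]}^2]^{1/2} \lesssim T^{1/2}\|G\|_{-1}$, and~\eqref{eq:fb-decomp-qv} gives $\mathbb{E}[\langle M^{\Phi_m}\rangle_T]^{1/2} \lesssim T^{1/2}\|\Phi_m\|_1$. Taking $q = p$ in Proposition~\ref{prop:area-variation}, the $L^{1-\varepsilon}$-moment of the $p/2$-variation of each stochastic integral is bounded by $(1 + T^{1/2}\|G\|_{-1})\cdot T^{1/2}\|\Phi_m\|_1$. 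The bilinear product term is controlled via the elementary estimate $\|fg\|_{p/2,[0,T]} \le \|f\|_{p,[0,T]}\|g\|_{p,[0,T]}$ applied to the increments, combined with Cauchy–Schwarz, Lemma~\ref{lem:kv-p-var}, and L\'epingle's BDG inequality in $p$-variation, giving a bound of the same order. Together these contributions are $\lesssim (1 + T^{1/2}\|G\|_{-1})(1 + T^{1/2}\|H\|_{-1})$, uniformly in $m$.

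The main obstacle is controlling $\mathbb{E}[\|R^m\|_{p/2,[0,T]}^{1-\varepsilon}]$ and showing it vanishes as $m \to \infty$. My plan is to iterate the decomposition: at the $k$-th step, approximate the current residual in $\mathcal{H}^{-1}$ by $\mathcal{L}_S\Psi^{(k)}$ with error below $1/m^{k+1}$, apply~\eqref{eq:fb-decom-area} to extract a new main part (bounded via Proposition~\ref{prop:area-variation} and the product inequality by $(1 + T^{1/2}\|G\|_{-1})T^{1/2}/m^k$), and leave a strictly smaller remainder. Summing the geometric series in $k$ yields $\mathbb{E}[\|R^m\|_{p/2,[0,T]}^{1-\varepsilon}] \lesssim (1 + T^{1/2}\|G\|_{-1})T^{1/2}/m$, and sending $m \to \infty$ gives the claimed bound. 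The delicate point is justifying the convergence of this iterative series in the appropriate sense and that its sum represents $R^m$; this should follow from Kipnis–Varadhan-type $L^1$ estimates on the iterated residuals analogous to those invoked in the proof of Lemma~\ref{lem:uniform-tightness} (\emph{viz.}\ the appendix's iterated bound).
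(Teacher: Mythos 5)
Your use of the forward--backward decomposition \eqref{eq:fb-decom-area}, Proposition~\ref{prop:area-variation} for the stochastic integrals, and a product bound for the bilinear term all match the paper's proof. But your handling of the residual term
\[
R^\Psi_{s,t} := \int_s^t \int_s^{r_1} G(X_{r_2})\,\dd r_2\,\bigl(H(X_{r_1}) - \mathcal{L}_S\Psi(X_{r_1})\bigr)\dd r_1
\]
diverges from the paper's, and as written it has a gap.

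The issue is which norm of the residual $H - \mathcal{L}_S\Psi$ controls the $p/2$-variation of $R^\Psi$. The only workable pointwise bound for a single increment $R^\Psi_{s,t}$ is
\[
|R^\Psi_{s,t}| \;\lesssim\; \sup_{r\in[s,t]}\Bigl|\int_0^r G(X_{r_2})\,\dd r_2\Bigr|\;\int_s^t \bigl|H(X_{r_1}) - \mathcal{L}_S\Psi(X_{r_1})\bigr|\,\dd r_1,
\]
which after Cauchy--Schwarz contributes $T^{1/2}\|G\|_{-1}\cdot T\,\|H - \mathcal{L}_S\Psi\|_{L^2(\pi)}$ to $\mathbb{E}[\|R^\Psi\|_{p/2}^{1-\varepsilon}]$. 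So what you need to make small is the \emph{$L^2(\pi)$-norm} of the residual, not its $\mathcal{H}^{-1}$-norm. Choosing $\Phi_m\in\mathcal{C}$ by density of $\mathcal{L}_S\mathcal{C}$ in $\mathcal{H}^{-1}$ gives you $\|H - \mathcal{L}_S\Phi_m\|_{-1} < 1/m$ with no control whatsoever on $\|H - \mathcal{L}_S\Phi_m\|_{L^2(\pi)}$. The paper avoids this issue by establishing the bound $\mathbb{E}[\|A\|_{p/2}^{1-\varepsilon}]\lesssim (1 + T^{1/2}\|G\|_{-1})(1 + T^{1/2}\|\Psi\|_1 + T\|H - \mathcal{L}_S\Psi\|_{L^2(\pi)})$ for arbitrary $\Psi$ and then choosing $\Psi = \Phi^H_\lambda$, the solution to the resolvent equation $(\lambda - \mathcal{L}_S)\Phi^H_\lambda = -H$: this makes $H - \mathcal{L}_S\Phi^H_\lambda = -\lambda\Phi^H_\lambda$, whose $L^2(\pi)$-norm is $\leqslant\sqrt\lambda\cdot\sqrt\lambda\|\Phi^H_\lambda\|_{L^2(\pi)}\lesssim\sqrt\lambda\|H\|_{-1}\to 0$, while $\|\Phi^H_\lambda\|_1\lesssim\|H\|_{-1}$ stays bounded. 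No iteration is needed.

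Your iterative scheme could in principle be rescued, but not via the route you sketch. The appendix's Lemma~\ref{lem:iterated-kv} bounds $\mathbb{E}[\sup_{t\leqslant T}|\cdot|]$ of the iterated residual in terms of $\|\cdot\|_{-1}$, hence shows the $K$-th residual $R^{m,K}$ tends to zero \emph{uniformly in $L^1$}, not in $p/2$-variation. To conclude $\|R^m\|_{p/2}\leqslant\sum_k \|B_k^m\|_{p/2}$ from this you would additionally need to pass to an a.s.\ convergent subsequence and invoke lower semi-continuity of $p$-variation under pointwise convergence. That step is not present in your argument, and it is precisely where the substance lies; ``should follow from KV-type $L^1$ estimates'' does not address it. The resolvent choice is both simpler and closes the bound in one stroke, which is why the paper uses it.
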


\begin{proof}
  Lemma~\ref{lem:fb-decomp} shows that
\begin{align}\nonumber
    A_{s, t} & = \frac{1}{2} \int_s^t \int_s^{r_1} G (X_{r_2}) \dd r_2
    \dd M^\Psi_{r_1} - \frac{1}{2} \int_{T - s}^{T - t} \int_{T - r_1}^t G
    (X_{r_2}) \dd r_2 \dd \hat{M}^\Psi_{r_1} + \frac{1}{2} \int_s^t G (X_r)
    \dd r (\hat{M}^\Psi_{T - s} - \hat{M}^\Psi_{T - t})\\ \label{eq:p-var-iterated-kv-pr1}
    & \quad + \int_s^t \int_s^{r_1} G (X_{r_2}) \dd r_2 (H (X_{r_1})
    -\mathcal{L}_S \Psi (X_{r_1})) \dd r_1 .
  \end{align}
  The first two terms on the right hand side will be controlled with
  Proposition~\ref{prop:area-variation} and Lemma~\ref{lem:kv-p-var}. The third term of~\eqref{eq:p-var-iterated-kv-pr1} is bounded by
  \begin{equation}
    \label{eq:p-var-iterated-kv-pr2} \left| \frac{1}{2} \int_s^t G (X_r)
    \dd r (\hat{M}^\Psi_{T - s} - \hat{M}^\Psi_{T - t}) \right| \lesssim \left\|
    \int_0^{\cdot} G (X_r) \dd r \right\|_{p, [s, t]} \| \hat{M}^\Psi \|_{p,
    [T - t, T - s]},
  \end{equation}
  and the fourth term by
\begin{equation}\label{eq:p-var-iterated-kv-pr3}
\begin{aligned}
    & \left| \int_s^t \int_s^{r_1} G (X_{r_2}) \dd r_2 (H (X_{r_1})
    -\mathcal{L}_S \Psi (X_{r_1})) \dd r_1 \right|\\
    & \lesssim \sup_{r \in [s, t]} \left| \int_0^r G (X_{r_2}) \dd r_2
    \right| \int_s^t | H (X_{r_1}) -\mathcal{L}_S \Psi (X_{r_1}) | \dd
    r_1 .
\end{aligned}
\end{equation}
	{Recall also L\'epingle's $p$-variation Burkholder-Davis-Gundy inequality, see Theorem~\ref{thm:lepingle BDG}, and note that $\bb E[[M^\Psi]_T] = \bb E[\langle M^\Psi\rangle_T]$ which can be easily seen by stopping the local martingale $[M^\Psi] - \langle M^\Psi\rangle$ and then applying monotone convergence.} So by Proposition~\ref{prop:area-variation} together with \eqref{eq:p-var-iterated-kv-pr1}-\eqref{eq:p-var-iterated-kv-pr3} we obtain
\begin{align*}
    \mathbb{E} [\| A \|_{p / 2, [0, T]}^{1 - \varepsilon}] & \lesssim \left( 1
    +\mathbb{E} \left[ \left\| \int_0^{\cdot} G (X_r) \dd r \right\|_{p,
    [0, T]}^2 \right]^{1 / 2} \right) (1 + |\mathbb{E} [\langle M^\Psi
    \rangle_T]|^{1 / 2} +|\mathbb{E}[\langle \hat{M}^\Psi \rangle_T]|^{1 / 2})\\
    & \quad +\mathbb{E} \left[ \left\| \int_0^{\cdot} G (X_r) \dd r
    \right\|_{p, [0, T]}^2 \right]^{(1 - \varepsilon) / 2} \mathbb{E} [\|
    \hat{M}^\Psi \|_{p, [0, T]}^2]^{(1-\varepsilon) / 2}\\
    & \quad +\mathbb{E} \left[ \sup_{r \in [0, T]} \left| \int_0^r G
    (X_{r_2}) \dd r_2 \right|^2 \right]^{(1 - \varepsilon) / 2} \mathbb{E}
    \left[ \left( \int_0^T | H (X_{r_1}) -\mathcal{L}_S \Psi (X_{r_1}) |
    \dd r_1 \right)^2 \right]^{(1-\varepsilon) / 2}\\
    & \lesssim \left( 1 +\mathbb{E} \left[ \left\| \int_0^{\cdot} G (X_r)
    \dd r \right\|_{p, [0, T]}^2 \right]^{1 / 2} \right) (1 + T^{1 / 2} \|
    \Psi \|_1 + T \| H -\mathcal{L}_S \Psi \|_{L^2 (\pi)})\\
    & \lesssim (1 + T^{1 / 2} \| G \|_{- 1}) (1 + T^{1 / 2} \| \Psi \|_1 +
    T \| H -\mathcal{L}_S \Psi \|_{L^2 (\pi)}),
  \end{align*}
  where the last step follows from Lemma~\ref{lem:kv-p-var}. Now we take
  $\Psi = \Phi^H_{\lambda}$ as the solution to the Poisson equation
  $(\lambda -\mathcal{L}_S) \Phi^H_{\lambda} = - H$. Note that in general
  $\Phi^H_{\lambda} \nin \mathcal{C}$, but we can approximate
  $\Phi^H_{\lambda}$ with functions in $\mathcal{C}$ and get the same
  estimate. Then standard estimates for the solution of the resolvent
  equation, see eq. (2.15) in {\cite{Komorowski2012}}, give $\|
  \Phi^H_{\lambda} \|_1 + \sqrt{\lambda} \| \Phi_{\lambda}^H \|_{L^2 (\pi)}
  \lesssim \| H \|_{- 1}$, and since $H -\mathcal{L}_S \Phi^H_{\lambda} =
  \lambda \Phi^H_{\lambda}$we can send $\lambda \rightarrow 0$ to deduce the
  claimed estimate.
\end{proof}

\begin{corollary}
  The process $(Z^n, \mathbb{Z}^n)$ is tight in the $p$-variation topology on $C (\mathbb{R}_+, \mathbb{R}^d \oplus \mathbb{R}^{d \otimes
  d})$.
\end{corollary}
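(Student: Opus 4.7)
The plan is to combine the uniform tightness from Lemma~\ref{lem:uniform-tightness} with tightness of the $p_0$-variation seminorms for some $p_0 \in (2, p)$: then, applied along any subsequence converging in the uniform topology, Lemma~\ref{lem:rp-conv} upgrades that convergence to the $p$-variation topology and yields tightness of $(Z^n, \mathbb{Z}^n)$ in the $p$-variation topology on $C(\mathbb{R}_+, \bbR^d \oplus \bbR^{d \otimes d})$. So on each fixed interval $[0,T]$ it suffices to prove tightness of the real-valued random variables $\|Z^n\|_{p_0,[0,T]}$ and $\|\mathbb{Z}^n\|_{p_0/2,[0,T]}$.

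For the first level, Lemma~\ref{lem:kv-p-var} directly yields $\bbE[\|Z^n\|_{p_0, [0,T]}^2] \lesssim T\|F\|_{-1}^2$, and tightness of $(\|Z^n\|_{p_0,[0,T]})_n$ follows by Markov's inequality.

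For the second level I reduce to Corollary~\ref{cor:p-var-iterated-kv} via a time change. Setting $\tilde X^n_r := X_{n r}$ and changing variables $u = n r_2$ in the defining double integral of $\mathbb{Z}^n_{s,t}$ gives
\[
  \mathbb{Z}^n_{s,t} \;=\; \int_s^t \!\int_s^{r_1} \bigl(\sqrt n F\bigr)(\tilde X^n_{r_2})\,\dd r_2 \otimes \bigl(\sqrt n F\bigr)(\tilde X^n_{r_1})\,\dd r_1.
\]
The time-changed process $\tilde X^n$ is Markov with the same stationary measure $\pi$ and generator $n\mathcal L$, so it satisfies the Kipnis--Varadhan hypotheses. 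Because the $\mathcal H^{-1}$-norm associated to $n\mathcal L$ is $n^{-1/2}$ times the original norm, one has $\|\sqrt n F\|_{-1}^{(n)} = \|F\|_{-1}$, and Corollary~\ref{cor:p-var-iterated-kv} applied to $\tilde X^n$ with $G = H = \sqrt n F$ delivers
\[
  \bbE\!\left[\|\mathbb{Z}^n\|_{p_0/2,[0,T]}^{1-\varepsilon}\right] \;\lesssim\; \bigl(1 + T^{1/2}\|F\|_{-1}\bigr)^2
\]
uniformly in $n$, whence tightness of $(\|\mathbb{Z}^n\|_{p_0/2,[0,T]})_n$ follows again by Markov's inequality.

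The main point to check carefully is this scaling step: one must verify that the $\sqrt n$ factor in the integrand is exactly compensated by the scaling of the $\mathcal H^{-1}$-norm under $t \mapsto n t$, so that Corollary~\ref{cor:p-var-iterated-kv} produces a bound independent of $n$. All constants there depend only on $p_0$ and $\varepsilon$, so the transfer to the rescaled process introduces no further $n$-dependence, and the proof is completed by invoking Lemma~\ref{lem:rp-conv} as in the opening paragraph.
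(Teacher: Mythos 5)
Your proof is correct and essentially the same as the paper's: both reduce to Corollary~\ref{cor:p-var-iterated-kv} and exploit the diffusive scaling to obtain an $n$-uniform bound $(1+T^{1/2}\|F\|_{-1})^2$ on $\mathbb{E}[\|\mathbb{Z}^n\|_{p/2,[0,T]}^{1-\varepsilon}]$, with the first level handled by Lemma~\ref{lem:kv-p-var}. The only (cosmetic) difference is that the paper applies the corollary to the \emph{original} process on the interval $[0,nT]$ with $G=H=n^{-1/2}F$, using the identity $\mathbb{Z}^n_{s,t}=A_{ns,nt}$, which sidesteps any need to change the generator; you instead time-change to $\tilde X^n$ with generator $n\mathcal{L}$ on $[0,T]$ with $G=H=\sqrt n F$, which then obliges you to check that the rescaled process fits the framework and to verify the scaling $\|\cdot\|_{-1}^{(n)}=n^{-1/2}\|\cdot\|_{-1}$ — all correct, just slightly more bookkeeping.
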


\begin{proof}
  It remains to show that $(\| \mathbb{Z}^n \|_{p / 2, [0, T]})_n$ is tight
  for all $T > 0$, for which it suffices that $\mathbb{E} [\| \mathbb{Z}^n \|_{p
  / 2, [0, T]}^{1 - \varepsilon}] \leqslant C$ for all $n$. But this follows
  from Corollary~\ref{cor:p-var-iterated-kv}: We set $G = H = n^{- 1 / 2} F$
  and replace $T$ with $n T$ to obtain
  \[ \mathbb{E} [\| \mathbb{Z}^n \|_{p / 2, [0, T]}^{1 - \varepsilon}]
     \lesssim (1 + (n T)^{1 / 2} \| n^{- 1 / 2} F \|_{- 1}) (1 + (n T)^{1 / 2}
     \| n^{- 1 / 2} F \|_{- 1}) = (1 + T^{1 / 2} \| F \|_{- 1})^2 . \]

\end{proof}

\subsection{Identification of the limit}

To prove tightness we worked with the forward-backward decomposition of
Lemma~\ref{lem:fb-decomp}. But since the process $\hat{M}^\Psi$ from that lemma
is only a martingale in the backward filtration, this decomposition is not
useful for identifying the limit. So here we work instead with the following
decomposition based on the resolvent equation:

\begin{lemma}
  \label{lem:resolvent-decomposition}For $\lambda > 0$ we write
  $\Phi_{\lambda}$ for the solution of the resolvent equation $(\lambda
  -\mathcal{L}) \Phi_{\lambda} = F$. Then
  \begin{equation}
    \lambda \| \Phi_{\lambda} \|_{L^2 (\pi)}^2 + \| \Phi_{\lambda} \|_1^2
    \leqslant \| F \|_{- 1}^2
  \end{equation}
  and there exists a martingale $M^{\lambda}$ with $M^{\lambda}_0 = 0$ and
  with $\mathbb{E} [\langle M^{\lambda} \rangle_t] = 2 E_{\pi} [\Phi_{\lambda}
  \otimes (-\mathcal{L}_S) \Phi_{\lambda}] t$, such that
  \[ \int_0^t F (X_s) \dd s = \Phi_{\lambda} (X_0) - \Phi_{\lambda} (X_t) +
     \int_0^t \lambda \Phi_{\lambda} (X_s) \dd s + M^{\lambda}_t
     \backassign R^{\lambda}_t + M^{\lambda}_t . \]
  We write $M^{\lambda, n}_t \assign n^{- 1 / 2} M^{\lambda}_{n t}$ and
  $R^{\lambda, n}_t \assign n^{- 1 / 2} R^{\lambda}_{n t}$.
\end{lemma}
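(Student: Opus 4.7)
My plan has three independent pieces, matching the three claims of the lemma.

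First, I would obtain the energy estimate by pairing the resolvent equation with $\Phi_\lambda$ in $L^2(\pi)$. Since $(\lambda - \mc L)\Phi_\lambda = F$, taking the inner product against $\Phi_\lambda$ gives
\[
\lambda \|\Phi_\lambda\|_{L^2(\pi)}^2 + E_\pi[\Phi_\lambda (-\mc L)\Phi_\lambda] = E_\pi[F \Phi_\lambda],
\]
and since $E_\pi[\Phi_\lambda(-\mc L)\Phi_\lambda] = E_\pi[\Phi_\lambda(-\mc L_S)\Phi_\lambda] = \|\Phi_\lambda\|_1^2$, the left-hand side equals $\lambda\|\Phi_\lambda\|^2_{L^2(\pi)} + \|\Phi_\lambda\|_1^2$. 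By the duality definition of $\|\cdot\|_{-1}$ the right-hand side is bounded by $\|F\|_{-1}\|\Phi_\lambda\|_1$. Dropping the non-negative $\lambda$-term gives $\|\Phi_\lambda\|_1 \leq \|F\|_{-1}$, which plugged back in yields the asserted bound $\lambda\|\Phi_\lambda\|_{L^2(\pi)}^2 + \|\Phi_\lambda\|_1^2 \leq \|F\|_{-1}^2$. Strictly speaking $\Phi_\lambda$ might not lie in $\mc C$, but since $\mc C$ is a core for $\mc L$ we may approximate it by core elements and pass to the limit; the duality formula for $\|\cdot\|_{-1}$ extends to $\mc H^1$ by density.

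Second, for the martingale decomposition I would apply Dynkin's formula to the component functions of $\Phi_\lambda$ (which is in the domain of $\mc L$ by construction of the resolvent), yielding the identity
\[
\Phi_\lambda(X_t) - \Phi_\lambda(X_0) - \int_0^t \mc L \Phi_\lambda(X_s)\,\dd s = M^\lambda_t
\]
with $M^\lambda$ a c\`adl\`ag local martingale vanishing at $0$. Substituting $\mc L \Phi_\lambda = \lambda\Phi_\lambda - F$ and rearranging gives exactly the displayed decomposition for $\int_0^t F(X_s)\,\dd s$ in terms of $R^\lambda_t$ and $M^\lambda_t$.

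Third, for the quadratic variation identity I would use the carr\'e du champ: applying Dynkin to the matrix-valued process $\Phi_\lambda\otimes \Phi_\lambda$ (componentwise, noting that the components are scalar functions to which the standard argument applies) gives that
\[
\langle M^\lambda\rangle_t = \int_0^t \bigl(\mc L(\Phi_\lambda \otimes \Phi_\lambda) - \Phi_\lambda \otimes \mc L\Phi_\lambda - (\mc L\Phi_\lambda)\otimes \Phi_\lambda\bigr)(X_s)\,\dd s.
\]
Taking expectations under the stationary measure $\pi$, the term $E_\pi[\mc L(\Phi_\lambda \otimes \Phi_\lambda)]$ vanishes by invariance, leaving $\bb E[\langle M^\lambda\rangle_t] = -2t\, E_\pi[\Phi_\lambda \otimes \mc L \Phi_\lambda] = 2t\, E_\pi[\Phi_\lambda \otimes (-\mc L_S)\Phi_\lambda]$. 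The main technical obstacle is exactly this step, since $\Phi_\lambda \otimes \Phi_\lambda$ need not lie in the domain of $\mc L$: as in Lemma~\ref{lem:fb-decomp} and as indicated on p.~35 of \cite{Komorowski2012}, I would circumvent this by first approximating $\Phi_\lambda$ by elements of the core $\mc C$ (for which the identity holds by Dynkin's formula), then passing to the limit using the $\mc H^1$-control on $\Phi_\lambda$ together with It\^o isometry for the martingales.
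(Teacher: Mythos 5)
Your proof follows the same route as the paper's, which is itself only a one-line sketch: Dynkin's formula for the decomposition and for the quadratic variation, with the approximation argument from p.~35 of \cite{Komorowski2012} to handle the fact that $\Phi_\lambda\otimes\Phi_\lambda$ need not be in $\operatorname{dom}(\mathcal L)$. Your explicit derivation of the energy estimate by pairing the resolvent equation with $\Phi_\lambda$ is also the standard one (it is eq.~(2.15) in \cite{Komorowski2012}), and it is a reasonable addition since the lemma states the inequality as part of its conclusion.

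There is, however, one computational slip in the quadratic-variation step that is worth flagging precisely because it touches on the non-reversibility at the heart of the paper. After integrating the carr\'e du champ against $\pi$ and using invariance to kill $E_\pi[\mathcal L(\Phi_\lambda\otimes\Phi_\lambda)]$, the correct intermediate expression is
\[
\mathbb E[\langle M^\lambda\rangle_t] \;=\; -t\,E_\pi\!\left[\Phi_\lambda\otimes\mathcal L\Phi_\lambda + (\mathcal L\Phi_\lambda)\otimes\Phi_\lambda\right],
\]
and the two summands are \emph{not} equal in general: moving the operator in the second term yields $E_\pi[\Phi_\lambda\otimes\mathcal L^*\Phi_\lambda]$, not $E_\pi[\Phi_\lambda\otimes\mathcal L\Phi_\lambda]$. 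Your intermediate formula $-2t\,E_\pi[\Phi_\lambda\otimes\mathcal L\Phi_\lambda]$ is therefore wrong when $\mathcal L$ is non-symmetric (it is not even a symmetric matrix, so it cannot be a quadratic covariation); it differs from the true value by $2t\,E_\pi[\Phi_\lambda\otimes\mathcal L_A\Phi_\lambda]$, which is essentially $2t\Gamma$ and exactly the quantity the paper is after. The correct chain is $-t\,E_\pi[\Phi_\lambda\otimes(\mathcal L+\mathcal L^*)\Phi_\lambda] = 2t\,E_\pi[\Phi_\lambda\otimes(-\mathcal L_S)\Phi_\lambda]$, so your final line is right but the equality preceding it is not, and the two errors cancel only coincidentally. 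Everything else (Dynkin for the first level, approximation in $\mathcal H^1$ and $L^2(\pi)$) is fine and matches the paper.
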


\begin{proof}
  This formally follows by applying Dynkin's formula to $\Phi_{\lambda}$, {and to make it rigorous if $\Phi_\lambda\otimes \Phi_\lambda \nin \operatorname{dom}(\mc L)$ one can use an approximation argument (see p.35 of \cite{Komorowski2012}).}
\end{proof}

\begin{lemma}
  \label{lem:lambda-equal-0}Assume~{\eqref{eq:additive-rp-assumption-1}}. Then
  there exist processes $R^n, M^n \in D (\mathbb{R}_+, \mathbb{R}^d)$ such
  that for all $T > 0$ and $n \in \mathbb{N}$
  \[ \lim_{\lambda \rightarrow 0} \big\{ \mathbb{E} \big[\sup_{t \leqslant T} |
     M^n_t - M_t^{\lambda, n} |^2\big] +\mathbb{E} \big[\sup_{t \leqslant T} | R^n_t -
     R_t^{\lambda, n} |^2\big] \big\} = 0. \]
  Moreover, $M^n$ is a martingale with $\mathbb{E} [\langle M^n \rangle_t] = 2
  t \lim_{\lambda \rightarrow 0} E_{\pi} [\Phi_{\lambda} \otimes
  (-\mathcal{L}_S) \Phi_{\lambda}]$.
\end{lemma}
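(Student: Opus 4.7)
The plan is to reduce everything to the Cauchy property of $(M^{\lambda,n})$ by exploiting the defining identity of $R^{\lambda,n}$. Rewriting Lemma~\ref{lem:resolvent-decomposition} gives $Z^n_t = M^{\lambda,n}_t + R^{\lambda,n}_t$, and since the left hand side does not depend on $\lambda$, one obtains the crucial identity
\[
R^{\lambda,n}_t - R^{\mu,n}_t = -(M^{\lambda,n}_t - M^{\mu,n}_t),\qquad \lambda,\mu > 0.
\]
Hence convergence of the $R^{\lambda,n}$ is equivalent to convergence of the $M^{\lambda,n}$, and it suffices to work with the martingale part.

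Next I would show that $(M^{\lambda,n})_{\lambda > 0}$ is Cauchy in $L^2(\Omega; D([0,T], \mathbb{R}^d))$ with respect to the uniform norm as $\lambda \to 0$. By linearity of Dynkin's formula, $M^{\lambda,n} - M^{\mu,n}$ is the rescaled Dynkin martingale associated with $\Phi_\lambda - \Phi_\mu$. Thus Doob's $L^2$ inequality (applied coordinate-wise) together with the quadratic variation formula in Lemma~\ref{lem:resolvent-decomposition} yields
\[
\mathbb{E}\Bigl[\sup_{t \le T} |M^{\lambda,n}_t - M^{\mu,n}_t|^2\Bigr] \;\lesssim\; \mathbb{E}[|M^{\lambda,n}_T - M^{\mu,n}_T|^2] \;=\; 2T\, \mathrm{tr}\, E_\pi\bigl[(\Phi_\lambda - \Phi_\mu)\otimes(-\mathcal{L}_S)(\Phi_\lambda - \Phi_\mu)\bigr],
\]
which is a fixed multiple of $T\sum_k \|\Phi_\lambda^{(k)} - \Phi_\mu^{(k)}\|_1^2$, and this tends to $0$ as $\lambda, \mu \to 0$ because assumption~\eqref{eq:additive-rp-assumption-1} implies that $\Phi_\lambda$ is Cauchy in $\mathcal{H}^1(\mathbb{R}^d)$.

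I would then define $M^n$ as the limit. To obtain a c\`adl\`ag version, extract a subsequence $\lambda_k \downarrow 0$ along which the convergence is uniform on $[0,T]$ almost surely (standard Borel-Cantelli trick on $L^2$-Cauchy sequences); the resulting $M^n$ lies in $D([0,T], \mathbb{R}^d)$, and the whole family $M^{\lambda,n}$ converges to it uniformly in $L^2$. Set $R^n := Z^n - M^n$; then the first claim of the lemma follows. The martingale property of $M^n$ is inherited by the $L^2$ stability of conditional expectations. For the quadratic variation, passing $L^2$-limits in $\mathbb{E}[(M^{\lambda,n}_t - M^{\lambda,n}_s)(M^{\lambda,n}_t - M^{\lambda,n}_s)^\top] = 2(t-s)E_\pi[\Phi_\lambda \otimes (-\mathcal{L}_S)\Phi_\lambda]$ gives the formula for $\mathbb{E}[\langle M^n\rangle_t]$, where existence of the limit $\lim_{\lambda \to 0} E_\pi[\Phi_\lambda \otimes (-\mathcal{L}_S)\Phi_\lambda] = \langle \Phi, \otimes\, \Phi\rangle_1$ is again a direct consequence of $\Phi_\lambda \to \Phi$ in $\mathcal{H}^1$.

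The only mildly subtle point is ensuring the limit is a c\`adl\`ag process, not merely an $L^2$-random-variable-valued map in $t$, but this is dealt with by the subsequence extraction above. Everything else is a routine assembly of Doob's inequality, the $\mathcal{H}^1$ Cauchy property granted by~\eqref{eq:additive-rp-assumption-1}, and the linearity of the Dynkin martingale in $\Phi_\lambda$.
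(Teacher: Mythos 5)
Your proof is correct and fills in exactly what the paper outsources to Lemma~2.9 and (2.26) of \cite{Komorowski2012}: reduce to the Cauchy property of $(M^{\lambda,n})$ via $R^{\lambda,n}-R^{\mu,n}=-(M^{\lambda,n}-M^{\mu,n})$ (since $Z^n$ is $\lambda$-free), control it by Doob's $L^2$ maximal inequality together with the quadratic-variation formula for the Dynkin martingale of $\Phi_\lambda-\Phi_\mu$ and the $\mathcal H^1$-convergence from~\eqref{eq:additive-rp-assumption-1}, then pass the $L^2$-limit to get the martingale property and $\mathbb E[\langle M^n\rangle_t]$. This is the standard Kipnis--Varadhan argument and essentially the same approach as the cited source.
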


\begin{proof}
  This is all shown in~{\cite{Komorowski2012}}, see Lemma~2.9 and (2.26)
  therein.
\end{proof}

The following corollary completes the proof of Theorem~\ref{thm:additive-rp}:

\begin{corollary}
  \label{cor:additive-rp}Under the assumptions of
  Theorem~\ref{thm:additive-rp} the process $(Z^n, \mathbb{Z}^n)$ converges in
  distribution in the $p$-variation topology on $C (\mathbb{R}_+,
  \mathbb{R}^d \oplus \mathbb{R}^{d \otimes d})$ to
  \begin{equation}
    \left( B_t, \int_0^t B_s \otimes \circ \dd B_s + \Gamma t \right)_{t \geqslant 0},
  \end{equation}
  where $B$ is a $d$-dimensional Brownian motion with covariance
  \[ 2 t \lim_{\lambda \rightarrow 0} E_{\pi} [\Phi_{\lambda} \otimes
     (-\mathcal{L}_S) \Phi_{\lambda}] = 2 t \lim_{\lambda \rightarrow 0}
     \langle \Phi_{\lambda}, \otimes \Phi_{\lambda} \rangle_1, \]
  and where
  \[ \Gamma = \lim_{\lambda \rightarrow 0} E_{\pi} [\Phi_{\lambda} \otimes
     \mathcal{L}_A \Phi_{\lambda}] . \]
\end{corollary}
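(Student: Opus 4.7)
By the $p$-variation tightness established in the previous corollary and Lemma~\ref{lem:rp-conv}, it suffices to identify the limit of $(Z^n, \mathbb{Z}^n)$ in the (uniform) Skorohod topology, which coincides with the uniform topology on the support of the continuous candidate limit. Since $Z^n \to B$ already by Lemma~\ref{lem:uniform-tightness-for-path}, the remaining task is to identify the limit of $\mathbb{Z}^n_{0, \cdot}$.

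Following the heuristic sketched after Theorem~\ref{thm:additive-rp}, fix $\lambda > 0$ and use the resolvent decomposition (Lemma~\ref{lem:resolvent-decomposition}) to write $Z^n = R^{\lambda, n} + M^{\lambda, n}$, and split correspondingly
\[
\mathbb{Z}^n_{0, t} = \int_0^t R^{\lambda, n}_s \otimes \dd Z^n_s + \int_0^t M^{\lambda, n}_s \otimes \dd Z^n_s.
\]
For the first integral, substitute the explicit form of $R^{\lambda, n}$ and $\dd Z^n_s = \sqrt{n}\, F(X_{ns})\dd s$: the term involving $\Phi_\lambda(X_0)$ vanishes because $\int_0^t F(X_{ns})\dd s = Z^n_t/\sqrt{n} \to 0$; the diagonal term $-\int_0^t \Phi_\lambda(X_{ns}) \otimes F(X_{ns})\dd s$ converges by Birkhoff's ergodic theorem to $-t E_\pi[\Phi_\lambda \otimes F]$; and the resolvent correction $\lambda n \int_0^t \int_0^s \Phi_\lambda(X_{nr})\dd r \otimes F(X_{ns})\dd s$ is controlled via Corollary~\ref{cor:p-var-iterated-kv} together with the estimate $\sqrt{\lambda}\|\Phi_\lambda\|_\pi \to 0$ from~\eqref{eq:additive-rp-assumption-1}. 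Unfolding $F = (\lambda - \mc L)\Phi_\lambda$, using $\|\Phi_\lambda - \Phi\|_1 \to 0$ and the identity $\langle B, B\rangle_t = 2t \langle\Phi, \otimes\Phi\rangle_1$, the $\lambda \to 0$ limit of this piece equals $-\tfrac12\langle B, B\rangle_t + t\Gamma$, with $\Gamma = \lim_{\lambda \to 0} E_\pi[\Phi_\lambda \otimes \mc L_A \Phi_\lambda]$ emerging as a by-product.

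For the second integral, an integration by parts (with no bracket since $Z^n$ is continuous of finite variation) yields
\[
\int_0^t M^{\lambda, n}_s \otimes \dd Z^n_s = M^{\lambda, n}_t \otimes Z^n_t - \int_0^t \dd M^{\lambda, n}_s \otimes Z^n_{s-}.
\]
Since $M^{\lambda, n}$ satisfies the UCV condition (Lemma~\ref{lem:fb-decomp}) and, jointly with $Z^n$, converges to $(B^\lambda, B)$ with $B^\lambda \to B$ as $\lambda \to 0$, Theorem~\ref{lem:UCV} (Kurtz--Protter) gives $\int_0^t \dd M^{\lambda, n}_s \otimes Z^n_{s-} \to \int_0^t \dd B_s \otimes B_s$ in the iterated limit. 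By the It\^o product rule $B_t \otimes B_t = \int_0^t B_s \otimes \dd B_s + \int_0^t \dd B_s \otimes B_s + \langle B, B\rangle_t$, the martingale piece tends to $\int_0^t B_s \otimes \dd B_s + \langle B, B\rangle_t$. Summing both contributions and recognizing $\int_0^t B_s \otimes \dd B_s + \tfrac12\langle B, B\rangle_t = \int_0^t B_s \otimes \circ \dd B_s$ yields the claimed limit $\int_0^t B_s \otimes \circ \dd B_s + \Gamma t$. \textbf{Main obstacle:} The principal technical difficulty is the interchange of the limits $n \to \infty$ and $\lambda \to 0$, and in particular controlling the resolvent correction term uniformly enough; this is managed by combining the quantitative bounds from Lemma~\ref{lem:kv-p-var} and Corollary~\ref{cor:p-var-iterated-kv} with Lemma~\ref{lem:lambda-equal-0}, which provides the $\lambda \to 0$ limits $R^n, M^n$ uniformly in $L^2$ for each fixed $n$, possibly via a diagonal sequence $\lambda_n \to 0$.
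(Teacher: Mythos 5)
Your outline shares the algebraic skeleton of the paper's proof: resolvent decomposition, integration by parts, Kurtz--Protter for the martingale piece, the ergodic theorem for the diagonal piece, and the identity $E_\pi[\Phi_\lambda\otimes F]=E_\pi[\Phi_\lambda\otimes(-\mc L)\Phi_\lambda]+\lambda E_\pi[\Phi_\lambda^{\otimes 2}]$ at the end. However the order in which you take the limits is different from the paper, and the step you flag as the ``main obstacle'' is precisely where the argument is incomplete.

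You fix $\lambda>0$, split $Z^n=R^{\lambda,n}+M^{\lambda,n}$, take $n\to\infty$, and only then let $\lambda\to 0$. For fixed $\lambda$ the resolvent correction
$\int_0^t\big(n^{-1/2}\int_0^{ns}\lambda\Phi_\lambda(X_r)\dd r\big)\otimes\sqrt n F(X_{ns})\dd s$
does \emph{not} go to zero as $n\to\infty$: the inner integral $n^{-1/2}\int_0^{ns}\lambda\Phi_\lambda(X_r)\dd r$ is itself a non-degenerate CLT-scaled additive functional, so the whole term converges to a non-trivial stochastic integral. Your proposed control ``via Corollary~\ref{cor:p-var-iterated-kv} together with $\sqrt{\lambda}\|\Phi_\lambda\|_\pi\to 0$'' does not apply directly: Corollary~\ref{cor:p-var-iterated-kv} requires a bound in terms of $\|\lambda\Phi_\lambda\|_{-1}$, not $\sqrt{\lambda}\|\Phi_\lambda\|_\pi$, and the former is neither stated nor established. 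A crude bound via $\|\Phi_\lambda\|_\pi$ alone blows up like $\sqrt n$. Similarly, for fixed $\lambda$ the Kurtz--Protter limit of the martingale piece involves a Brownian motion $B^\lambda\ne B$ (with covariance $2tE_\pi[\Phi_\lambda\otimes(-\mc L_S)\Phi_\lambda]$ and an unspecified cross-covariation with $B$), so the clean identity $B_t\otimes B_t-\int_0^t B_s\otimes\dd B_s=\int_0^t B_s\otimes\dd B_s+\langle B,B\rangle_t$ is not yet available; you would have to track $\langle B^\lambda,B\rangle$ and then argue convergence as $\lambda\to 0$. None of this is impossible, but it is the whole content of the proof, and ``possibly via a diagonal sequence'' leaves it undone.

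The paper resolves both difficulties at once by working with $M^n=\lim_{\lambda\to 0}M^{\lambda,n}$ and $R^n=\lim_{\lambda\to 0}R^{\lambda,n}$ from Lemma~\ref{lem:lambda-equal-0}. Then $(Z^n,M^n,R^n)\to(B,B,0)$ jointly, so the Kurtz--Protter step cleanly gives $\int_0^\cdot M^n_s\otimes\dd Z^n_s\to\int_0^\cdot B_s\otimes\dd B_s+\langle B,B\rangle$ with the \emph{same} $B$ on both slots. For the residual, the paper cannot write $R^n$ explicitly (since $\Phi\in\mc H^1$ need not be in $L^2(\pi)$), so it adds and subtracts $n^{-1/2}\Phi_{n^{-1}}(X_{n\cdot})$: the difference $\int_0^t(R^n_s+n^{-1/2}\Phi_{n^{-1}}(X_{ns}))\otimes\dd Z^n_s$ is shown to vanish using $R^n-R^{n^{-1},n}=M^{n^{-1},n}-M^n$ (controlled by $\|\Phi-\Phi_{n^{-1}}\|_1\to 0$, a genuine diagonal-sequence argument) plus the bound $n^{-1}\|\Phi_{n^{-1}}\|_\pi^2\to 0$ from~\eqref{eq:additive-rp-assumption-1}. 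What remains is $n^{-1}\int_0^{nt}\Phi_{n^{-1}}(X_s)\otimes F(X_s)\dd s$; the paper replaces $\Phi_{n^{-1}}$ by $\Phi_\lambda$ for fixed $\lambda$, with error $T\|\Phi_{n^{-1}}-\Phi_\lambda\|_1\|F\|_{-1}$, applies the ergodic theorem (strengthened to locally uniform convergence via Lemma~\ref{lem:stationary-uniform}), and finally sends $\lambda\to 0$. To make your proposal rigorous you would need essentially this: a quantitative reason why the $\lambda$-dependent terms are uniformly small, and the paper's choice $\lambda=n^{-1}$ is exactly what makes the estimate $\lambda\|\Phi_\lambda\|_\pi^2\to 0$ usable.
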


\begin{proof}
  Let $Z^n = M^n + R^n$ as above. In Theorem~2.32 of {\cite{Komorowski2012}}
  it is shown that both $(M^n)$ and $ (Z^n)$ converge in distribution in the Skorohod topology on $D
  (\mathbb{R}_+, \mathbb{R}^{d})$ to a Brownian motion $B$ with covariance
  $\langle B,B \rangle_t = 2 t \lim_{\lambda \rightarrow 0} E_{\pi}
  [\Phi_{\lambda} \otimes (-\mathcal{L}_S) \Phi_{\lambda}]$. Therefore both
  $Z^n$ and $M^n$ are C-tight, and thus also $R^n$ is C-tight. It is shown in
  Proposition~2.8 of {\cite{Komorowski2012}} that $\mathbb{E} [| R^n_t |^2]
  \rightarrow 0$ for each fixed $t \geqslant 0$, which together with the
  $C$-tightness gives the convergence of $R^n$ to zero in distribution in $C
  (\mathbb{R}_+, \mathbb{R}^d)$ (and thus in probability because the limit is
  deterministic). Since $Z^n = M^n + R^n$, this gives the joint convergence of $(Z^n, M^n,
  R^n)$ in distribution in $C (\mathbb{R}_+, \mathbb{R}^{3 d})$ to $(B, B,
  0)$. By the `moreover' part of Lemma~\ref{lem:lambda-equal-0} $M^n$ satisfies UCV. Consequently, Corollary~\ref{cor:UCV} shows the joint
  convergence
  \[ \left( Z^n, M^n, \int_0^{\cdot} M^n_s \otimes \dd Z^n_s \right)
     \rightarrow \left( B, B, \int_0^{\cdot} B_s \otimes \dd B_s +
     \langle B, B \rangle \right) . \]
  It remains to study the term $\int_0^{\cdot} R^n_s \otimes \dd Z^n_s$.
  We claim that for all $T > 0$
  \begin{equation}
    \label{eq:identifying-limit-pr1} \lim_{n \rightarrow \infty} \mathbb{E}
    \left[ \sup_{t \leqslant T} \left| \int_0^t (R^n_s + n^{- 1 / 2}
    \Phi_{n^{- 1}} (X_{n s})) \otimes \dd Z^n_s \right| \right] = 0.
  \end{equation}
  Indeed, $R^n_s - R^{n^{- 1}, n}_s = M^{n^{- 1}, n}_s - M^n_s$, and since
  $\mathbb{E} [\sup_{t \leqslant T} | M^n_t - M^{n^{- 1}, n}_t |^2] \lesssim
  \| \Phi - \Phi_{n^{- 1}} \|_1^2 \rightarrow 0$ we can apply integration by
  parts together with the Burkholder-Davis-Gundy inequality to show that
  $\mathbb{E} \left[ \sup_{t \leqslant T} \left| \int_0^t (R^n_s - R^{n^{- 1},
  n}_s) \otimes \dd Z^n_s \right| \right] \rightarrow 0$. The remaining
  term involves only the continuous finite variation process $R^{n^{- 1}, n}_s
  + n^{- 1 / 2} \Phi_{n^{- 1}} (X_{n s})$, so that we can apply
  Lemma~\ref{lem:iterated-kv} to obtain
\begin{align*}
    & \limsup_{n \rightarrow \infty} \mathbb{E} \left[ \sup_{t \leqslant T}
    \left| \int_0^t (R^{n^{- 1}, n}_s + n^{- 1 / 2} \Phi_{n^{- 1}} (X_{n s}))
    \otimes \dd Z^n_s \right| \right]\\
    & = \limsup_{n \rightarrow \infty} \mathbb{E} \left[ \sup_{t \leqslant T}
    \left| \frac{1}{\sqrt{n}} \int_0^{n t} (R^{n^{- 1}, n}_{n^{- 1} s} + n^{-
    1 / 2} \Phi_{n^{- 1}} (X_s)) \otimes F (X_s) \dd s \right| \right]\\
    & \lesssim \limsup_{n \rightarrow \infty} \mathbb{E} [\sup_{t \leqslant n
    T} |  R^{n^{- 1}, n}_{n^{- 1} t} + n^{- 1 / 2} \Phi_{n^{- 1}}
    (X_t)) \dd s |^2]^{1 / 2} T^{1 / 2} \| F \|_{- 1} .
  \end{align*}
  To bound the expectation on the right hand side note that
\begin{align*}
    \mathbb{E} [\sup_{t \leqslant T} | R^{n^{- 1}, n}_t + n^{- 1 / 2}
    \Phi_{n^{- 1}} (X_{n t}) |^2] & \lesssim \mathbb{E} [| n^{- 1 / 2}
    \Phi_{n^{- 1}} (X_0) |^2] +\mathbb{E} \left[ \sup_{t \leqslant T} \left|
    n^{- 1 / 2} \int_0^{n t} n^{- 1} \Phi_{n^{- 1}} (X_s) \dd s \right|^2
    \right]\\
    & \lesssim n^{- 1} \| \Phi_{n^{- 1}} \|_{L^2 (\pi)}^2 + T^2 n^{- 1} \|
    \Phi_{n^{- 1}} \|_{L^2 (\pi)}^2\\
    & = (1 + T^2) n^{- 1} \| \Phi_{n^{- 1}} \|_{L^2 (\pi)}^2,
  \end{align*}
  and since according to assumption~{\eqref{eq:additive-rp-assumption-1}} the
  right hand side vanishes for $n \rightarrow \infty$ we
  deduce~{\eqref{eq:identifying-limit-pr1}}. Therefore, it suffices to study
  the limit of $\int_0^t n^{- 1 / 2} \Phi_{n^{- 1}} (X_{n s}) \otimes \dd
  Z^n_s = n^{- 1} \int_0^{n t} \Phi_{n^{- 1}} (X_{n s}) \otimes F (X_{n s})
  \dd s$. Let $\lambda > 0$, then
\begin{align*}
    \mathbb{E} \left[ \sup_{t \leqslant T} \left| n^{- 1} \int_0^{n t}
    (\Phi_{n^{- 1}} (X_{n s}) - \Phi_{\lambda} (X_{n s})) \otimes F (X_{n s})
    \dd s \right| \right] & \leqslant T E_{\pi} [| (\Phi_{n^{- 1}} -
    \Phi_{\lambda}) \otimes F |]\\
    & \leqslant T \| \Phi_{n^{- 1}} - \Phi_{\lambda} \|_1 \| F \|_{- 1},
  \end{align*}
  and by assumption the right hand side converges to $T \| \Phi -
  \Phi_{\lambda} \|_1 \| F \|_{- 1}$, which goes to zero for $\lambda
  \rightarrow 0$. Moreover, by the ergodic theorem the term $n^{- 1} \int_0^{n
  t} \Phi_{\lambda} (X_{n s}) \otimes F (X_{n s}) \dd s$ converges almost
  surely and in $L^1 (\mathbb{P})$ to $t E_{\pi} [\Phi_{\lambda} \otimes F]$.
  By Lemma~\ref{lem:stationary-uniform} in the appendix this convergence is
  even uniform in $t \in [0, T]$ {(to get the required uniform integrability note that
  \[
  	\sup_{t \in [0,T]} \left|n^{- 1} \int_0^{nt} \Phi_{\lambda} (X_{n s}) \otimes F (X_{n s}) \dd s\right| \le n^{-1}\int_0^{nT} | \Phi_{\lambda} (X_{n s}) \otimes F (X_{n s})| \dd s,
  \]
  and the right hand side converges in $L^1$ by the ergodic theorem).}
  Now it suffices to send $\lambda \rightarrow
  0$ to deduce that $\int_0^t R^n_s \otimes \dd Z^n_s$ converges to the
  deterministic limit $- t \lim_{\lambda \rightarrow 0} E_{\pi}
  [\Phi_{\lambda} \otimes F]$ in $C (\mathbb{R}_+, \mathbb{R}^d)$.
  Consequently,
  \begin{align*}
  	 (Z^n, \mathbb{Z}^n) \rightarrow & \left( B, \int_0^t B_s \otimes \dd B_s + \langle B, B \rangle_t - t \lim_{\lambda \rightarrow 0} E_{\pi}
     [\Phi_{\lambda} \otimes F] \right) \\
     = & \left( B, \int_0^t B_s \otimes \circ \dd B_s + \frac12 \langle B, B \rangle_t - t \lim_{\lambda \rightarrow 0} E_{\pi}
     [\Phi_{\lambda} \otimes F] \right),
  \end{align*}
  and finally we have
  \[
  	\lim_{\lambda \rightarrow 0} E_{\pi} [\Phi_{\lambda}
  \otimes F] = \lim_{\lambda \rightarrow 0} E_{\pi} [\Phi_{\lambda} \otimes
  (\lambda -\mathcal{L}) \Phi_{\lambda}] = \lim_{\lambda \rightarrow 0}
  E_{\pi} [\Phi_{\lambda} \otimes (-\mathcal{L}) \Phi_{\lambda}]
  \]
{because
  $\sqrt{\lambda} \Phi_{\lambda} \rightarrow 0$ in $L^2 (\pi)$. The limit on the left hand side exists because $\Phi_\lambda$ converges in $\mathcal H^1$ and $F\in \mathcal H^{-1}$, and thus also the limit on the right hand side exists. Moreover, $\frac12 \langle B,B \rangle_t = t \lim_{\lambda \rightarrow 0} E_{\pi}
  [\Phi_{\lambda} \otimes (-\mathcal{L}_S) \Phi_{\lambda}]$ and since $\mc L - \mc L_S = \mc L_A$ we get the claimed form $\Gamma = \lim_{\lambda \rightarrow 0} E_{\pi}
  [\Phi_{\lambda} \otimes \mc L_A \Phi_{\lambda}]$ (and in particular this limit exists).}
\end{proof}

\section{Applications}\label{sec:applications}

To illustrate the applicability of our results we derive here scaling limits
in the rough path topology for three classes of models, random walks with random
conductances, Ornstein-Uhlenbeck process with divergence free drift, and diffusions with periodic coefficients.

\subsection{Random walks with random conductances}

We place ourselves in the setting of Chapter~3.1 of {\cite{Komorowski2012}} or
{\cite{Mourrat2012}}. Namely, let
\[ \eta = \{ \eta (\{ x, y \}) = \eta (\{ y, x \}) : x, y \in \mathbb{Z}^d, |
   x - y | = 1 \} \]
be a set of numbers with $0 < c \leqslant \eta (\{ x, y \}) \leqslant C$ for
all $x, y$ and let us write $X^{\eta}$ for the continuous time random walk in
$\mathbb{Z}^d$ with $X^{\eta}_0 = 0$ and that jumps from $x$ to $y$ (resp.
from $y$ to $x$) with rate $\eta (\{ x, y \})$. Since the rates are bounded
from above this random walk exists for all times. We interpret $\eta (\{ x, y
\})$ as the \emph{conductance} on the bond $\{ x, y \}$. To simplify
notation we will write
\[ \eta (x, y) = \eta (y, x) = \eta (\{ x, y \}) \]
from now on. We are interested in the situation where $(\eta (\{ x, y \}))_{| x - y | = 1}$ is an
i.i.d. family of random variables (and each $\eta (x, y)$ still
takes values in $[c, C]$).

\subsubsection{Scaling limit for the It\^{o} rough path}

Let us write $\pi$ for the distribution of $\eta$ and write $X^{\eta}_{t -} =
\lim_{s \uparrow t} X^{\eta}_s$ and then
\[ \mathbb{X}^{\eta}_{s, t} = \int_s^t X^{\eta}_{s, r -} \otimes \dd
   X^{\eta}_r . \]
We also define
\[ X^{\eta, n}_t = n^{- 1 / 2} X^{\eta}_{n t}, \qquad \mathbb{X}^{\eta, n}_{s,
   t} = \int_s^t X^{\eta, n}_{s, r -} \otimes \dd X^{\eta, n}_r . \]
Our aim is to show an invariance principle in the rough path topology for
$(X^{\eta, n}, \mathbb{X}^{\eta, n})$ under the \emph{annealed measure}
\[ \int \mathbb{E} [f (X^{\eta})] \pi (\dd \eta) . \]
The corresponding annealed invariance principle for $X^{\eta}$ in the Skorohod
topology is established in Chapter~3.1 of {\cite{Komorowski2012}}. The
approach there is based on writing $X^{\eta}$ as an additive functional of a
certain Markov process plus a martingale, and on applying
Lemma~\ref{lem:uniform-tightness-for-path} to the additive functional. The
Markov process is the ``environment as seen from the walker'': For $x \in
\mathbb{Z}^d$ let us write
\[ \tau_x \eta (y, z) = \eta (y + x, z + x), \]
and then we define
\[ \eta_t \assign \tau_{X^{\eta}_t} \eta, \]
which is a c\`adl\`ag process with values in the compact space $[c, C]^{E^d}$
equipped with the product topology, where $E^d = \{ \{ x, y \} : x, y \in
\mathbb{Z}^d, | x - y | = 1 \}$ are the bonds in $\mathbb{Z}^d$. We write
\[ \mathcal{F}_t = \sigma (X^{\eta}_s \vee \eta : s \leqslant t), \]
so that $(\eta_t)$ is adapted to $(\mathcal{F}_t)$. In the following all
martingales are with respect to $(\mathcal{F}_t)$ and the annealed measure,
unless explicitly stated otherwise.

\begin{lemma}[{\cite{Komorowski2012}}, Lemma~3.1]
  The process $(\eta_t)_{t \geqslant 0}$ is Markovian with respect to
  $(\mathcal{F}_t)$, with generator
  \[ \mathcal{L}F (\eta) = \sum_{\tmscript{\begin{array}{c}
       y \in \mathbb{Z}^d :\\
       | y | = 1
     \end{array}}} \eta (0, y) (F (\tau_y \eta) - F (\eta)) \]
  and with reversible and ergodic invariant distribution $\pi$.
\end{lemma}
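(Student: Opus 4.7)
The plan is to verify the three claims in order: the Markov property together with the stated form of the generator, reversibility of $\pi$ (which subsumes invariance), and then ergodicity.

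For the Markov property and the generator, I would argue directly from the dynamics. Given $\mc F_t$, the walker sits at position $X^\eta_t$ in the environment $\eta$, equivalently at the origin in the shifted environment $\eta_t = \tau_{X^\eta_t}\eta$. For $|y|=1$ the walker jumps to $X^\eta_t + y$ at rate $\eta(X^\eta_t,X^\eta_t+y) = \eta_t(0,y)$, and upon such a jump the environment as seen from the walker becomes $\tau_{X^\eta_t+y}\eta = \tau_y \eta_t$. Thus the infinitesimal transition rates of $\eta_t$ depend only on $\eta_t$ itself, which gives the Markov property in the filtration $(\mc F_t)$, and reading off these rates yields the formula $\mc LF(\eta) = \sum_{|y|=1}\eta(0,y)(F(\tau_y\eta) - F(\eta))$.

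For reversibility of $\pi$, I would compute $E_\pi[G\mc LF]$ directly. Writing
\[
E_\pi[G\mc LF] = \sum_{|y|=1}\bigl( E_\pi[G(\eta)\eta(0,y)F(\tau_y\eta)] - E_\pi[G(\eta)F(\eta)\eta(0,y)] \bigr),
\]
the second piece is manifestly symmetric in $F,G$. For the first, I change variables $\eta \mapsto \tau_{-y}\eta$, which preserves $\pi$ since $\pi$ is a product measure and in particular translation invariant, and observe that $(\tau_{-y}\eta)(0,y) = \eta(-y,0) = \eta(0,-y)$ by symmetry of the conductances under swapping endpoints. Relabeling $y' = -y$ in the sum then produces exactly $E_\pi[F\mc LG]$. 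Hence $E_\pi[G\mc LF] = E_\pi[F\mc LG]$, which says $\mc L$ is self-adjoint on $L^2(\pi)$; invariance $E_\pi[\mc LF] = 0$ follows by taking $G\equiv 1$.

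For ergodicity, I would first exploit reversibility to express the Dirichlet form as
\[
E_\pi[F(-\mc L)F] = \tfrac12 \sum_{|y|=1} E_\pi\bigl[\eta(0,y)\bigl(F(\tau_y\eta) - F(\eta)\bigr)^2\bigr].
\]
If $\mc LF = 0$, then the left-hand side vanishes, and the lower bound $\eta(0,y) \geq c > 0$ forces $F(\tau_y\eta) = F(\eta)$ for $\pi$-a.e.\ $\eta$ and each unit vector $y$. Taking the union of the (countably many) exceptional sets over all $y$ and iterating along lattice paths shows that $F$ is invariant under the full $\bb Z^d$-translation group $\pi$-almost surely. Since $\pi$ is a product measure, it is ergodic under these translations, so $F$ is $\pi$-a.s.\ constant. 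The only mild subtlety is the bookkeeping of null sets in passing from nearest-neighbor invariance to full translation invariance, but this is a standard countable-union argument and poses no real obstacle.
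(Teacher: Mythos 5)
Your proposal is correct, and it is essentially the standard proof. The paper itself does not prove this lemma: it is cited directly from \cite{Komorowski2012} (Lemma~3.1), with only the remark that the Markov property holds with respect to the larger filtration $\mathcal{F}_t = \sigma(\eta, X^\eta_s : s \leqslant t)$ and not just the canonical filtration of $(\eta_t)$, which requires ``a slight modification'' of the cited proof. Your argument addresses exactly that point in the first paragraph by noting that, conditionally on $\mathcal{F}_t$, the residual jump dynamics of the walk at $X^\eta_t$ are governed entirely by the rates $\eta_t(0,y)$, which are measurable functions of $\eta_t$ alone; this is the content of the needed modification. Your reversibility computation via the change of variables $\eta \mapsto \tau_{-y}\eta$, using translation invariance of the product measure $\pi$ and the undirectedness $\eta(x,y)=\eta(y,x)$, is correct, and correctly subsumes invariance by taking $G\equiv 1$. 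Your ergodicity argument is also sound: the generator here is a \emph{bounded} operator on $L^2(\pi)$ (uniformly bounded rates), so there are no domain issues with the Dirichlet form identity $E_\pi[F(-\mathcal{L})F] = \tfrac12\sum_{|y|=1}E_\pi[\eta(0,y)(F(\tau_y\eta)-F(\eta))^2]$, and the uniform ellipticity $\eta(0,y)\geq c>0$ forces nearest-neighbour shift invariance of $F$ $\pi$-a.s., which propagates to the full $\mathbb{Z}^d$-action by the countable-union bookkeeping you describe. Combined with ergodicity (indeed mixing) of the i.i.d.\ product measure under shifts, this gives that $F$ is $\pi$-a.s.\ constant. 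One small remark worth making explicit: the Dirichlet form identity itself again uses the same $\eta\mapsto\tau_{-y}\eta$ change of variables to replace $E_\pi[\eta(0,y)F(\tau_y\eta)^2]$ by $E_\pi[\eta(0,y)F(\eta)^2]$ after relabeling $y\mapsto -y$, so reversibility and the Dirichlet form formula are really the same symmetrization step used twice.
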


In Lemma~3.1 of {\cite{Komorowski2012}} the filtration with respect to which
the Markov property holds is not specified, but (a slight modification of)
their proof shows that we can take $(\mathcal{F}_t)$ and not just the
canonical filtration of $(\eta_t)$.

Let us define the \emph{local drift} $F : [c, C]^{E^d} \rightarrow
\mathbb{R}^d$ by
\[ F (\eta) = \sum_{| y | = 1} y \eta (0, y) . \]
It is shown on p.86 of {\cite{Komorowski2012}} that there exists a c\`adl\`ag
martingale $(N_t)_{t \geqslant 0}$ such that
\begin{equation}\label{eq:rwre-decomposition}
	 X^{\eta}_t = N_t + \int_0^t F (\eta_s) \dd s = : N_t + Z_t,
\end{equation}
and therefore $X^{\eta, n}_t = N^n_t + Z^n_t$ with the obvious definition of
the rescaled processes $N^n$ and $Z^n$. The idea is now to apply the
invariance principle for additive functionals to $Z^n$ and to apply the
martingale central limit theorem to $N^n$. Recall that $(\eta_t)$ is
reversible, so by the discussion in Chapter~2.7.1 in {\cite{Komorowski2012}}
we have $F \in L^2 (\pi) \cap \mathcal{H}^{- 1}$ and the assumptions of
Theorem~\ref{thm:additive-rp} are satisfied. Of course, we also have to
understand the joint convergence of $(N^n, Z^n)$, and for that purpose on p.88
of {\cite{Komorowski2012}} the predictable quadratic covariation between $N^n$
and the martingale $M^{\lambda, n}$ from the decomposition of
Lemma~\ref{lem:resolvent-decomposition} is derived, namely for $a, b \in
\mathbb{R}$
\begin{equation}
  \label{eq:rwre-qv} \langle a N^n + b M^{n,\lambda}, a N^n + b M^{n,\lambda} \rangle_t = \sum_{| y | = 1}
  \frac{1}{n} \int_0^{n t} \eta_s (0, y) (a y + b (\Phi_{\lambda} (\tau_y
  \eta_s) - \Phi_{\lambda} (\eta_s)))^{\otimes 2} \dd s
\end{equation}
A simple adaptation of Theorem 3.2 in {\cite{Komorowski2012}} now leads to the
following:

\begin{lemma}
  \label{lem:random-conductance-skorohod}Under the annealed measure the pair
  $(N^n, Z^n)$ converges in distribution in the Skorohod topology on $D
  (\mathbb{R}_+, \mathbb{R}^{2 d})$ to a $2 d$-dimensional Brownian motion
  $(B^N, B^Z)$ such that for $a, b \in \mathbb{R}$
  \begin{equation}
    \label{eq:random-conductance-limiting-qv} \langle a B^N + b B^Z, a B^N + b
    B^Z \rangle_t = t \lim_{\lambda \rightarrow 0} \sum_{| y | = 1} E_{\pi}
    [\eta (0, y) (a y + b (\Phi_{\lambda} (\tau_y \eta) - \Phi_{\lambda}
    (\eta)))^{\otimes 2}] .
  \end{equation}
  Moreover, the sequence of processes $(N^n)$ satisfies the UCV condition.
\end{lemma}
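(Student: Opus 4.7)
The plan is to adapt Theorem~3.2 of~\cite{Komorowski2012}, which establishes the marginal convergence of $X^{\eta,n}$ in the Skorohod topology, and enhance it to the joint convergence of $(N^n, Z^n)$ with the covariance structure~\eqref{eq:random-conductance-limiting-qv}. The key ingredient is the explicit joint predictable quadratic variation~\eqref{eq:rwre-qv} of $aN^n + bM^{\lambda,n}$ for $a,b \in \mathbb{R}$, which, combined with the Cram\'er-Wold device, reduces the joint convergence to a family of scalar martingale CLTs.

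First I would fix $\lambda > 0$ and apply Lemma~\ref{lem:resolvent-decomposition} to $F(\eta) = \sum_{|y|=1} y\eta(0,y)$ to decompose $Z^n = M^{\lambda,n} + R^{\lambda,n}$. For each $a,b \in \mathbb{R}$ the process $aN^n + bM^{\lambda,n}$ is a c\`adl\`ag martingale with predictable quadratic variation
\[
\frac{1}{n}\int_0^{nt}\sum_{|y|=1}\eta_s(0,y)\big(ay+b(\Phi_\lambda(\tau_y\eta_s)-\Phi_\lambda(\eta_s))\big)^{\otimes 2}\dd s.
\]
Since $\pi$ is ergodic for the Markov process $(\eta_t)$, Birkhoff's ergodic theorem together with the uniform-in-time version of Lemma~\ref{lem:stationary-uniform} shows that this quantity converges almost surely on compacts to $t$ times the $\pi$-expectation of the integrand. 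The jumps of $aN^n + bM^{\lambda,n}$ are uniformly of order $n^{-1/2}$, so the functional martingale CLT delivers weak convergence to a Brownian motion with that covariance. Varying $a,b$ and using the Cram\'er-Wold device componentwise then yields the joint convergence $(N^n, M^{\lambda,n}) \to (B^N, B^{Z,\lambda})$ in the Skorohod topology, with covariance given by the $\lambda$-dependent version of~\eqref{eq:random-conductance-limiting-qv}.

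The next step is to send $\lambda \to 0$ and replace $M^{\lambda,n}$ by $Z^n$. By Lemma~\ref{lem:lambda-equal-0}, the difference $Z^n - M^{\lambda,n}$ becomes negligible in the uniform norm in the iterated limit $\lim_{\lambda \to 0} \limsup_{n \to \infty}$, while the limiting covariance converges to~\eqref{eq:random-conductance-limiting-qv} because $\Phi_\lambda \to \Phi$ in $\mathcal{H}^1$. A Slutsky-type argument, together with a careful handling of the iterated limits as already carried out in~\cite{Komorowski2012} for the marginal case, then yields the joint convergence $(N^n, Z^n) \to (B^N, B^Z)$. The interchange of limits is the most delicate point here, but the required estimates are essentially already in place.

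Finally, the UCV condition for $(N^n)$ follows from a direct computation: the jumps of $N$ coincide with those of $X^\eta$, which are of unit size with total rate bounded by $2dC$, so $[N^n]_T = n^{-1}[N]_{nT} = n^{-1}\sum_{s \le nT}|\Delta X^\eta_s|^2$ has expectation at most $2dCT$ uniformly in $n$, verifying Definition~\ref{def:UCV}.
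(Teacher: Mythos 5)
The proposal is correct and essentially coincides with what the paper intends: the paper's own "proof" is the single sentence pointing to Theorem~3.2 of~\cite{Komorowski2012}, and the steps you lay out (resolvent decomposition $Z^n = M^{\lambda,n} + R^{\lambda,n}$, martingale functional CLT applied via the joint quadratic variation~\eqref{eq:rwre-qv}, uniform-in-$n$ passage $\lambda\to 0$ via Lemma~\ref{lem:lambda-equal-0}, and the jump-counting bound for UCV) are precisely the "simple adaptation" the paper has in mind. One small technical remark: "Cram\'er--Wold device componentwise" is a slightly loose way to invoke what is really the multivariate martingale functional CLT (e.g.\ Jacod--Shiryaev VIII.3.12) applied to the pair $(N^n,M^{\lambda,n})$, since Cram\'er--Wold alone only handles finite-dimensional distributions and tightness must come from the QV bound~\eqref{eq:rwre-qv} — but that is exactly the bound you already have, so the argument goes through.
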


Combining this result with Theorem~\ref{thm:additive-rp}, we easily obtain the
following convergence in rough path topology:

\begin{theorem}\label{thm: conductances ito convergence}
  The process $(X^{\eta, n}, \mathbb{X}^{\eta, n})$ converges in distribution
  in the $p$-variation rough path topology to
  \[
  	\left( B, \left( \int_0^t B_s \otimes \dd B_s + \Gamma t \right)_{t \geqslant 0} \right),
  \]
  where $B$ is a Brownian motion with covariance
  \[ \langle B, B \rangle_t = t \lim_{\lambda \rightarrow 0} \sum_{| y | = 1}
     E_{\pi} [\eta (0, y) (y + (\Phi_{\lambda} (\tau_y \eta) - \Phi_{\lambda}
     (\eta)))^{\otimes 2}], \]
  and where for the unit matrix $I_d$ and the vector $e_1 = (1, 0, \ldots, 0)
  \in \mathbb{Z}^d$
  \[ \Gamma = \frac{1}{2} \langle B, B \rangle_1 - E_{\pi} [\eta (0, e_1)] I_d . \]
\end{theorem}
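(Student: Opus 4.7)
My plan is to leverage the decomposition \eqref{eq:rwre-decomposition} $X^{\eta}_t = N_t + Z_t$ which splits the walk into a martingale part and an additive functional of the reversible Markov process $(\eta_t)$. Rescaling gives $X^{\eta,n} = N^n + Z^n$, and the second-level path expands by bilinearity into four pieces:
\[
\mathbb{X}^{\eta,n}_t = \int_0^t N^n_{s-} \otimes \dd N^n_s + \int_0^t Z^n_{s-} \otimes \dd N^n_s + \int_0^t N^n_{s-} \otimes \dd Z^n_s + \int_0^t Z^n_{s-} \otimes \dd Z^n_s.
\]
Each piece will be handled by a different tool already assembled in the paper, and I will then reassemble them using integration by parts in the limit.

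For tightness in $p$-variation, Theorem~\ref{thm:additive-rp} already controls $(Z^n, \int Z^n\otimes \dd Z^n)$, so it suffices to bound the three remaining terms. L\'epingle's BDG inequality combined with the UCV bound of Lemma~\ref{lem:random-conductance-skorohod} gives tightness of $\|N^n\|_{p,[0,T]}$ and hence of $\int N^n_{s-} \otimes \dd N^n_s$ via the rough path BDG of~\cite{Friz2008, Chevyrev2019, friz2018differential}. The two cross-terms fit directly into Proposition~\ref{prop:area-variation} (with $N^n$ as the martingale integrator and $Z^n$ as the $p$-variation integrand, plus one application of integration by parts for the other ordering, noting that $[N^n, Z^n] = 0$ because $Z^n$ is continuous of finite variation). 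To identify the limit, I combine the known Skorohod-level convergence $(N^n, Z^n) \to (B^N, B^Z)$ from Lemma~\ref{lem:random-conductance-skorohod} with: Kurtz--Protter (Theorem~\ref{lem:UCV}) applied to $\int Z^n \otimes \dd N^n$; Corollary~\ref{cor:UCV} applied to $\int N^n \otimes \dd Z^n$ (with vanishing bracket); the martingale rough path BDG / continuity result for $\int N^n \otimes \dd N^n \to \int B^N \otimes \dd B^N$; and Theorem~\ref{thm:additive-rp} for $\int Z^n \otimes \dd Z^n \to \int B^Z \otimes \circ\dd B^Z$, where the correction vanishes because $\mathcal L$ is symmetric (reversibility), so $\mathcal L_A = 0$. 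Interpolating these weak limits with the tightness bound via Lemma~\ref{lem:rp-conv} upgrades convergence to $p$-variation Skorohod/uniform topology.

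The final computational step, which I expect to be the most delicate bookkeeping, is to show that summing the four limits produces $\int_0^t B \otimes \dd B + \Gamma t$ with the claimed $\Gamma$. Using integration by parts on $B^N \otimes B^Z$ in the limit gives
\[
\int_0^t N^n \otimes \dd Z^n + \int_0^t Z^n \otimes \dd N^n \longrightarrow \int_0^t B^N \otimes \dd B^Z + \int_0^t B^Z \otimes \dd B^N + [B^N, B^Z]_t,
\]
while the pure Stratonovich piece contributes $\frac12 [B^Z, B^Z]_t$. Writing $B = B^N + B^Z$ and expanding $\int B \otimes \dd B$ in the same four pieces, the correction collapses to
\[
\Gamma t = \tfrac12[B^N, B^N]_t + [B^N, B^Z]_t + \tfrac12[B^Z,B^Z]_t - \tfrac12[B^N, B^N]_t = \tfrac12\langle B,B\rangle_t - \tfrac12\langle B^N, B^N\rangle_t.
\]
The remaining task is to evaluate $\langle B^N, B^N\rangle_t$: specialising~\eqref{eq:rwre-qv} to $b=0$ and passing to the limit via the ergodic theorem yields $\langle B^N, B^N \rangle_t = t\sum_{|y|=1} E_\pi[\eta(0,y)] y^{\otimes 2} = 2t E_\pi[\eta(\{0,e_1\})] I_d$, using the i.i.d. and $\eta(\{x,y\}) = \eta(\{y,x\})$ symmetry. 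Halving this gives exactly $E_{\pi}[\eta(0,e_1)] I_d \cdot t$, completing the identification of $\Gamma$.
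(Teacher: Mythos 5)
Your proposal is correct and follows essentially the same strategy as the paper: split $X^{\eta,n}=N^n+Z^n$, expand $\mathbb{X}^{\eta,n}$ by bilinearity, identify each piece via Kurtz--Protter/Corollary~\ref{cor:UCV}/Theorem~\ref{thm:additive-rp}, control $p$-variation via L\'epingle BDG and Proposition~\ref{prop:area-variation}, and assemble $\Gamma=\tfrac12\langle B,B\rangle_1-\tfrac12\langle B^N,B^N\rangle_1$ with $\tfrac12\langle B^N,B^N\rangle_1=E_\pi[\eta(0,e_1)]I_d$; the paper merely groups $\int N^n\otimes\dd N^n+\int Z^n\otimes\dd N^n$ into a single term $\int X^{\eta,n}_{s-}\otimes\dd N^n_s$, a cosmetic difference. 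One point to make explicit: you need the four pieces to converge \emph{jointly}, and while Kurtz--Protter gives this for the martingale-integrator pieces, Theorem~\ref{thm:additive-rp} as stated only gives the marginal law of $(Z^n,\mathbb{Z}^n)$; the paper strengthens Lemma~\ref{lem:random-conductance-skorohod} to joint convergence of $(N^n,Z^n,\mathbb{Z}^n_{0,\cdot})$ by revisiting Corollary~\ref{cor:additive-rp} and using the joint covariation identity~\eqref{eq:rwre-qv} between $N^n$ and the resolvent martingale $M^{\lambda,n}$, and your proof would need the same supplement.
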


\begin{proof}
  Using~{\eqref{eq:rwre-qv}} together with the arguments from the proof of
  Corollary~\ref{cor:additive-rp} it is not hard to strengthen
  Lemma~\ref{lem:random-conductance-skorohod} to obtain the joint convergence
  \[
  	(N^n, Z^n, \mathbb{Z}^n_{0, \cdot})\longrightarrow \left( B^N, B^Z,\int_0^{\cdot} B^Z_s \otimes \dd B^Z_s + \frac{1}{2} \langle B^Z, B^Z \rangle \right).
  \]
  Since the limit is continuous the triple is even C-tight,
  and therefore by Lemma~\ref{lem:C-tight} also $X^{\eta, n} = N^n + Z^n$
  converges in distribution in the Skorohod topology to $B = B^Z + B^N$, and
  the convergence is jointly with $(N^n, Z^n, \mathbb{Z}^n_{0, \cdot})$. The
  iterated integrals of $X^{\eta, n}$ are given by
  \begin{equation}
    \label{eq:rwre-iterated-integrals} \mathbb{X}^{\eta, n}_{0, t} = \int_0^t
    X^{\eta,n}_{s -} \otimes \dd N^n_s + \int_0^t N^n_{s -} \otimes \dd Z^n_s
    +\mathbb{Z}^n_{0, t} .
  \end{equation}
  Recall from Lemma~\ref{lem:random-conductance-skorohod} that $N^n$ satisfies
  the UCV property. Since $Z^n$ is continuous and of finite variation, we get
  from Theorem \ref{lem:UCV} and Corollary \ref{cor:UCV} the joint convergence
\begin{align*}
    & \left( N^n, Z^n, \mathbb{Z}^n_{0, \cdot}, X^{\eta, n},
    \int_0^{\cdot} X^n_{s -} \otimes \dd N^n_s, \int_0^{\cdot} N^n_{s
    -} \otimes \dd Z^n_s \right)\\
    & \rightarrow \left( B^N, B^Z, \int_0^{\cdot} B^Z_s \otimes \dd
    B^Z_s + \frac{1}{2} \langle B^Z, B^Z \rangle, B, \int_0^{\cdot} B_s
    \otimes \dd B^N_s, \int_0^{\cdot} B_s^N \otimes \dd B^Z_s +
    \langle B^N, B^Z \rangle \right) .
  \end{align*}
  Since all the limiting processes are continuous the tuple is C-tight and the
  joint convergence extends to sums of the entries, so from
  {\eqref{eq:rwre-iterated-integrals}} we get
\begin{align*}
    (X^{\eta, n}, \mathbb{X}^{\eta, n}_{0, \cdot}) & \rightarrow \left( B,
    \int_0^{\cdot} B_s \otimes \dd B_s + \frac{1}{2} \langle B^Z, B^Z
    \rangle + \langle B^N, B^Z \rangle \right)\\
    & = \left( B, \int_0^{\cdot} B_s \otimes \dd B_s + \frac{1}{2}
    \langle B, B \rangle - \frac{1}{2} \langle B^N, B^N \rangle \right)
  \end{align*}
  and by~{\eqref{eq:random-conductance-limiting-qv}} the last term on
  the right hand side is given by
  \[ - \frac{1}{2} \langle B^N, B^N \rangle_t = - \frac{t}{2} \sum_{| y | = 1}
     E_{\pi} [\eta (0, y) y \otimes y] = - \frac{t}{2} E_{\pi} [\eta (0, e_1)]
     \sum_{| y | = 1} y \otimes y = - t E_{\pi} [\eta (0, e_1)] I_d . \]

  To complete the proof it remains to show tightness of the
  $p$-variation. Since \eqref{eq:additive-rp-assumption-1} holds in the reversible case, see \cite[Section 2.7.1]{Komorowski2012}, Theorem~\ref{thm:additive-rp} implies that $(\|
  Z^n \|_{p, [0, T]} + \| \mathbb{Z}^n \|_{p / 2, [0, T]})_n$ is tight. For
  the first level of the rough path we have $\| X^{\eta, n} \|_{p, [0, T]} \leqslant \| N_n
  \|_{p, [0, T]} + \| Z^n \|_{p, [0, T]}$, and $\mathbb{E} [\| N^n \|_{p, [0,
  T]}^2] \lesssim \mathbb{E} [\langle N^n \rangle_T] \lesssim 1$ by Theorem \ref{thm:lepingle BDG} together with~{\eqref{eq:rwre-qv}},
  and we already know that $\| Z^n \|_{p, [0, T]}$ is tight. From~{\eqref{eq:rwre-iterated-integrals}} we get
\begin{align*}
    \| \mathbb{X}^{\eta, n} \|_{p / 2, [0, T]} & \leqslant \left\| \left(
    \int_s^t X^{\eta,n}_{r, s -} \otimes \dd N^n_r \right)_{0 \leqslant s
    \leqslant t \leqslant T} \right\|_{p / 2, [0, T]} + \left\| \left(
    \int_s^t N^n_{r, s} \otimes \dd Z^n_r \right)_{0 \leqslant s \leqslant
    t \leqslant T} \right\|_{p / 2, [0, T]}\\
    & \quad + \| \mathbb{Z}^n \|_{p / 2, [0, T]} .
  \end{align*}
  We apply Proposition~\ref{prop:area-variation} for the first term on the
  right hand side and obtain
  \[ \mathbb{E} \left[ \left\| \left( \int_s^t X^{\eta,n}_{r, s -} \otimes \dd
     N^n_r \right)_{0 \leqslant s \leqslant t \leqslant T} \right\|_{p / 2,
     [0, T]}^{1 - \varepsilon} \right] \lesssim (1 +\mathbb{E} [\| X^{\eta,n} \|_{p',
     [0, T]}^2]^{1 / 2}) (1 +\mathbb{E} [\langle N^n \rangle_T]^{1 / 2})
     \lesssim 1, \]
  where $p' \in (2, p)$. The second term on the right hand side can be
  controlled via integration by parts and a similar application of
  Proposition~\ref{prop:area-variation}. And we already know that $(\|
  \mathbb{Z}^n \|_{p / 2, [0, T]})_n$ is tight. Hence we get the tightness of
  $(\| \mathbb{X}^{\eta, n} \|_{p / 2, [0, T]})_n$, and this concludes the
  proof.
\end{proof}

{
\begin{remark}
	We did not really use that the conductances are i.i.d., and the same proof works if they are only ergodic with respect to the shifts on $\bb Z^d$. In that case the correction $\Gamma$ of Theorem~\ref{thm: conductances ito convergence} is given by
	\[
		\Gamma = \frac12 \langle B, B \rangle_1 - \operatorname{diag}(E_\pi[\eta(0,e_1)], \dots, E_\pi[\eta(0,e_d)]),
	\]
	where $\operatorname{diag}(\dots)$ is a diagonal matrix with the respective entries on the diagonal.
	In the i.i.d. setting and for $d > 2$ we expect that it is possible to get stronger results (H\"older topology instead of $p$-variation, speed of convergence, convergence under the quenched measure) by using the spectral gap result of~\cite{Gloria2015}.
\end{remark}
}

\subsubsection{Scaling limit for the Stratonovich rough path}
In our discrete setting of the random walk in random environment it seems natural to consider the It\^o iterated integrals $\int_s^t X^{\eta,n}_{s,r-} \otimes \dd X^{\eta,n}_r$. But of course this is not the only option, and we might also turn $X^\eta$ into a continuous path by connecting the jumps piecewise linearly, as it is often done for Donsker's invariance principle. More precisely, if $\sigma_k^n$, $k=1,2,\dots$ are the
jump times of the process $X^{\eta, n}$, then we set
\[
 \bar X^{\eta, n}_t :=  X^{\eta, n}_{\sigma^n_k} + \frac{t-\sigma^n_{k}}{\sigma^n_{k+1} - \sigma^n_{k}}X^{\eta, n}_{\sigma^n_k,\sigma^n_{k+1}}
\]
for $\sigma^n_k \leqslant t < \sigma^n_{k+1}$. We then define $\bar{\mathbb{X}}^{\eta,n}_{s,t}=\int_s^t \bar X^{\eta,n}_{s,r}\otimes \dd \bar X^{\eta,n}_{r}$, and as usual $\bar{\mathbb{X}}^{\eta,n}_{t}=\bar{\mathbb{X}}^{\eta,n}_{0,t}$. Note that $\sup_{t \geqslant 0} | \bar X^{\eta,n}_t - X^{\eta,n}_t | \leqslant n^{-1/2}$, and therefore $\bar X^{\eta,n}$ converges to the same Brownian motion as $X^{\eta,n}$. The difference arises only on the level of the iterated integrals: We have
  \begin{eqnarray*}
   \int_{\sigma^n_k}^{\sigma^n_{k+1}} \bar X^{\eta,n}_{0,r}\otimes \dd \bar X^{\eta,n}_{r} &=&
  \frac 1 2 (\sigma^n_{k+1}-\sigma^n_k) X^{\eta,n}_{\sigma^n_k,\sigma^n_{k+1}} \otimes \frac{X^{\eta,n}_{\sigma^n_k,\sigma^n_{k+1}}}{\sigma^n_{k+1}-\sigma^n_k} +
  X^{\eta,n}_{\sigma^n_{k}}  \otimes X^{\eta,n}_{\sigma^n_{k},\sigma^n_{k+1}}\\
  &=& \frac 1 2 (X^{\eta,n}_{\sigma^n_{k}}+X^{\eta,n}_{\sigma^n_{k+1}})  \otimes X^{\eta,n}_{\sigma^n_{k},\sigma^n_{k+1}},
  \end{eqnarray*}
Therefore,
    \begin{eqnarray*}
    \bar{\mathbb{X}}^{\eta,n}_{\sigma^n_{k}} - {\mathbb{X}}^{\eta,n}_{\sigma^n_{k}} &=&
    \sum_{j=0}^{k-1}  \left( \int_{\sigma^n_j}^{\sigma^n_{j+1}} \bar X^{\eta,n}_{0,r}\otimes \dd \bar X^{\eta,n}_{r} -  X^{\eta,n}_{\sigma^n_{j}}  \otimes X^{\eta,n}_{\sigma^n_{j},\sigma^n_{j+1}} \right)\\
    &=& \frac 1 2 \sum_{j=0}^{k-1} (X^{\eta,n}_{\sigma^n_{j},\sigma^n_{j+1}})^{\otimes 2}.
    \end{eqnarray*}
Using the ergodic theorem for the stationary ergodic sequence
$\left(\{\eta(s):s\in [k,k+1]\}\right)_{k\geqslant 0}$ and the fact that $\sigma^n_k = n\sigma^1_k$, we get for fixed $t$
\begin{eqnarray*}
\frac 1 2 \sum_{k: \sigma^n_k \leqslant       t} (X^{\eta,n}_{\sigma^n_k,\sigma_{k+1}^n})^{\otimes 2} &=&
\frac{1}{2n}  \sum_{ 0< s \leqslant       nt} (X^\eta_{s-,s})^{\otimes 2}\\
&=&
\frac{\lfloor nt\rfloor}{2n} \frac{1}{\lfloor nt\rfloor} \sum_{ k=0}^{\lfloor nt\rfloor-1}
\left(
\sum_{k\leqslant       s < k +1}
\left(\sum_{|y|=1} y \mathrm{1}_{\eta(s)=\tau_y\eta(s-)}  \right)^{\otimes 2}
\right) \\
&& + \frac{1}{2n}\sum_{\lfloor nt \rfloor \leqslant       s < nt} (X^\eta_{s-,s})^{\otimes 2}\\
&=&
\frac{\lfloor nt\rfloor}{2n} \frac{1}{\lfloor nt\rfloor} \sum_{ k=0}^{\lfloor nt\rfloor-1} \Psi(\theta_k\{\eta(s):s\in [0,1]\}) \; + \; O\left(\frac1n\right)\\
&\rightarrow&
\frac{t}{2} \bb E_\pi[\Psi(\{\eta(s):s\in [0,1]\})]\\
&=&
\frac{t}{2} \bb E_{\pi}
\left[\sum_{0<j:\sigma_j^1\leqslant       1}
(X^\eta_{\sigma_{j-1}^1,\sigma_{j}^1})^{\otimes 2}
\right]
\end{eqnarray*}
where the convergence as $n\to\infty$ is {in $L^1(\bb P_\pi)$ (easy to see) and $\bb P_{\pi}$ almost surely (to be justified below),}  and where
\begin{equation}
\Psi(\{\eta(s):s\in [0,1]\}) = \sum_{0<   s \leqslant 1}
\left(\sum_{|y|=1} y \mathds{1}_{\eta(s)=\tau_y\eta(s-)}  \right)^{\otimes 2}.
\end{equation}
{To see that the $O(\frac1n)$ term converges $\bb P_\pi$ almost surely to zero, note that by stationarity and since the norm of $t\mapsto \sum_{0 < s \leqslant t}
\left(\sum_{|y|=1} y \mathds{1}_{\eta(s)=\tau_y\eta(s-)}  \right)^{\otimes 2}\in \bbR^{d\times d}$ is increasing in $t$:
\[
	\bb E_\pi\left[ \left|\frac{1}{2n}\sum_{\lfloor nt \rfloor \leqslant s < nt} (X^\eta_{s-,s})^{\otimes 2}\right|^2\right] \le \frac{1}{4n^2} \bb E_\pi\left[ \left|\sum_{0 \leqslant s < 1} (X^\eta_{s-,s})^{\otimes 2}\right|^2\right].
\]
Our jump rates are uniformly bounded and the size of each jump is bounded by $1$, and therefore the expectation on the right hand side is finite. Consequently,
\[
	\bb E_\pi \left[\sum_n \left|\frac{1}{2n}\sum_{\lfloor nt \rfloor \leqslant s < nt} (X^\eta_{s-,s})^{\otimes 2}\right|^2 \right] < \infty,
\]
and thus the summands converge almost surely to zero.}

By Lemma~\ref{lem:stationary-uniform} in the appendix the $L^1(\pi)$-convergence holds even locally uniformly in time. Let us compute the limit: Since the additive functional in the decomposition of $X^\eta$ in \eqref{eq:rwre-decomposition} does not jump we have $X^\eta_{t-,t} = N_{t-,t}$ for all $t>0$, and therefore
 \begin{eqnarray*}
\bb E_{\pi} \left[
\sum_{0<j:\sigma_j^1\leqslant 1}
(X^\eta_{\sigma_{j-1}^1,\sigma_{j}^1})^{\otimes 2}
\right] &=& \bb E_{\pi}\left[ \sum_{0<t \leqslant       1} (X^\eta_{t-,t})^{\otimes 2} \right] =
E\left[ \sum_{0<t \leqslant  1} (N_{t-,t})^{\otimes 2} \right] \\
&=& \bb E_{\pi}[ [N,N]_1 ] = \bb E_{\pi}[ \langle N,N\rangle_1 ] =  \langle B^N,B^N\rangle_1.
 \end{eqnarray*}
Therefore,
    \begin{equation}\label{eq: qv stratonovich}
    \left(\bar{\mathbb{X}}^{\eta,n}_{t} - {\mathbb{X}}^{\eta,n}_{t}\right)_{t\in[0,T]} \to \frac{1}{2}\big(\langle B^N, B^N\rangle_t \big)_{t\in[0,T]}
    \text{ as } n\to\infty,
    \end{equation}
{and since the left hand side is increasing {in the sense of positive definite matrices and thus in norm} and the convergence is uniform in $L^1$, it holds even for the 1-variation norm.
In the proof of Theorem~\ref{thm: conductances ito convergence} we saw that
\[
	\frac12 \langle B^N, B^N\rangle_t = \frac12 \langle B, B\rangle_t - \Gamma t,
\]
}so together with \eqref{eq: qv stratonovich} and the fact that the Stratonovich integral equals
 $\int_0^t B_s \otimes \circ \dd B_s = \int_0^t B_s \otimes \dd B_s + \frac{1}{2} \langle B,B \rangle_t$,
we deduce for the case of linear interpolations that the limit is the Stratonovich Brownian rough path, with no correction:
\begin{corollary}
  Let $(\bar {X}^{\eta, n}, \bar{\mathbb{X}}^{\eta, n})$ be the linear interpolation of the path $X^{\eta, n}$ and its corresponding iterated integral defined above.
  Then $(\bar {X}^{\eta, n}, \bar{\mathbb{X}}^{\eta, n})$ converges in distribution
  in the $p$-variation topology to $\left( B,  \int_0^\cdot B_s \otimes \circ \dd B_s  \right)$, where $B$ is the same Brownian motion as in Theorem~\ref{thm: conductances ito convergence}, and
 $\circ$ denotes Stratonovich integration.
\end{corollary}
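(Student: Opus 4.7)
The proof assembles three facts essentially already established in the discussion leading up to the statement: (i) Theorem~\ref{thm: conductances ito convergence} gives rough path convergence of the It\^o lift $(X^{\eta,n}, \mathbb{X}^{\eta,n})$ to $(B, \int_0^\cdot B_s \otimes \dd B_s + \Gamma t)$; (ii) the telescoping computation identifies the pathwise difference $\bar{\mathbb{X}}^{\eta,n} - \mathbb{X}^{\eta,n}$ as (up to a negligible contribution from the partial jump interval at the right endpoint) the nondecreasing positive semidefinite matrix-valued process $\tfrac12 \sum_{j:\sigma^n_j \leq \cdot} (X^{\eta,n}_{\sigma^n_{j-},\sigma^n_j})^{\otimes 2}$, with limit $\tfrac12 \langle B^N, B^N\rangle_\cdot$ locally uniformly; and (iii) from the proof of Theorem~\ref{thm: conductances ito convergence} we have $\Gamma = \tfrac12 \langle B,B\rangle_1 - \tfrac12 \langle B^N,B^N\rangle_1$, hence by linearity in $t$ also $\Gamma t = \tfrac12 \langle B,B\rangle_t - \tfrac12 \langle B^N,B^N\rangle_t$.

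The first step is to pass to the limit in the uniform topology. The first level is immediate: since $\sup_{t \in [0,T]} |\bar X^{\eta,n}_t - X^{\eta,n}_t| \leq n^{-1/2}$, the linear interpolation $\bar X^{\eta,n}$ converges in distribution uniformly to the same Brownian motion $B$ as $X^{\eta,n}$. Combining (i)--(iii) with the joint convergence established in Theorem~\ref{thm: conductances ito convergence} then yields
\[
(\bar X^{\eta,n}, \bar{\mathbb{X}}^{\eta,n}) \ \longrightarrow\ \Big(B,\ \int_0^\cdot B_s \otimes \dd B_s + \Gamma t + \tfrac12 \langle B^N,B^N\rangle_\cdot\Big) = \Big(B,\ \int_0^\cdot B_s \otimes \circ \dd B_s\Big)
\]
in distribution in the uniform topology on $C([0,T], \bbR^d \oplus \bbR^{d \otimes d})$.

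The second step is to upgrade to convergence in the $p$-variation rough path topology via Lemma~\ref{lem:rp-conv}. For this it suffices to show tightness of $\|(\bar X^{\eta,n}, \bar{\mathbb{X}}^{\eta,n})\|_{p,[0,T]}$ for some $p \in (2,3)$. Tightness of the first level reduces to that of $\|X^{\eta,n}\|_{p,[0,T]}$ (Theorem~\ref{thm: conductances ito convergence}) modulo the $n^{-1/2}$ interpolation error. For the second level, writing $D^n := \bar{\mathbb{X}}^{\eta,n} - \mathbb{X}^{\eta,n}$ with $D^n$ as in (ii), which is nondecreasing in the positive semidefinite partial order, we have $\|D^n\|_{p/2,[0,T]} \leq \|D^n\|_{1,[0,T]} \lesssim |D^n_T|$, and the latter is $L^1$-bounded by the ergodic computation preceding the statement. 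Combined with the tightness of $\|\mathbb{X}^{\eta,n}\|_{p/2,[0,T]}$ from Theorem~\ref{thm: conductances ito convergence}, this gives the required tightness.

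The only step that deserves some care rather than posing a real obstacle is the arithmetic identity $\Gamma + \tfrac12 \langle B^N, B^N\rangle_1 = \tfrac12 \langle B,B\rangle_1$, which turns the It\^o-with-correction limit into the Stratonovich lift: it reflects the fact that the symmetric midpoint rule enforced by the linear interpolation exactly removes the extra area correction produced by the jumps of the martingale part $N$ in the decomposition $X^\eta = N + Z$.
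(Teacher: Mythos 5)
Your proposal follows the paper's argument in its essentials: compute the telescoped difference $\bar{\mathbb{X}}^{\eta,n}-\mathbb{X}^{\eta,n}$ as the nondecreasing sum of squared jump increments; pass to its limit $\tfrac12\langle B^N,B^N\rangle_\cdot$ via the ergodic theorem; observe that PSD-monotonicity controls the $1$-variation (hence the $p/2$-variation) of the difference; and invoke the identity $\Gamma=\tfrac12\langle B,B\rangle_1-\tfrac12\langle B^N,B^N\rangle_1$ from the proof of Theorem~\ref{thm: conductances ito convergence}. You route the conclusion through Lemma~\ref{lem:rp-conv} (uniform convergence plus $p$-variation tightness), whereas the paper phrases it as direct convergence of the difference in $1$-variation added to the It\^o rough path limit; both routes lead to the same place. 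One point to repair: the sentence ``Tightness of the first level reduces to that of $\|X^{\eta,n}\|_{p,[0,T]}$ \ldots modulo the $n^{-1/2}$ interpolation error'' is not a valid step, since a uniformly small perturbation can have arbitrarily large $p$-variation. The conclusion is nevertheless true, but for a different reason: $\bar X^{\eta,n}$ is the piecewise linear interpolation of $X^{\eta,n}$ along the jump times, so on any sub-interval of a jump interval the increment is a fraction $\alpha$ of the jump, and $\sum_i\alpha_i^p\le(\sum_i\alpha_i)^p=1$ for $p\ge1$; refining any partition to include the jump times then gives $\|\bar X^{\eta,n}\|_{p,[0,T]}\lesssim\|X^{\eta,n}\|_{p,[0,T]}$, which supplies the required tightness.
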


\subsection{Additive functional of Ornstein-Uhlenbeck process with divergence-free drift}

{In this section we give a simple example of an additive functional of an Ornstein-Uhlenbeck process with divergence free drift with a non-vanishing area anomaly, i.e. so that the correction $\Gamma$ from \eqref{eq:limit of af} is non-zero. Let $U:\bbR^2\to\bbR$ be given by $U(x) = \frac12 |x|^2 - \log 2\pi$ and let
\[
        b(x) \;=\;
        \begin{pmatrix}
        -x_2  \\
        x_1
        \end{pmatrix}
        e^{-U(x)}
        \;=\; A x e^{-U(x)},\qquad \text{where } A=\begin{pmatrix}
                 0 & -1 \\
                 1 & 0
            \end{pmatrix}.
\]
Note that $b$ is divergence free. We define the operator
\[
\cL f \;=\; \Delta f - \nabla U \cdot \nabla f - b e^U\cdot \nabla f
\;=\; e^U \nabla \cdot ( e^{-U} \nabla f) - b e^U\cdot \nabla f,
\]
which is the generator of the Ornstein-Uhlenbeck process
\[
	d X_t = -\nabla U(X_t)\dd t - b(X_t) e^{U(X_t)} \dd t + \sqrt 2 \dd W_t = -(I + A) X_t \dd t + \sqrt 2 \dd W_t.
\]
One can check that $\pi(\dd x)=e^{-U(x)}\dd x$ is invariant for $X$. Indeed, if $f\in C^2_b(\bbR^d)$, then integration by parts yields
\[
\int \cL f e^{-U} \dd x \;=\; \int \left(\nabla \cdot (e^{-U}\nabla f) - b\cdot \nabla f\right) \dd x \;=\;
\int \left(\nabla 1 \cdot (e^{-U}\nabla f) + f\nabla \cdot b\right) \dd x \;=\; 0,
\]
because $\nabla \cdot b = 0$. We consider $X$ started in the invariant measure and we are interested in the rough path limit of
\[
	Z^n_t = \frac{1}{\sqrt n}\int_0^{nt} X_s ds.
\]
For that purpose let $F(x) = x$. Since $X$ has a spectral gap, it converges exponentially fast to its invariant measure and we can directly the Poisson equation $-\cL \Phi = F$, i.e. there is no need to first consider the resolvent equation $(\lambda-\cL) \Phi_\lambda = F$ and then send $\lambda \to 0$. To compute the explicit solution to the Poisson equation, we use the standard ansatz
    \[
    \Phi(x) \;=\; C x \;=\;
                \begin{pmatrix}
                 C_{11} & C_{12} \\
                 C_{21} & C_{22}
               \end{pmatrix}
               \begin{pmatrix}
                 x_1  \\
                 x_2
               \end{pmatrix}
               \;=\;
               \begin{pmatrix}
                 C_{11}x_1 + C_{12}x_2   \\
                 C_{21}x_1 + C_{22}x_2
               \end{pmatrix}.
               \]
Write $\Phi(x)= \begin{pmatrix}
                          \Phi_1(x) \\
                          \Phi_2 (x)
                        \end{pmatrix}$, then
$\nabla\Phi(x)= \begin{pmatrix}
                          \nabla\Phi_1(x) \\
                          \nabla\Phi_2 (x)
                        \end{pmatrix}
                        =
                        \begin{pmatrix}
                          C_{11} & C_{12} \\
                          C_{21} & C_{22}
                        \end{pmatrix}
                        = C$
for $\Phi(x)=Cx$.
Hence, for $j=1,2$,
\[
	-\mc L \Phi_j(x) =  \left((I + A)x - \nabla \right) \cdot \nabla \Phi_j(x) = (I + A)x \cdot C_{j,{\cdot}},
\]
or more compactly
\[
	-\mc L \Phi(x) = C(I + A)x.
\]
The equation $-\mc L \Phi(x) = F(x)=x$ then yields
\[
	 C(I + A) = I.
\]
Since $A^2 = -I$, this implies
\[
C= \frac12 (I-A)= \frac12 \begin{pmatrix}
                 1 & 1 \\
                 -1 & 1
            \end{pmatrix}.
\]
The Ornstein-Uhlenbeck operator has a spectral gap, so condition~\eqref{eq:additive-rp-assumption-1} is satisfied; see \cite[Theorem~2.18]{Komorowski2012}. By Theorem~\ref{thm:additive-rp}, we get the convergence
\[
	(Z^n, \bb Z^n) \longrightarrow \left(B, \left( \int_0^t B_s \circ dB_s + \Gamma t\right)_{t\in [0,T]}\right),
\]
in distribution in $p$-variation, where $B$ is a Brownian motion with covariance
\[
	\langle B^i, B^j\rangle_t = 2 t E_\pi[\Phi^i  (-\mc L_S) \Phi^j] = 2t E_\pi [ (Cx)^i (Cx)^j] = 2t (C_{i1}C_{j1} + C_{i2} C_{j2}) = t I,
\]
where we used that under $\pi$ the coordinates $(x^1,x^2)$ are independent standard Gaussians, and where
\begin{align*}
	\Gamma_{ij} & = E_\pi[\Phi^i \mc L_A \Phi^j] = E_\pi[ (Cx)^i (-CAx)^j] = \begin{pmatrix} C_{i1} & C_{i2} \end{pmatrix} E_\pi[ x (Ax)^T] \begin{pmatrix} C_{j1} \\ C_{j2} \end{pmatrix} \\
	& = \begin{pmatrix} C_{i1} & C_{i2} \end{pmatrix} E_\pi\left[ \begin{pmatrix}
                 -x_1x_2 & x_1^2 \\
                 -x_2^2 & x_1x_2
            \end{pmatrix} \right] \begin{pmatrix} C_{j1} \\ C_{j2} \end{pmatrix}
      = -C_{i2} C_{j1} + C_{i1} C_{j2}
      = {\begin{pmatrix} 0 & -1\\ 1 & 0 \end{pmatrix}_{ij}}.
\end{align*}
In other words, we see a nontrivial correction to the iterated integrals of $B$.}

\subsection{Diffusions with periodic coefficients}

Consider a smooth $\mathbb{Z}^d$-periodic function $a : \mathbb{R}^d
\rightarrow \mathbb{R}^{d \times d}$ and
$ L = \nabla \cdummy (a \nabla)$,
that is
\[ Lf(x)
    = \sum_{i, j = 1}^d \left(a_{i j}(x) \partial_i
   \partial_j f(x) + \partial_i a_{i j}(x) \partial_jf(x)\right).
   \]
We assume that the symmetric part of $a$ is uniformly elliptic ($a$ itself is not necessarily symmetric). Then there is a unique diffusion process
associated with $L$, with coefficients
\[ d X_t^j = \sum_{i = 1}^d \partial_i a_{i j} (X_t) d t + \sqrt{2} \sum_{i =
   1}^d \sigma_{j i} (X_t) d W^i_t, \]
where
\[
	\sigma = \sqrt{a^S}, \qquad a^S = \frac{1}{2} (a + a^{\ast}),\qquad a^A = \frac12 (a - a^\ast).
\]
To simplify notation we write
\[ b_j = \sum_{i = 1}^d \partial_i a_{i j} = \nabla \cdummy a_{\cdot j} \]
so
\[ d X_t = b (X_t) d t + \sqrt{2} \sigma (X_t) d W_t . \]
We assume that $X_0$ is uniformly distributed on $[-\frac12,\frac12]^d$ (just so that the Markov process $Y$ below is stationary) and we want to understand the large scale behavior of $X$
in rough path topology, for which we will derive the following result:

\begin{theorem}\label{thm:periodic-diffusion}
  Let
  \[
  	X^n_t = n^{- 1/2} X_{n t},\qquad t \in [0,T].
  \]
  Then the following convergence holds in $p$-variation rough path topology:
  \[ \left( X^n_t, \int_0^t X^n_s \otimes \circ \dd X^n_s \right) \rightarrow
     \bigg( B_t, \int_0^t B_s \otimes \circ \dd B_s + t \underbrace{\int (\nabla \Phi^i  \cdummy  (-a^A)
     \nabla \Phi^j)_{ij}  \dd x}_{=:\Gamma} \bigg), \]
  where $\Phi$ solves the Poisson equation
  \[ - \nabla \cdummy (a \nabla \Phi) = b \]
  and $B$ is a Brownian motion with quadratic variation
  \[ \langle B^i, B^j\rangle_t = 2 t \int (\nabla \Phi^i + e_i) \cdot a^S (\nabla \Phi^j + e_j) \dd x,
  \]
  for the standard basis $(e_1,\dots, e_d)$ of $\bb R^d$. For the It{\^o} rough path we see an additional correction:
  \[ \left( X^n_t, \int_0^t X^n_s \otimes \dd X^n_s \right) \rightarrow \left(
     B_t, \int_0^t B_s \otimes \dd B_s + \frac{1}{2} \langle B, B \rangle_t -
     t\int a^S \dd x - t \int (\nabla \Phi^i  \cdummy  a^A
     \nabla \Phi^j)_{ij}  \dd x \right) . \]
\end{theorem}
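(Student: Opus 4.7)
The plan is to mimic the strategy used for random walks among random conductances: decompose $X$ into a martingale part plus an additive functional of a Markov process on the torus, apply Theorem~\ref{thm:additive-rp} to the additive functional, and use Corollary~\ref{cor:UCV} to combine everything into a rough path limit. Let $Y_t = X_t \bmod \mathbb{Z}^d$, a Markov process on $\mathbb{T}^d$ whose generator is the restriction of $L$ to periodic functions; call it $\mathcal L$. Because $L$ is in divergence form, the uniform measure $\pi(\dd x) = \dd x$ on $\mathbb{T}^d$ is invariant. Uniform ellipticity of $a^S$ yields a spectral gap, so the assumption \eqref{eq:additive-rp-assumption-1} holds and Poisson equations can be solved directly on $\mathbb{T}^d$. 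Integration by parts shows $\mathcal L_S = \nabla\cdot(a^S\nabla)$ and $\mathcal L_A = \nabla\cdot(a^A\nabla)$. By It\^o's formula applied componentwise to the coordinate map (lifted to the universal cover),
\[
X_t = X_0 + \int_0^t b(Y_s)\,\dd s + N_t, \qquad N_t := \sqrt 2\int_0^t \sigma(Y_s)\,\dd W_s,
\]
with $\langle N^i,N^j\rangle_t = 2\int_0^t a^S_{ij}(Y_s)\,\dd s$. Periodicity gives $\int b_j\,\dd x = 0$, hence $b \in L^2(\pi)\cap \mathcal H^{-1}$; let $\Phi \in \mathcal H^1(\bbR^d)$ solve $-\mathcal L \Phi = b$.

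Setting $Z^n_t := n^{-1/2}\int_0^{nt} b(Y_s)\,\dd s$ and applying Theorem~\ref{thm:additive-rp}, we obtain convergence in $p$-variation of $(Z^n, \mathbb{Z}^n)$ to $(B^Z,\int_0^\cdot B^Z \otimes \circ\,\dd B^Z + \Gamma^Z\, t)$, where
\[
\Gamma^Z_{ij} = E_\pi[\Phi^i \mathcal L_A\Phi^j] = -\int \nabla \Phi^i \cdot a^A \nabla \Phi^j\,\dd x,
\qquad \langle B^{Z,i},B^{Z,j}\rangle_t = 2t\int \nabla \Phi^i \cdot a^S \nabla \Phi^j\,\dd x.
\]
The martingale $N^n_t := n^{-1/2} N_{nt}$ satisfies UCV (its quadratic variation has uniformly bounded expectation because $a^S$ is bounded), and the martingale CLT together with the ergodic theorem gives $N^n \to B^N$, a Brownian motion with $\langle B^{N,i},B^{N,j}\rangle_t = 2t\int a^S_{ij}\,\dd x$. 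For the joint convergence of $(N^n, Z^n)$, compute the predictable covariation of $N^n$ with the resolvent martingale $M^{\lambda,n}$ from Lemma~\ref{lem:resolvent-decomposition}, as in \eqref{eq:rwre-qv}; here $M^\lambda_t = \sqrt 2 \int_0^t \nabla\Phi_\lambda(Y_s)\cdot\sigma(Y_s)\,\dd W_s$ and a direct computation together with the ergodic theorem gives the joint covariance, with the cross-term $\langle B^{N,i}, B^{Z,j}\rangle_t = 2t\int (a^S\nabla\Phi^j)_i\,\dd x$ after passing $\lambda\to 0$ via \eqref{eq:additive-rp-assumption-1}.

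Since $Z^n$ is of finite variation, $\int N^n\otimes\circ\,\dd Z^n = \int N^n\otimes \dd Z^n$ and similarly with the roles swapped; Corollary~\ref{cor:UCV} applied to the UCV sequence $N^n$ then upgrades the joint convergence to joint convergence of all the cross integrals. Since $X^n = N^n + Z^n + X_0/\sqrt n$ with the last term vanishing and $N^n + Z^n = B^N + B^Z =: B$ in the limit, expansion of the Stratonovich iterated integral into the four pieces $\int N^n\otimes\circ\,\dd N^n$, $\int N^n\otimes\,\dd Z^n$, $\int Z^n\otimes \,\dd N^n$, $\int Z^n\otimes\circ\,\dd Z^n$ yields
\[
\Big(X^n,\int_0^\cdot X^n\otimes\circ\,\dd X^n\Big) \longrightarrow \Big(B,\int_0^\cdot B\otimes \circ\,\dd B + \Gamma^Z\, t\Big),
\]
with $\Gamma^Z$ the additive-functional correction (the martingale part contributes no correction since Kurtz--Protter for $\int \cdots \dd N^n$ gives a genuine It\^o integral against $B^N$). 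Reading off $\langle B^i,B^j\rangle_t$ by summing the four covariation contributions gives exactly $2t\int (e_i+\nabla\Phi^i)\cdot a^S(e_j+\nabla\Phi^j)\,\dd x$. For the It\^o rough path one uses $\int X^n\otimes\,\dd X^n = \int X^n\otimes\circ\,\dd X^n - \tfrac12\langle X^n,X^n\rangle$, and $\langle X^n,X^n\rangle_t^{ij} = \langle N^n,N^n\rangle_t^{ij} \to 2t\int a^S_{ij}\,\dd x$ (since $Z^n$ is finite variation), producing the additional $-t\int a^S\,\dd x$ term.

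For tightness in $p$-variation, $\|Z^n\|_{p,[0,T]} + \|\mathbb Z^n\|_{p/2,[0,T]}$ is tight by Theorem~\ref{thm:additive-rp}, $\|N^n\|_{p,[0,T]}$ is tight by L\'epingle's BDG together with $\bb E[\langle N^n\rangle_T]\lesssim 1$, and the cross two-parameter integrals $(s,t)\mapsto \int_s^t X^n_{s,r-}\otimes \dd N^n_r$ are bounded in $p/2$-variation by Proposition~\ref{prop:area-variation} (with integration by parts for the symmetric cross term), just as in the proof of Theorem~\ref{thm: conductances ito convergence}. The main obstacle, as in the conductance case, is the bookkeeping of the joint Gaussian covariance of $(B^N,B^Z)$ and the identification that the quadratic variation of $B$ matches the stated homogenized formula involving the correctors $\Phi^i$; everything else is a fairly mechanical application of the abstract rough-path invariance principle.
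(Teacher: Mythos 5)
Your proof follows exactly the same route as the paper: lift to the environment process $Y_t=X_t\bmod\bb Z^d$ on $\bb T^d$, use uniform ellipticity and Poincar\'e for the spectral gap, decompose $X^n = N^n + Z^n$ into the diffusion martingale and the additive functional, apply Theorem~\ref{thm:additive-rp} to $Z^n$, invoke Theorem~\ref{lem:UCV} and Corollary~\ref{cor:UCV} for the cross integrals, and get tightness in $p$-variation via L\'epingle's BDG and Proposition~\ref{prop:area-variation}. The only cosmetic difference is the order: the paper first works out the It\^o rough path limit and then subtracts $\tfrac12\langle M^n,M^n\rangle$ to get the Stratonovich statement, whereas you present the Stratonovich piece first and then add back $-\tfrac12\langle X^n,X^n\rangle_t = -\tfrac12\langle N^n,N^n\rangle_t$ for It\^o; the underlying bookkeeping of the four cross terms is identical, so this is the same proof.
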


\begin{remark}
The convergence of the Stratonovich rough path was previously shown by Lejay and Lyons \cite[Proposition 6]{lejay2003importance}. Their proof uses the fact that we control all moments of $X$, from where the required tightness in H\"older topology (which is stronger than $p$-variation) easily follows via a Kolmogorov continuity criterion for rough paths, and there is no need to invoke a result like Proposition~\ref{prop:area-variation}. Our general approach has the advantage that it applies to a much wider class of models and that we can apply it without having to do additional estimations, but in this special case it gives a weaker result.
\end{remark}


We now sketch the proof of the claimed convergence. Let us rewrite
\[ Y_t = X_t \tmop{mod} \mathbb{Z}^d . \]
Since the coefficients of $X$ are periodic, $Y$ is a Markov process with
values in $\mathbb{T}^d = (\bb R / \bb Z)^d$, with generator $\mathcal{L}$ given by the same
expression as $L$, except now it acts on $C^2 (\mathbb{T}^d)$ rather than on
$C^2 (\mathbb{R}^d)$. The Lebesgue measure on $\mathbb{T}^d$ is invariant for $Y$,
and we have
\[ \int_{\mathbb{T}^d} b_j (x) \dd x = \int_{\mathbb{T}^d} \sum_{i = 1}^d
   \partial_i a_{i j} (x) \dd x = 0 \]
by the periodic boundary conditions.

Therefore, we can write (slightly abusing notation by also considering $b$,
$\sigma$ etc. as functions on $\mathbb{T}^d$)
\[ n^{-1/2} X_{n t} = n^{-1/2} \int_0^{n t} b (Y_s) d s +
   n^{-1/2} \int_0^{n t} \sqrt{2} \sigma (Y_s) d W_s = Z^n_t + M^n_t, \]
where $M^n$ is a martingale with quadratic variation
\[ \langle M^n \rangle_t^{i j} = \frac{2}{n} \int_0^{n t} (\sigma
   \sigma^{\ast})_{i j} (X_s) d s = \frac{2}{n} \int_0^{n t} a^S_{i j}
   (X_s) d s = \frac{2}{n} \int_0^{n t} a^S_{i j} (Y_s) d s, \]
and where $Z^n$ is a functional that we can control with our tools from Section~\ref{sec:kipnis-varadhan}. By the uniform ellipticity of $a^S$ together with the
 Poincar{\'e} inequality we have for all $f$ with $\int f \dd x = 0$
\[ \int f (-\mathcal{L}) f \dd x = \int \nabla f \cdummy a \nabla f \dd x = \int
   \nabla f \cdummy a^S \nabla f \dd x \gtrsim \int | \nabla f |^2 \dd x
   \gtrsim \int f^2 \dd x, \]
i.e. $\mathcal{L}$ has a spectral gap and $Y$ is exponentially ergodic. Thus $\langle M^n \rangle_t \rightarrow 2 t \int a^S (x) \dd x$, from where we can show with some more work that $M^n \rightarrow B^M$ for a $d$-dimensional Brownian motion with covariance
\[ \langle B^M, B^M \rangle_t = 2 t \int a^S (x) d x. \]
Since $M^n$ satisfies the UCV condition, the convergence of the lifted path also holds in
the $p$-variation rough path topology for every $p>2$ by Proposition~\ref{prop:area-variation} or, since both integrator and integrand are martingales, also by \cite[Theorem 6.1]{friz2018differential}. To control the term $Z^n$ we use that $Y$ has a spectral gap and that therefore we can directly solve the resolvent equation with $\lambda=0$, i.e. we consider the solution $\Phi$ to the Poisson equation
\[ -\mathcal{L} \Phi = - \nabla \cdummy (a \nabla \Phi) = b, \]
which is given by $\Phi = \int_0^\infty P_t b \dd t$, where $(P_t)$ is the semigroup of $Y$. The time integral converges because $P_t b$ converges exponentially fast to $\int b \dd x = 0$. Since $Y$ has a spectral gap, the conditions of Theorem~\ref{thm:additive-rp} are satisfied (see \cite[Theorem~2.18]{Komorowski2012}), and therefore $(Z^n, \int_0^\cdot Z^n_s \otimes \dd Z^n_s)$ converges to the corrected Stratonovich rough path
\[ \left( B^Z, \int_0^{\cdummy} B^Z_s \otimes \circ \dd B^Z_s + t \int \Phi
   \otimes \mathcal{L}_A \Phi \dd x \right), \]
where $B^Z$ is a Brownian motion with covariance
\[ \langle B^Z, B^Z \rangle_t^{ij} = 2 \int \Phi^i (-\mathcal{L}_S) \Phi^j
    \dd x = 2 \int \nabla \Phi^i \cdummy  a^S  \nabla \Phi^j  \dd
   x, \]
and where
\[
	\left(\int \Phi \otimes \mathcal{L}_A \Phi \dd x\right)_{ij} = - \int \nabla \Phi^i \cdot (a^A \nabla \Phi^j) \dd x
\]
It remains to understand the quadratic covariation of $B^M$ and $B^Z$, as well
as the cross-integrals $\int Z^n \otimes d M^n$ and $\int M^n \otimes d Z^n$. To derive the covariation, note that we get with the solution to the Poisson
equation $\Phi$
\[ Z^n_t = \frac{1}{n} \Phi (Y_0) - n^{-1/2} \Phi (Y_{n t}) + n^{-1/2}
   \int_0^{n t} \sqrt{2} \sum_{j, i} \partial_j \Phi (X_s) \sigma_{j i}
   (X_s) \dd W^i_s = R^n_t + N^n_t, \]
and the covariation of $N^n$ and $M^n$ is thus given by
\begin{align*}
  \langle N^n, M^n \rangle^{i j}_t & = \frac{2}{n} \int_0^{n t} \sum_{k,
  \ell} \partial_k \Phi^i (X_s) \sigma_{k \ell} (X_s) \sigma_{j \ell} (X_s) \dd
  s\\
  & = \frac{2}{n} \int_0^{n t} \sum_k \partial_k \Phi^i a^S_{k j} (X_s) \dd
  s\\
  & \rightarrow 2 t \int \sum_k \partial_k \Phi^i a^S_{k j}  \dd x,
\end{align*}
so that $B = B^Z + B^M$ is a Brownian motion with covariance
\begin{align*}
  \langle B, B \rangle_t & = \langle B^M, B^M \rangle_t + \langle B^Z, B^Z
  \rangle_t + 2 \langle B^Z, B^M \rangle_t\\
  & = 2 t \int \left( a^S_{ij} + \nabla \Phi^i \cdummy a^S
  \nabla \Phi^j+ 2\sum_k \partial_k \Phi^i  a^S_{k j} \right)_{ij} \dd x \\
  & = 2 t \int \left( (\nabla \Phi^i + e_i) \cdot a^S (\nabla \Phi^j + e_j) \right)_{ij} \dd x
\end{align*}
The cross-iterated integrals satisfy according to Theorem~\ref{lem:UCV} and Corollary~\ref{cor:UCV}
\begin{align*}
  \int_0^{\cdummy} Z^n_s \otimes d M^n_s & \rightarrow \int_0^{\cdummy} B^Z_s
  \otimes d B^M_s,\\
  \int_0^{\cdummy} M^n_s \otimes d Z^n_s & \rightarrow \int_0^{\cdummy} B^M_s \otimes d B^Z_s + \langle B^M, B^Z \rangle - 0,
\end{align*}
so that overall
\begin{align*}
	& \left( X^n_t, \int_0^t X^n_s \otimes \dd X^n_s \right)\\
	&\hspace{30pt} \rightarrow  \left(B_t, \int_0^t B_s \otimes \dd B_s + \frac{1}{2} \langle B^Z, B^Z \rangle_t +
   \langle B^M, B^Z \rangle_t + t \int \Phi (x) \otimes \mathcal{L}_A \Phi (x)
   \dd x \right),
\end{align*}
and the first part of the correction can be further simplified to
\[ \frac{1}{2} \langle B^Z, B^Z \rangle_t + \langle B^M, B^Z \rangle_t =
   \frac{1}{2} \langle B, B^Z \rangle_t + \frac{1}{2} \langle B^M, B^Z
   \rangle_t = \frac{1}{2} \langle B, B \rangle_t - \frac{1}{2} \langle B^M,
   B^M \rangle_t, \]
which finally yields the limit
\[
	\int_0^t X^n_s \otimes \dd X^n_s \to \int_0^t B_s \otimes \dd B_s + \frac{1}{2} \langle B, B \rangle_t -
     t \int a^S \dd x - t \int (\nabla \Phi^i  \cdummy  a^A
     \nabla \Phi^j)_{ij}  \dd x.
\]
Tightness in $p$-variation follows as in the example of the random conductance model. This proves the first claim of Theorem~\ref{thm:periodic-diffusion}, about the limit of the It\^o rough path.

To identify the limit of the Stratonovich rough path we use that $\langle X^n, X^n \rangle =
\langle M^n, M^n \rangle$ and thus
\begin{align*}
  \int_0^t X^n_s \otimes \circ \dd X^n_s & = \int_0^t X^n_s \otimes \dd X^n_s +
  \frac{1}{2} \langle X^n, X^n \rangle_t\\
  & = \int_0^t X^n_s \otimes \dd X^n_s + \frac{1}{2} \langle M^n, M^n
  \rangle_t\\
  & \rightarrow \int_0^t B_s \otimes \dd B_s + \frac{1}{2} \langle B, B
  \rangle_t - \frac{1}{2} \langle B^M, B^M \rangle_t + t \int \Phi (x) \otimes
  \mathcal{L}_A \Phi (x) \dd x + \frac{1}{2} \langle B^M, B^M \rangle_t\\
  & = \int_0^t B_s \otimes \dd B_s + \frac{1}{2} \langle B, B \rangle_t + t
  \int \Phi (x) \otimes \mathcal{L}_A \Phi (x) \dd x\\
  & = \int_0^t B_s \otimes \circ \dd B_s - t \int \nabla \Phi^i \cdot (a^A \nabla \Phi^j) \dd x.
\end{align*}

\section{Proof of Proposition \ref{prop:area-variation}}\label{sec:proof-area-variation}

We write $\| f \|_{p, [s, t]}$ for the $p$-variation of $f$ restricted to the
interval $[s, t]$.

\begin{definition}
  A {\emph{control function}} is a map $c : \Delta_T \to [0, \infty)$ with $c
  (t, t) = 0$ for all $t \in [0, T]$ and such that $c (s, u) + c (u, t) \leqslant       c
  (s, t)$ for all $0 \leqslant       s \leqslant       u \leqslant       t \leqslant       T$.
\end{definition}

Observe that if $f : [0, T] \rightarrow \mathbb{R}^d$ satisfies $|f_{s, t} |^p
\leqslant       c (s, t)$ for all $(s, t) \in \Delta_T$, then the $p$-variation of $f$ is
bounded from above by $c (0, T)^{1 / p}$. Indeed, we have for any partition
$\pi$ of $[0, T]$
\[ \left( \sum_{[s, t] \in \pi} | f_{s, t} |^p \right)^{1 / p} \leqslant
   \left( \sum_{[s, t] \in \pi} c (s, t) \right)^{1 / p} \leqslant c (0, T)^{1
   / p} . \]
Conversely, if $f$ is of finite $p$-variation, then $c (s, t) \assign \| f
\|_{p, [s, t]}^p$ defines a control function because $c (t, t) = | f_{t, t} |
= 0$ and
\begin{align*}
  c (s, u) + c (u, t) & = \sup_{\pi \text{ Part. of } [s, u]} \sum_{[r,
  v] \in \pi} | f_{r, v} |^p + \sup_{\pi \text{ Part. of } [u, t]}
  \sum_{[r, v] \in \pi} | f_{r, v} |^p\\
  & = \sup_{\tmscript{\begin{array}{c}
    \pi \text{ Part. of }  [s, t]\\
    \text{s.t. } u \in \pi
  \end{array}}} \sum_{[r, v] \in \pi} | f_{r, v} |^p \leqslant \sup_{\pi
  \text{ Part. of } [s, t]} \sum_{[r, v] \in \pi} | f_{r, v} |^p \\
  &= c(s,t) .
\end{align*}

Note also that the sum of two control functions is a control function.
Proposition \ref{prop:area-variation} directly follows from the next result:

\begin{proposition}\label{prop:area-variation-precise}
  Let $(Y_t)_{t \in
  [0, T]}$ be a c\`adl\`ag adapted process such that $\| Y \|_{p , [0, T]} <
  \infty$ and let $N$ be a square-integrable martingale. Set $A_{s, t} \assign \int_s^t Y_{r -} d N_r - Y_{s } N_{s, t}$. Then we have for all $p,q > 2$ and all $r > \left( \frac{1}{p} + \frac{1}{q}
  \right)^{- 1}$:
  \begin{equation}\label{eq:area-variation-1}
    \| A \|_{r , [0, T]} \lesssim \left(1 + | \log \| Y \|_{p , [0, T]} |\right) \| Y
    \|_{p , [0, T]} \left( K^{\frac{1}{q}} + \| N \|_{q , [0, T]} \right),
  \end{equation}
  where $K$ is a random variable with $\mathbb{E} [K^{\frac{2}{q}}] \lesssim
  \mathbb{E} [\langle N \rangle_T]$. In particular, we get for $\varepsilon >
  0$
  \begin{equation}\label{eq:area-variation-2}
  	\mathbb{E} \left[\| A \|_{r , [0, T]}^{1 - \varepsilon}\right] \lesssim \left(1
     +\mathbb{E} \left[\| Y \|_{p , [0, T]}^2\right]^{1 / 2}\right) \mathbb{E} [\langle N
     \rangle_T]^{1 / 2} .
  \end{equation}

\end{proposition}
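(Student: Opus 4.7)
The goal is to prove the pathwise bound~\eqref{eq:area-variation-1} first and then deduce the moment estimate~\eqref{eq:area-variation-2} as a direct H\"older consequence. The starting point is the Chen-type algebraic identity
\[
A_{s,t} - A_{s,u} - A_{u,t} = -Y_{s,u}\,N_{u,t}, \qquad s\le u \le t,
\]
which follows immediately from the definition $A_{s,t}=\int_s^t(Y_{r-}-Y_s)\dd N_r$. Were one to estimate the right-hand side using only the $p$-variation of $Y$ and the $q$-variation of $N$, the Young sewing lemma would require $1/p+1/q>1$ to produce a finite $r$-variation bound on $A$. Since $p,q>2$, we have $1/p+1/q<1$, so the Young approach fails outright. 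This is the central obstacle: one must exploit the martingale structure of $N$ (without assuming it for $Y$) to close the gap.

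\emph{Main argument.} I would run a sewing argument over a dyadic sequence of partitions. On each dyadic interval $[s,t]$ one uses It\^o's isometry to get
\[
\bbE\bigl[\,|A_{s,t}|^2 \bigm| \mc F_s\bigr] \le \|Y\|_{p,[s,t]}^2 \,\bbE[\langle N\rangle_{s,t} \mid \mc F_s],
\]
since $|Y_{r-}-Y_s|\le \|Y\|_{p,[s,t]}$, and L\'epingle's $q$-variation BDG provides a random control $K$ with $\|N\|_{q,[0,T]}^q \le K$ and $\bbE[K^{2/q}] \lesssim \bbE[\langle N\rangle_T]$. Telescoping Chen's relation across a dyadic refinement of an arbitrary partition $\pi$, and combining the two estimates at each scale, one obtains a geometric sum that converges thanks to the strict inequality $r>(1/p+1/q)^{-1}$. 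Optimizing the dyadic cut-off scale against $\|Y\|_{p,[0,T]}$ is precisely what produces the factor $1+|\log\|Y\|_p|$ in the final bound: truncating the chain at level $j^\ast \sim \log \|Y\|_p$ balances the two regimes.

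\emph{Corollary and main difficulty.} To derive~\eqref{eq:area-variation-2}, raise~\eqref{eq:area-variation-1} to the power $1-\varepsilon$ and apply H\"older's inequality with three exponents arranged so that (a) the factor $\|Y\|_p^{1-\varepsilon}$ is bounded by $\bbE[\|Y\|_p^2]^{(1-\varepsilon)/2}$, (b) the terms $K^{(1-\varepsilon)/q}$ and $\|N\|_q^{1-\varepsilon}$ are bounded by $\bbE[\langle N\rangle_T]^{1/2}$ via the defining property of $K$ and L\'epingle's BDG, and (c) the logarithmic factor is absorbed because $(1+|\log x|)^{1-\varepsilon}$ is dominated by $x^{\pm\varepsilon'}$ for any small $\varepsilon'>0$ and can be compensated by a slight perturbation of the exponent on $\|Y\|_p$. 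The hardest technical point, where the proof really has to be careful, is the sewing step itself: producing a pathwise $r$-variation bound whose random weights still have good $L^{2/q}$-tails against $\bbE[\langle N\rangle_T]$. This is exactly where the strict inequality $r>(1/p+1/q)^{-1}$ and the logarithmic loss enter, reflecting the need to balance pathwise variation control on $Y$ against martingale-type (variance) control on $N$.
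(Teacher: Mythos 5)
Your high-level diagnosis is right: Chen's relation gives $A_{s,t}-A_{s,u}-A_{u,t}=-Y_{s,u}N_{u,t}$, the naive Young sewing fails because $1/p+1/q<1$, the martingale structure of $N$ must be exploited, and a scale-optimization is what produces the logarithmic factor. But the concrete steps you propose have genuine gaps, and they diverge from the mechanism in the paper in ways that matter.

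First, the ``It\^o isometry'' inequality $\bbE[|A_{s,t}|^2\mid\mc F_s]\le\|Y\|_{p,[s,t]}^2\,\bbE[\langle N\rangle_{s,t}\mid\mc F_s]$ is not valid as written. It\^o's isometry gives $\bbE[|A_{s,t}|^2\mid\mc F_s]=\bbE[\int_s^t|Y_{r-}-Y_s|^2\,\dd\langle N\rangle_r\mid\mc F_s]$, but the factor $\|Y\|_{p,[s,t]}^2$ you want to pull outside depends on the whole path of $Y$ on $[s,t]$ and is \emph{not} $\mc F_s$-measurable, so you cannot extract it from the conditional expectation. This is not a cosmetic issue: the entire point of the paper's construction is to avoid exactly this obstruction by switching from time-scale discretization to \emph{value-scale} discretization. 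The paper introduces stopping times $\tau^n_k=\inf\{t\ge\tau^n_{k-1}:|Y_{\tau^n_{k-1},t}|\ge 2^{-n}\}$ and the piecewise-constant approximant $Y^n$, which satisfies $\|Y_--Y^n\|_\infty\le 2^{-n}$ deterministically. The error integral $\int(Y_{r-}-Y^n_r)\dd N_r$ then admits a pathwise bound via its $q$-variation, and the random variable $K$ in the statement is a weighted \emph{sum} over all scales $m$ of these $q$-variations, $K=\sum_m|m|^{-q}2^{mq}\|\int(Y_--Y^m)\dd N\|_{q,[0,T]}^q$; it is not merely a control for $\|N\|_q$ as you describe. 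The bound $\bbE[K^{2/q}]\lesssim\bbE[\langle N\rangle_T]$ then comes from L\'epingle/BDG applied to each error integral together with $|Y_--Y^m|\le 2^{-m}$, and the geometry of the weights.

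Second, and relatedly, your sketch produces moment estimates scale by scale but does not explain how to turn these into a \emph{pathwise} bound on the random variable $\|A\|_{r,[0,T]}$, which is what inequality~\eqref{eq:area-variation-1} requires. A $p$-variation bound needs a superadditive control $c(s,t)$ with $|A_{s,t}|^r\le(\text{random const})\cdot c(s,t)$. The paper builds such a $c$ explicitly from $\|Y\|_{p,[s,t]}^p$, $\|N\|_{q,[s,t]}^q$, and the $q$-variations of the error integrals. The Young-style point-deletion argument is then run on the \emph{deterministic-size} partition induced by the stopping times $\tau^n_k$: the number of such times in $(s,t)$ is bounded by $2^{np}\|Y\|_{p,[s,t]}^p\le 2^{np}c(s,t)\|Y\|_p^p$, which is the combinatorial input that makes the Young sum summable with the lossy factor $2^{np(1-1/p-1/q)}$. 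With dyadic \emph{time} intervals there is no such path-dependent cardinality bound, and $\|Y\|_{p,[u,v]}$ over a small time interval need not be small (jumps of $Y$ prevent any uniform decay), so the geometric sum you invoke does not obviously converge. Finally, the balancing $2^n c(s,t)^{1/p}\|Y\|_p\approx 1$ in the paper is done \emph{per pair} $(s,t)$, not once and for all against $\|Y\|_p$; the $|\log\|Y\|_p|$ factor appears after splitting $|n|\simeq|\log(c(s,t)^{1/p}\|Y\|_p)|$ and absorbing the $|\log c(s,t)^{1/p}|$ part into a small power of $c$. Your Corollary step~\eqref{eq:area-variation-2} is in the right spirit but depends on these corrections to~\eqref{eq:area-variation-1}.
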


\begin{remark}
  For $p < 2$ it follows directly from Young integration estimates that $\| A
  \|_{1 / (1 / p + 1 / r), [0, T]} \lesssim \| Y \|_{p, [0, T]} \| N \|_{r,
  [0, T]}$ whenever $r > 2$ is such that $1 / p + 1 / r > 1$.
\end{remark}

\begin{proof}
  Define the stopping times $\tau^n_0 := 0$ and $\tau^n_{k + 1} \assign \inf \{ t \geqslant
  \tau^n_k : | Y_{\tau^n_k, t} | \geqslant 2^{- n} \}$ and set
  \[ Y^n_t := \sum_{k = 0}^{\infty} \mathds{1}_{(\tau^n_k, \tau^n_{k +
     1}]} (t) Y_{\tau^n_k}, \]
  such that $\sup_{t \in [0, T]} | Y_{t -} - Y^n_t | \leqslant 2^{- n}$, where
  $Y_{t -} \assign \lim_{s \uparrow t} Y_s$ and $Y_{0 -} : = Y_0$ and we also
  write $(Y_-)_t \assign Y_{t -}$. We have
  \begin{equation}\label{eq:area-variation-pr1} | A_{s, t} | \leqslant \left| \int_s^t (Y_{r -} -
    Y^n_r) d N_r \right| + \left| \int_s^t Y^n_r d N_r - Y_{s } N_{s, t}
    \right| .
  \end{equation}
  The first term on the right hand side is bounded for $q > 2$ and $n \in
  \mathbb{Z} \setminus \{ 0 \}$ by
  \begin{align}\label{eq:area-variation-pr2}
    \left| \int_s^t (Y_{r -} - Y^n_r) d N_r \right| 
     & \leqslant | n | 2^{- n} c (s, t)^{\frac{1}{q}}
    K^{\frac{1}{q}},
  \end{align}
  where we define
  \begin{equation}\label{eq:area-variation-pr3}
    K \assign \sum_{m \in \mathbb{Z} \setminus \{ 0 \}} | m |^{- q} 2^{m q}
    \left\| \int_0^{\cdummy} (Y_{r -} - Y^m_r) d N_r \right\|_{q , [0, T]}^q
  \end{equation}
  and
  \begin{equation}\label{eq:area-variation-pr4}
    c (s, t) \assign \frac{\sum_{m \in \mathbb{Z} \setminus \{ 0 \}} | m |^{-
    q} 2^{m q} \left\| \int_0^{\cdummy} (Y_{r -} - Y^m_r) d N_r \right\|_{q ,
    [s, t]}^q}{\sum_{m \in \mathbb{Z} \setminus \{ 0 \}} | m |^{- q} 2^{m q}
    \left\| \int_0^{\cdummy} (Y_{r -} - Y^m_r) d N_r \right\|_{q , [0, T]}^q}
    + \frac{\| Y_- \|_{p , [s, t]}^p + \| Y \|_{p , [s, t]}^p}{\| Y \|_{p ,
    [0, T]}^p} + \frac{\| N \|_{q , [s, t]}^q}{\| N \|_{q , [0, T]}^q} .
  \end{equation}
  Note that $\| Y \|_{p , [0, T]}^p = \| Y_- \|_{p , [0, T]}^p$, and therefore
  $c (s, t) \leqslant c (0, T) = 4$.

  To bound the second term in \eqref{eq:area-variation-pr1} let $t_0 \assign \min \{
  \tau^n_k : \tau^n_k \in (s, t) \} \wedge t$. If $t_0 = \tau^n_{k_0} < t$, we
  let $\tau^n_{k_0 + m - 1}$ be the maximal $\tau^n_k \in (s, t)$, for $m
  \geqslant 1$, and we write $t_k \assign \tau^n_{k_0 + k}$ for $k = 1, \ldots, m
  - 1$, while $t_m \assign t$. Otherwise we set $m \assign 0$. Then
  \begin{align}\label{eq:area-variation-pr5} \nonumber
    \left| \int_s^t Y^n_r d N_r - Y_{s } N_{s, t} \right| \leqslant & \left|
    \int_s^{t_0} Y^n_r d N_r - Y_{s } N_{s, t_0} \right| + \left|
    \int_{t_0}^{t_m} (Y^n_r - Y_{t_0}^n) d N_r \right|\\
     & + | (Y_{t_0}^n - Y_{t_0 }) N_{t_0, t_m} | + |
    (Y_{t_0 } - Y_{s }) N_{t_0, t_m} |,
  \end{align}
  The first and
  third term on the right hand side are bounded by
  \begin{align}\label{eq:area-variation-pr6} \nonumber
    \left| \int_s^{t_0} Y^n_r d N_r - Y_{s } N_{s, t_0} \right| + |
    (Y_{t_0}^n - Y_{t_0 }) N_{t_0, t_m} | & = | Y^n_{s } N_{s, t_0} - Y_{s
    } N_{s, t_0} | + | (Y_{t_0}^n - Y_{t_0 }) N_{t_0, t_m} |\\
    & \leqslant 2 \times 2^{- n} c (s, t)^{\frac{1}{q}} \| N \|_{q , [0, T]},
  \end{align}
  and the last term is controlled by
  \[ | (Y_{t_0} - Y_{s}) N_{t_0, t_m} | \leqslant c (s, t)^{\frac{1}{p} +
     \frac{1}{q}} \| Y \|_{p , [0, T]} \| N \|_{q , [0, T]} . \]
  To bound the second term in~{\eqref{eq:area-variation-pr5}} we use an idea from \cite[Theorem~4.12]{Perkowski2016}: We apply Young's maximal inequality despite
  the fact that $Y_-$ and $N$ are not sufficiently regular for the construction of the Young integral. This
  will give us a divergent factor in $n$, but on the other hand it gives us a large power of $c (s, t)$. Then we
  balance this term with the other terms in the upper bound for $| A_{s, t} |$
  (which all contain a factor $2^{- n}$) by choosing the right $n$. Young's idea
  is to successively delete points from the partition $t_0 < \cdots <
  t_m$ in order to pass from $\sum_{k = 0}^{m - 1} Y_{t_k } N_{t_k, t_{k +
  1}}$ to $Y_{t_0 } N_{t_0, t_m}$. We want to delete the points in an optimal
  way, and to express what optimal means we first renormalize $Y$ and $N$:
  \[ \int_{t_0}^{t_m} (Y^n_r - Y_{t_0}^n) d N_r = \sum_{k = 0}^{m - 1} (Y_{t_k
     } - Y_{t_0 }) N_{t_k, t_{k + 1}} = \sum_{k = 0}^{m - 1} (\tilde{Y}_{t_k
     } - \tilde{Y}_{t_0 }) \tilde{N}_{t_k, t_{k + 1}} \| Y \|_{p , [0, T]}
     \| N \|_{q , [0, T]}, \]
  where $\tilde{Y} = \frac{Y}{\| Y \|_{p , [0, T]}}$ and $\tilde{N} =
  \frac{N}{\| N \|_{q , [0, T]}}$. Then $c$ controls  $\tilde{Y}$ and $\tilde{N}$, and by the superadditivity
  of $c$ there exists $\ell \in \{ 1, \ldots, m - 1 \}$ with $c
  (t_{\ell - 1}, t_{\ell + 1}) \leqslant \frac{2}{m - 1} c (s, t)$ whenever $m
  > 1$ (for $m = 1$ the integral vanishes). By deleting the point $t_{\ell}$
  from the partition and subtracting the resulting expression, we get
  \begin{align*}
    | \tilde{Y}_{t_{\ell - 1} } \tilde{N}_{t_{\ell - 1}, t_{\ell}} +
    \tilde{Y}_{t_{\ell} } \tilde{N}_{t_{\ell}, t_{\ell + 1}} -
    \tilde{Y}_{t_{\ell - 1} } \tilde{N}_{t_{\ell - 1}, t_{\ell + 1}} | & = |
    \tilde{Y}_{t_{\ell - 1} , t_{\ell} } \tilde{N}_{t_{\ell}, t_{\ell + 1}}
    | \leqslant c (t_{\ell - 1}, t_{\ell + 1})^{\frac{1}{p} + \frac{1}{q}}\\
    & \leqslant \left( \frac{2}{m - 1} c (s, t) \right)^{\frac{1}{p} +
    \frac{1}{q}} .
  \end{align*}
  We proceed by successively deleting all points except $t_0$ and $t_m$ from
  the partition, each time in such an ``optimal'' way, and obtain
  \[ \left| \sum_{k = 0}^{m - 1} (\tilde{Y}_{t_k } - \tilde{Y}_{t_0 })
     \tilde{N}_{t_k, t_{k + 1}} \right| \leqslant \sum_{k = 1}^{m - 1} \left(
     \frac{2}{k} c (s, t) \right)^{\frac{1}{p} + \frac{1}{q}} \lesssim (m -
     1)^{1 - \frac{1}{p} - \frac{1}{q}} c (s, t)^{\frac{1}{p} + \frac{1}{q}} .
  \]
  Moreover,
  \[ m - 1 =\# \{ k : \tau^n_k \in (\tau^n_{k_0}, t) \} \leqslant 2^{n p} \| Y
     \|_{p , [s, t]}^p \leqslant 2^{n p} c (s, t) \| Y \|_{p , [0, T]}^p . \]
  So overall
  \begin{equation}\label{eq:area-variation-pr7}
    \left| \int_{t_0}^{t_m} (Y^n_r - Y_{t_0}^n) d N_r \right| \lesssim 2^{n p
    \left( 1 - \frac{1}{p} - \frac{1}{q} \right)} c (s, t) \| Y \|_{p , [0,
    T]}^{p \left( 1 - \frac{1}{p} - \frac{1}{q} \right) + 1} \| N \|_{q , [0,
    T]} .
  \end{equation}
  We combine \eqref{eq:area-variation-pr1}, \eqref{eq:area-variation-pr2}, \eqref{eq:area-variation-pr5}, \eqref{eq:area-variation-pr6}, \eqref{eq:area-variation-pr7},  and obtain the key
  bound
  \begin{align*}
    | A_{s, t} | \lesssim & | n | 2^{- n} c (s, t)^{\frac{1}{q}} \left(
    K^{\frac{1}{q}} + \| N \|_{q , [0, T]} \right) + 2^{n p \left( 1 -
    \frac{1}{p} - \frac{1}{q} \right)} c (s, t) \| Y \|_{p , [0, T]}^{p \left(
    1 - \frac{1}{p} - \frac{1}{q} \right) + 1} \| N \|_{q , [0, T]}\\
    & + c (s, t)^{\frac{1}{p} + \frac{1}{q}} \| Y \|_{p , [0, T]} \| N \|_{q
    , [0, T]} .
  \end{align*}
  To balance the first and
  second term, choose $n \in \mathbb{Z} \setminus \{ 0 \}$ so that $\frac{1}{2} < 2^{n} c (s, t)^{\frac{1}{p}} \| Y \|_{p , [0,T]}\leqslant 2$. Then
  \begin{align*}
    | A_{s, t} | & \lesssim | n | c (s, t)^{\frac{1}{p} + \frac{1}{q}} \| Y
    \|_{p , [0, T]} \left( K^{\frac{1}{q}} + \| N \|_{q , [0, T]} \right)\\
    & \simeq \left| \log \left( c (s, t)^{\frac{1}{p}} \| Y \|_{p , [0, T]}
    \right) \right| c (s, t)^{\frac{1}{p} + \frac{1}{q}} \| Y \|_{p , [0, T]}
    \left( K^{\frac{1}{q}} + \| N \|_{q , [0, T]} \right)\\
    & \leqslant \left( \left| \log \left( c (s, t)^{\frac{1}{p}} \right)
    \right| + | \log \| Y \|_{p , [0, T]} | \right) c (s, t)^{\frac{1}{p} +
    \frac{1}{q}} \| Y \|_{p , [0, T]} \left( K^{\frac{1}{q}} + \| N \|_{q ,
    [0, T]} \right)
  \end{align*}
  and since $c (s, t)^{\frac{1}{p}} \leqslant 4$ we have $\left| \log \left( c
  (s, t)^{\frac{1}{p}} \right) \right| \lesssim c (s, t)^{- \varepsilon}$ for
  $\varepsilon > 0$. Thus, we get for all $r > \left( 1/p +
  1/q \right)^{- 1}$ our first inequality~\eqref{eq:area-variation-1}. This yields
  \begin{align*}
    \mathbb{E} [\| A \|_{r , [0, T]}^{1 - \varepsilon}] & \lesssim \mathbb{E}
    \big[\big((1 + | \log \| Y \|_{p , [0, T]} |) \| Y \|_{p , [0, T]}\big)^{2 - 2
    \varepsilon}\big]^{\frac{1}{2}} \mathbb{E} \left[ K^{\frac{2}{q}} + \| N \|_{q
    , [0, T]}^2 \right]^{\frac{2 - 2 \varepsilon}{2}} .
  \end{align*}
  The second expectation on the right hand side is easy to control: since
  $2/q < 1$ we have $\left( \sum_m a_m \right)^{2/q} \leqslant
  \sum_m a_m^{2/q}$, and Theorem \ref{thm:lepingle BDG} (L\'epingle's $q$-variation Burkholder-Davis-Gundy inequality), we get
  \begin{align*}
    \mathbb{E} \left[ K^{\frac{2}{q}} + \| N \|_{q , [0, T]}^2 \right] &
    \leqslant \sum_{m \in \mathbb{Z} \setminus \{ 0 \}} | m |^{- 2} 2^{2 m}
    \mathbb{E} \left[ \left\| \int_0^{\cdummy} (Y_{r -} - Y^m_r) d N_r
    \right\|_{q , [0, T]}^2 \right] +\mathbb{E} [\| N \|_{q , [0, T]}^2]\\
    & \lesssim \sum_{m \in \mathbb{Z} \setminus \{ 0 \}} | m |^{- 2} 2^{2 m}
    \mathbb{E} \left[ \left[ \int_0^{\cdummy} (Y_{r -} - Y^m_r) d N_r
    \right]_T \right] +\mathbb{E} [[N]_T]\\
    & = \sum_{m \in \mathbb{Z} \setminus \{ 0 \}} | m |^{- 2} 2^{2 m}
    \mathbb{E} \left[_{} \left\langle \int_0^{\cdummy} (Y_{r -} - Y^m_r) d N_r
    \right\rangle_T \right] +\mathbb{E} [\langle N \rangle_T]\\
    & = \sum_{m \in \mathbb{Z} \setminus \{ 0 \}} | m |^{- 2} 2^{2 m}
    \mathbb{E} \left[_{} \int_0^T (Y_{r -} - Y^m_r)^2 d \langle N \rangle_r
    \right] +\mathbb{E} [\langle N \rangle_T]\\
    & \lesssim \mathbb{E} [\langle N \rangle_T] .
  \end{align*}
  The remaining expectation is bounded by
  \[ \mathbb{E} [((1 + | \log \| Y \|_{p , [0, T]} |) \| Y \|_{p , [0, T]})^{2
     - 2 \varepsilon}] \lesssim 1 +\mathbb{E} [\| Y \|_{p , [0, T]}^2], \]
  and this concludes the proof.
\end{proof}

\appendix\section{Auxiliary estimates}

\begin{lemma}[Iterated Kipnis-Varadhan estimate]
  \label{lem:iterated-kv}
  Let $H \in \mathcal{H}^{- 1} \cap L^2 (\pi)$ and let
  $A$ be a continuous adapted process of finite variation. Then
  \[ \mathbb{E} \left[ \sup_{t \leqslant T} \left| \int_0^t A_s H (X_s) \dd
     s \right| \right] \lesssim \mathbb{E} [\sup_{t \leqslant T} | A_t |^2]^{1
     / 2} T^{1 / 2} \| H \|_{- 1}, \]
  so in particular we get for $A_t = \int_0^t G (X_s) \dd s$ with $G \in
  \mathcal{H}^{- 1} \cap L^2 (\pi)$
  \[ \mathbb{E} \left[ \sup_{t \leqslant T} \left| \int_0^t \int_0^s G (X_r)
     \dd r H (X_s) \dd s \right| \right] \lesssim T \| G \|_{- 1} \| H
     \|_{- 1} . \]
\end{lemma}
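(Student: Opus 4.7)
The plan is to combine the Kipnis--Varadhan martingale approximation of $I_t := \int_0^t H(X_s)\,\dd s$ with the Burkholder--Davis--Gundy inequality in its $L^1$ form, exploiting the fact that $A$ is predictable (being continuous and adapted). Since $\mathcal{L}_S\mathcal{C}$ is dense in $\mathcal{H}^{-1}$ (Claim~B, p.~42 of \cite{Komorowski2012}), I would first approximate $H$ by $\mathcal{L}_S\Psi$ with $\Psi\in\mathcal{C}$ and $\|H-\mathcal{L}_S\Psi\|_{-1}$ arbitrarily small, then apply the forward--backward decomposition of Lemma~\ref{lem:fb-decomp} to represent
\[
\int_0^t \mathcal{L}_S\Psi(X_s)\,\dd s \;=\; \tfrac12\bigl(M^{\Psi}_t + \hat M^{\Psi}_T - \hat M^{\Psi}_{T-t}\bigr),
\]
where $M^\Psi, \hat M^\Psi$ are martingales (in the forward resp.\ backward filtration) with $\mathbb{E}[\langle M^\Psi\rangle_T] = \mathbb{E}[\langle \hat M^\Psi\rangle_T] = 2T\|\Psi\|_1^2$, and $\|\Psi\|_1 = \|\mathcal{L}_S\Psi\|_{-1}$ can be made arbitrarily close to $\|H\|_{-1}$.

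For the forward piece $\int_0^t A_s\,\dd M^{\Psi}_s$, BDG in $L^1$ gives
\[
\mathbb{E}\bigg[\sup_{t\le T}\Big|\int_0^t A_s\,\dd M^{\Psi}_s\Big|\bigg] \;\le\; C\,\mathbb{E}\Big[\Big(\int_0^T A_s^2\,\dd\langle M^{\Psi}\rangle_s\Big)^{1/2}\Big],
\]
and pulling $\sup_{s\le T}|A_s|$ out of the square root followed by Cauchy--Schwarz bounds this by $C\,\mathbb{E}[\sup|A|^2]^{1/2}\,\mathbb{E}[\langle M^{\Psi}\rangle_T]^{1/2} \le C\sqrt{2T}\,\|\Psi\|_1\,\mathbb{E}[\sup|A|^2]^{1/2}$, which is of the desired order. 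The residual $\int_0^t A_s(H - \mathcal{L}_S\Psi)(X_s)\,\dd s$ is treated via Lemma~\ref{lem:kv-p-var} applied to $H-\mathcal{L}_S\Psi$, together with the integration by parts $\int_0^t A_s\,\dd J_s = A_t J_t - \int_0^t J_s\,\dd A_s$ (where $J_t = \int_0^t(H-\mathcal{L}_S\Psi)(X_s)\,\dd s$) after localizing $A$ by a stopping time that caps its total variation; sending the cap to infinity and $\|H-\mathcal{L}_S\Psi\|_{-1}\to 0$ makes this error arbitrarily small.

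The main obstacle is the backward integral $\int_0^t A_s\,\dd(\hat M^{\Psi}_T - \hat M^{\Psi}_{T-s})$: the integrand $A$ is forward-adapted and \emph{not} adapted to the backward filtration $\hat{\mathcal{F}}$ in which $\hat M^{\Psi}$ is a martingale, so standard BDG does not apply directly. My plan is to change variables $u=T-s$, view $\hat M^{\Psi}$ as a forward martingale with respect to the time-reversed process $\tilde X_u = X_{T-u}$ (whose generator is $\mathcal{L}^\ast$), and orthogonally decompose the integrand as $A_{T-u} = \mathbb{E}[A_{T-u}\mid X_{T-u}] + \eta_u$. The first piece is $\sigma(X_{T-u})\subset\hat{\mathcal{F}}_u$-measurable so BDG applies with the same bound as in the forward case, while the Markov property makes $\eta_u$ conditionally mean-zero and independent of $\hat{\mathcal{F}}_u$ given $X_{T-u}$, so its integral against $\dd\hat M^{\Psi}_u$ vanishes in $L^2$.

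The second (iterated) estimate is then immediate: taking $A_t=\int_0^t G(X_s)\,\dd s$, Lemma~\ref{lem:kv-p-var} gives $\mathbb{E}[\sup_{t\le T}|A_t|^2]^{1/2}\lesssim \sqrt{T}\,\|G\|_{-1}$, so the first estimate yields the bound $\sqrt T\,\|G\|_{-1}\cdot\sqrt T\,\|H\|_{-1} = T\,\|G\|_{-1}\|H\|_{-1}$.
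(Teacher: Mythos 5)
Your high-level route (approximate $H$ by $\mathcal{L}_S\Psi$ using density in $\mathcal{H}^{-1}$, the forward--backward martingale decomposition of Lemma~\ref{lem:fb-decomp}, then BDG and Cauchy--Schwarz, closing with the resolvent estimates) is the same as the paper's, and your treatment of the forward stochastic integral and the final deduction of the iterated bound from Lemma~\ref{lem:kv-p-var} are fine.

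The genuine gap is in the backward integral. You split $A_{T-u}=\mathbb{E}[A_{T-u}\mid X_{T-u}]+\eta_u$ and claim that $\int\eta_u\,\dd\hat{M}^\Psi_u$ ``vanishes in $L^2$'' because $\eta_u$ has zero conditional mean given $X_{T-u}$ and is conditionally independent of $\hat{\mathcal{F}}_u$. That conclusion is false: $\eta_u$ is not $\hat{\mathcal{F}}_u$-adapted, so $\int\eta_u\,\dd\hat{M}^\Psi_u$ is only a pathwise Stieltjes integral, not a backward It\^o integral, and the It\^o isometry gives you nothing; at best your conditional orthogonality makes a one-sided Riemann approximation mean-zero term by term, which is far from $L^2$-nullity and says nothing about $\mathbb{E}[\sup_t|\cdot|]$. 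The adapted piece has its own problem: BDG would then require $\mathbb{E}[\sup_u|\mathbb{E}[A_{T-u}\mid X_{T-u}]|^2]\lesssim\mathbb{E}[\sup_t|A_t|^2]$, which does not follow from Jensen since the supremum does not commute with the conditional expectation. What you missed is the much simpler device in the paper: after pulling $A_T$ out, the backward integrand becomes the increment $A_T-A_{T-s}$, which for the processes $A$ to which the lemma is applied (e.g.\ $A_t=\int_0^t G(X_r)\,\dd r$, so $A_T-A_{T-s}=\int_{T-s}^T G(X_r)\,\dd r$) \emph{is} $\hat{\mathcal{F}}_s$-measurable; BDG then applies directly with the crude bound $|A_T-A_{T-s}|\le 2\sup_t|A_t|$, and the boundary term $\tfrac12 A_T(\hat{M}^\Psi_T-\hat{M}^\Psi_{T-t})$ is handled by Cauchy--Schwarz together with BDG for $\hat{M}^\Psi$ itself. (A secondary remark: the paper bounds the remainder $\int_0^t A_s(H-\mathcal{L}_S\Psi)(X_s)\,\dd s$ simply by pulling out $\sup|A|$ and estimating $\int_0^T|(H-\mathcal{L}_S\Psi)(X_s)|\,\dd s$ in $L^1$ by $T\|H-\mathcal{L}_S\Psi\|_{L^2(\pi)}$; your integration by parts brings in $\int J\,\dd A$, whose control needs the $1$-variation of $A$, so the localization you propose is considerably more delicate than necessary.)
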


\begin{proof}
  The second inequality follows from the first one together with the usual
  Kipnis-Varadhan estimate from Lemma~\ref{lem:kv-p-var}. To show the first
  inequality, let $\Psi \in \mathcal{C}$ and apply
  Lemma~\ref{lem:fb-decomp}:
\begin{align*}
    \int_0^t A_s H (X_s) \dd s & = \frac{1}{2} \int_0^t A_s \dd M^\Psi_s -
    \frac{1}{2} \int_{T - t}^T (A_T - A_{T - s}) \dd r \dd \hat{M}^\Psi_s\\
    & \quad + \frac{1}{2} A_T (\hat{M}^\Psi_T - \hat{M}^\Psi_{T - t}) + \int_0^t
    A_s (H (X_s) -\mathcal{L}_S \Psi (X_s)) \dd s,
  \end{align*}
  where we need that $A$ is continuous and of finite variation in order to
  interpret the integrals against $\hat{M}^\Psi$ in a pathwise sense and without
  having to worry about the difference of forward and backward integral. Now
  we get from the Burkholder-Davis-Gundy and Cauchy-Schwartz inequalities
  together with Lemma~\ref{lem:kv-p-var}
\begin{align*}
    \mathbb{E} \left[ \sup_{t \leqslant T} \left| \int_0^t A_s H (X_s) \dd
    s \right| \right] & \lesssim \mathbb{E} [\sup_{t \leqslant T} | A_t
    |^2]^{1 / 2} T^{1 / 2} \| \Psi \|_1\\
    & \quad +\mathbb{E} \left[ \sup_{t \leqslant T} \left| \int_0^t A_s (H
    (X_s) -\mathcal{L}_S \Psi (X_s)) \dd s \right| \right]\\
    & \lesssim \mathbb{E} [\sup_{t \leqslant T} | A_t |^2]^{1 / 2} (T^{1 / 2}
    \| \Psi \|_1 + T \| H -\mathcal{L}_S \Psi \|_{L^2 (\pi)}) .
  \end{align*}
  By approximation we can take $\Psi = \Phi^H_{\lambda}$ as the solution to
  the Poisson equation $(\lambda -\mathcal{L}_S) \Phi^H_{\lambda} = - H$ and
  as in the proof of Corollary~\ref{cor:p-var-iterated-kv} we use that $\|
  \Phi^H_{\lambda} \|_1 \leqslant \| H \|_{- 1}$ for all $\lambda > 0$ and
  that $\| H -\mathcal{L}_S \Phi^H_{\lambda} \|_{L^2 (\pi)} \rightarrow 0$ as
  $\lambda \rightarrow 0$ to deduce the claimed estimate.
\end{proof}

The following is the L\'epingle p-variation inequality \cite[Proposition~2]{Lepingle1975} which is here commonly combined with the well-known Burkholder-Davis-Gundy inequality.
\begin{theorem}[L\'epingle $p$-variation Burkholder-Davis-Gundy inequality]\label{thm:lepingle BDG}
Let $(M_t)_{t \geqslant 0}$ be a local martingale with trajectories in $D (\mathbb{R}_+, \mathbb{R}^m)$. For every $T>0$ and $p>2$
\[
c_p \bbE \left[ [M]_T \right] \leqslant  \bbE \left[ \| {M} \|_{p , [0, T]}^2 \right]
\leqslant
C_p \bbE \left[ [M]_T \right],
\]
where $c_p,C_p>0$.
\end{theorem}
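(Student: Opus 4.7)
The lower bound follows from the classical Burkholder--Davis--Gundy inequality applied to the supremum. For any $t \in [0,T]$, choosing the partition $\{0,t,T\}$ gives $|M_t - M_0|^p \leqslant \|M\|_{p,[0,T]}^p$, so $\|M\|_{p,[0,T]} \geqslant \|M - M_0\|_{\infty,[0,T]}$, and the classical BDG inequality yields
\[
\bbE\bigl[\|M\|_{p,[0,T]}^2\bigr] \;\geqslant\; \bbE\bigl[\|M-M_0\|_{\infty,[0,T]}^2\bigr] \;\gtrsim\; \bbE[[M]_T].
\]

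For the upper bound, which is the true content of L\'epingle's inequality, the plan is a stopping-time based dyadic decomposition. For each $n \in \bbN$ set $\tau^n_0 := 0$ and
\[
\tau^n_{k+1} \;:=\; \inf\{t \geqslant \tau^n_k : |M_t - M_{\tau^n_k}| > 2^{-n}\} \wedge T,
\]
and let $N_n := \#\{k \geqslant 1 : \tau^n_k < T\}$. By matching each increment of an arbitrary partition $0=s_0<\cdots<s_K=T$ to the dyadic level at its magnitude, one obtains the pathwise bound
\[
\|M\|_{p,[0,T]}^p \;\lesssim\; \sum_{n \geqslant 0} 2^{-np} N_n \,+\, \sum_{0 < s \leqslant T} |\Delta M_s|^p.
\]
Optional stopping applied to the martingale $M^2 - [M]$ gives $\bbE[(M_{\tau^n_{k+1}} - M_{\tau^n_k})^2] = \bbE[[M]_{\tau^n_{k+1}} - [M]_{\tau^n_k}]$, which combined with the lower bound $|M_{\tau^n_{k+1}} - M_{\tau^n_k}| \geqslant 2^{-n}$ on the event $\{\tau^n_{k+1} < T\}$ yields $2^{-2n}\, \bbE[N_n] \lesssim \bbE[[M]_T]$. (By a standard localisation argument one may reduce to square-integrable $M$; otherwise one stops at $\inf\{t : [M]_t > R\}$ and lets $R \to \infty$.)

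The main technical hurdle is that combining the two displays above yields only $\bbE[\|M\|_{p,[0,T]}^p] \lesssim \bbE[[M]_T]$, while the statement calls for the $L^2$ bound $\bbE[\|M\|_{p,[0,T]}^2] \lesssim \bbE[[M]_T]$; naive manipulations with $2/p < 1$ give the wrong scaling in $\bbE[[M]_T]$. L\'epingle's way of closing this exponent gap is via a good-$\lambda$ inequality: there exist $\beta > 1$ and a function $\epsilon(\delta) \to 0$ as $\delta \to 0$ such that, for all $\lambda > 0$,
\[
\bbP\bigl(\|M\|_{p,[0,T]} > \beta\lambda,\; [M]_T^{1/2} \leqslant \delta\lambda\bigr) \;\leqslant\; \epsilon(\delta)\, \bbP\bigl(\|M\|_{p,[0,T]} > \lambda\bigr).
\]
This is established by localising at the stopping time $\inf\{t : [M]_t > (\delta\lambda)^2\}$ and applying Doob's maximal inequality to the dyadic approximants built from the $\tau^n_k$. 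Integrating the good-$\lambda$ inequality layer by layer, in the standard manner familiar from BDG theory, yields $\bbE[\|M\|_{p,[0,T]}^q] \lesssim_q \bbE[[M]_T^{q/2}]$ for every $q > 0$; specializing to $q = 2$ gives the claimed upper bound.
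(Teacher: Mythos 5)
The paper does not actually prove this theorem; it is imported from \cite[Proposition~2]{Lepingle1975} and used as a black box, so there is no internal proof to compare against. Your outline reproduces the essential mechanism of a standard proof of L\'epingle's inequality: dyadic stopping-time discretisation of $M$, counting the number of oscillations at each scale via optional stopping applied to $M^2-[M]$, and a good-$\lambda$ inequality to close the exponent gap between the naive $L^p$ estimate and the claimed $L^q$ bound for all $q>0$. The lower bound via $\|M\|_{p,[0,T]} \geqslant \|M-M_0\|_{\infty,[0,T]}$ and classical BDG is correct.

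Two points are glossed over and deserve care. First, the pathwise bound
\[
\|M\|_{p,[0,T]}^p \lesssim \sum_{n\geqslant 0} 2^{-np} N_n + \sum_{0<s\leqslant T}|\Delta M_s|^p
\]
requires an actual matching argument in the c\`adl\`ag setting: for a partition interval $[s,t]$ with $|M_t-M_s|\approx 2^{-n}$ one must show that either $(s,t)$ contains a stopping time at the appropriate scale, or the increment is essentially the jump $\Delta M_t$; the key observation is that on any interval $[\tau^n_k,\tau^n_{k+1})$ the left-limit path oscillates by at most $2\cdot 2^{-n}$, so that jumps at the right endpoint are the only obstruction. Second, even granting that bound, the intermediate conclusion is not quite $\bbE[\|M\|_{p,[0,T]}^p]\lesssim\bbE[[M]_T]$: the jump term satisfies $\sum_s|\Delta M_s|^p\leqslant [M]_T^{p/2}$ by the $\ell^2\hookrightarrow\ell^p$ embedding, so it contributes $\bbE[[M]_T^{p/2}]$, exhibiting the same exponent mismatch as the main term rather than the clean linear bound you stated. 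Both mismatches are exactly what the good-$\lambda$ inequality is designed to absorb, so the overall plan is sound once these caveats are recorded and the good-$\lambda$ step is carried out with the care $p$-variation demands (superadditivity of the control $\|M\|_{p,[\cdot,\cdot]}^p$ across the localising stopping time, plus a separate treatment of the jump at that time).
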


Next lemma is a strengthening of the ergodic theorem to give a path uniform convergence.
\begin{lemma}\label{lem:stationary-uniform} Let $(Y_t)_{t \geqslant 0}$ be a process with trajectories in $D (\mathbb{R}_+, \mathbb{R}^m)$ and with stationary
  increments and such that $\mathbb{E} [\sup_{t \in [0, T]} | Y_t |] \leqslant C T$ for all $T > 0$ and
  such that $n^{- 1} Y_n \rightarrow a$ for some $a \in \mathbb{R}^m$, both
  a.s. and in $L^1$. {Assume also that $(\sup_{t \in [0,T]} n^{-1} |Y_{nt}|)_{n \in \bb N}$ is uniformly integrable for all $T>0$.} Then we have for all $T > 0$
  \[ \lim_{n \rightarrow \infty} \mathbb{E} \left[\sup_{t \leqslant T} | n^{- 1}
     Y_{n t} - a t |\right] = 0. \]
\end{lemma}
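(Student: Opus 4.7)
The plan is to establish pointwise $L^1$ convergence $n^{-1}Y_{nt}\to at$ for every fixed $t\ge 0$, then bound the uniform quantity using a partition of $[0,T]$ together with a stationary-increments estimate on oscillations, and finally upgrade from convergence in probability to $L^1$ convergence via the uniform integrability hypothesis.

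For the pointwise step I would decompose $Y_{nt}=Y_{\lfloor nt\rfloor}+(Y_{nt}-Y_{\lfloor nt\rfloor})$. The hypothesis $n^{-1}Y_n\to a$ in $L^1$ applied along the integer subsequence $\lfloor nt\rfloor\uparrow\infty$ yields $n^{-1}Y_{\lfloor nt\rfloor}=(\lfloor nt\rfloor/n)\cdot Y_{\lfloor nt\rfloor}/\lfloor nt\rfloor\to at$ in $L^1$. By stationary increments, $|Y_{nt}-Y_{\lfloor nt\rfloor}|$ is distributionally dominated by $\sup_{s\in[0,1]}|Y_s-Y_0|$, which lies in $L^1$ thanks to $\bb E[\sup_{s\in[0,1]}|Y_s|]\le C$; dividing by $n$ gives convergence to zero in $L^1$ (and, via Borel--Cantelli, a.s.\ convergence).

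Next, for $\delta>0$ and a partition $0=t_0<t_1<\cdots<t_N=T$ with $t_k-t_{k-1}\le\delta$, the triangle inequality yields
\[
\sup_{t\in[0,T]}\bigl|n^{-1}Y_{nt}-at\bigr|\;\le\;\max_{0\le k\le N}\bigl|n^{-1}Y_{nt_k}-at_k\bigr|\;+\;n^{-1}\omega_n(\delta)\;+\;|a|\delta,
\]
where $\omega_n(\delta):=\sup_{|t-s|\le\delta,\,s,t\in[0,T]}|Y_{nt}-Y_{ns}|$. The first term converges to $0$ in $L^1$ for each fixed $N$ by the previous step, and the third is deterministic and small. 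To control the oscillation, cover $[0,T]$ by blocks $J_k$ of length $2\delta$, so that $\omega_n(\delta)\le 2\max_k\sup_{t\in J_k}|Y_{nt}-Y_{2(k-1)n\delta}|$; by stationary increments each term in the max has the law of $\sup_{s\in[0,2\delta]}|Y_{ns}-Y_0|$, whose expectation divided by $n$ is at most $2C\delta+n^{-1}\bb E[|Y_0|]$, while $n^{-1}\omega_n(\delta)\le 2n^{-1}\sup_{t\le T}|Y_{nt}|$ is uniformly integrable by hypothesis. Combining the a.s.\ convergence of the grid-point maxima with this uniform integrability then lifts the pointwise estimate to the maximum, giving $\limsup_{n\to\infty}\bb E[n^{-1}\omega_n(\delta)]\le C_0\delta$; sending first $n\to\infty$ and then $\delta\to 0$ in the displayed inequality completes the proof.

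The main obstacle is precisely this oscillation estimate, since the naive union bound over the $N=\lceil T/\delta\rceil$ blocks yields only $\bb E[n^{-1}\omega_n(\delta)]\le CT$, independent of $\delta$. Exploiting the uniform integrability hypothesis to truncate at a large level and handle the tail uniformly in $n$, and then using pointwise convergence on a refined sub-grid to identify the limit of the maximum, is essential for obtaining the required $\delta$-scaling.
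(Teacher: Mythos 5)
Your decomposition into grid points, oscillation, and discretization error is a natural first attempt, but the oscillation control is exactly where it breaks, and you acknowledge this yourself without actually closing the hole. The claim ``combining a.s.\ convergence of the grid-point maxima with uniform integrability lifts the pointwise estimate to the maximum, giving $\limsup_n\mathbb{E}[n^{-1}\omega_n(\delta)]\le C_0\delta$'' is not an argument: uniform integrability lets you upgrade \emph{convergence in probability} to $L^1$ convergence, but you have not shown that $n^{-1}\omega_n(\delta)$ converges (in probability or otherwise) to anything $\delta$-small. Pointwise a.s.\ convergence of $n^{-1}Y_{nt}$ at a countable set of times does not control a supremum over an uncountable set of pairs $(s,t)$; attempting to approximate by a refined sub-grid just reintroduces an oscillation term at the smaller mesh, so the argument is circular. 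As you computed, each block oscillation contributes $\approx C\delta$ in $L^1$ and there are $\approx T/\delta$ blocks, so no amount of union bounding recovers the $\delta$-scaling.

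The missing idea, and what the paper uses, is to partition at scale $1/n$ (in $t$) rather than at a fixed scale $\delta$. Concretely, one writes
\[
  |n^{-1}Y_{nt}-at| \;\le\; \frac{1}{n}\sup_{s\in[0,1]}\big|Y_{\lfloor nt\rfloor+s}-Y_{\lfloor nt\rfloor}\big| \;+\; \frac{\lfloor nt\rfloor}{n}\Big|\frac{Y_{\lfloor nt\rfloor}}{\lfloor nt\rfloor}-a\Big| \;+\; \frac{|a|}{n}.
\]
The first (oscillation) term is then $\le T\max_{k\le\lfloor nT\rfloor}\xi_k/\lfloor nT\rfloor$ where $\xi_k:=\sup_{s\in[0,1]}|Y_{k+s}-Y_k|$ is, by stationary increments, an identically distributed sequence in $L^1$; for such sequences $\max_{k\le N}\xi_k/N\to 0$ a.s.\ and in $L^1$ (Borel--Cantelli plus domination, cf.\ Lemma~2.30 of Koralov--Komorowski--Olla). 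This is what controls the oscillation, and it has no analogue in your fixed-$\delta$ block scheme. The uniform integrability hypothesis is used, but for a different term: in the middle summand one splits the max over $k\le\lfloor n\delta\rfloor$ and $\lfloor n\delta\rfloor<k\le\lfloor nT\rfloor$; the small-$k$ part contributes $O(\delta)$ in $L^1$ via the hypothesis $\mathbb{E}[\sup_{t\le T}|Y_t|]\le CT$, while the large-$k$ part tends to zero a.s.\ by the ergodic hypothesis and is uniformly integrable because it is dominated by $\sup_{t\le T}n^{-1}|Y_{nt}|+|a|T$. You should replace your fixed-mesh oscillation bound with the stationary-sequence maximum estimate at mesh $1/n$; the rest of your scaffolding (the $\delta$-split of the grid-point term and the use of UI) is then essentially correct and matches the paper.
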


\begin{proof}
  This follows from a minor adaptation of the proof of Theorem~2.29 in
  {\cite{Komorowski2012}}: Like in that proof we decompose
\begin{align*}
    | n^{- 1} Y_{n t} - a t | & \leqslant \sup_{s \in [0, 1]} \frac{|
    Y_{\lfloor n t \rfloor + s} - Y_{\lfloor n t \rfloor} |}{n} +
    \frac{\lfloor n t \rfloor}{n} \left| \frac{Y_{\lfloor n t
    \rfloor}}{\lfloor n t \rfloor} - a \right| + | a | \left( t -
    \frac{\lfloor n t \rfloor}{n} \right) .
  \end{align*}
  The last term on the right hand side is bounded by $| a | / n$. The first
  term on the right hand side is bounded for all $t \in [0, T]$ by
  \[ \sup_{s \in [0, 1]} \frac{| Y_{\lfloor n t \rfloor + s} - Y_{\lfloor n t
     \rfloor} |}{n} \leqslant T \max_{k \leqslant \lfloor n T \rfloor}
     \frac{\sup_{s \in [0, 1]} | Y_{k + s} - Y_k |}{\lfloor n T \rfloor}, \]
  and by Lemma~2.30 in {\cite{Komorowski2012}} the right hand side vanishes as
  $n \rightarrow \infty$, both a.s. and in $L^1$ {(here we need that $Y$ has stationary increments)}.
  {To handle the last remaining term, we decompose for $\delta \in (0,T)$:
  \[
  	\sup_{t \in [0,T]} \frac{\lfloor n t \rfloor}{n} \left| \frac{Y_{\lfloor n t \rfloor}}{\lfloor n t \rfloor} - a \right| = \max_{k \le \lfloor n T \rfloor} \frac{k}{n} \left| \frac{Y_{k}}{k} - a \right| \le \max_{k \le \lfloor n \delta \rfloor} \frac{k}{n} \left| \frac{Y_{k}}{k} - a \right| + \max_{\lfloor n \delta \rfloor < k \le \lfloor n T \rfloor} \frac{k}{n} \left| \frac{Y_{k}}{k} - a \right|.
  \]
  The second term on the right hand side converges almost surely to zero by assumption, and it is bounded from above by $\sup_{t \in [0,T]} n^{-1} |Y_{nt}|$. Since $(\sup_{t \in [0,T]} n^{-1} |Y_{nt}|)_n$ is uniformly integrable by assumption, this second term also converges to zero in $L^1$. The remaining part satisfies
  \[
  	\bb E \left[ \max_{k \le \lfloor n \delta \rfloor} \frac{k}{n} \left| \frac{Y_{k}}{k} - a \right| \right] \le \frac1n \bb E \left[\max_{k \le \lfloor n \delta \rfloor}|Y_k|\right] + |a|\delta \le C \delta + |a|\delta,
  \]
  where the last part follows by assumption on $Y$. The proof is then completed by sending $\delta \to 0$.
  }
\end{proof}

\bibliography{all}

\newcommand{\etalchar}[1]{$^{#1}$}
\begin{thebibliography}{CFK{\etalchar{+}}19b}

\bibitem[BC17]{Bailleul2017}
I.~Bailleul and R.~Catellier.
\newblock Rough flows and homogenization in stochastic turbulence.
\newblock {\em J. Differential Equations}, 263(8):4894--4928, 2017.

\bibitem[BCCH17]{Bruned2017Renormalising}
Yvain Bruned, Ajay Chandra, Ilya Chevyrev, and Martin Hairer.
\newblock Renormalising {SPDE}s in regularity structures.
\newblock {\em arXiv preprint arXiv:1711.10239}, 2017.

\bibitem[BHZ19]{Bruned2019}
Y.~Bruned, M.~Hairer, and L.~Zambotti.
\newblock Algebraic renormalisation of regularity structures.
\newblock {\em Invent. Math.}, 215(3):1039--1156, 2019.

\bibitem[CF19]{Chevyrev2019}
Ilya Chevyrev and Peter~K. Friz.
\newblock Canonical {RDE}s and general semimartingales as rough paths.
\newblock {\em Ann. Probab.}, 47(1):420--463, 2019.

\bibitem[CFK{\etalchar{+}}19a]{Chevyrev2019Deterministic}
Ilya Chevyrev, Peter~K Friz, Alexey Korepanov, Ian Melbourne, and Huilin Zhang.
\newblock Deterministic homogenization for discrete-time fast-slow systems
  under optimal moment assumptions.
\newblock {\em arXiv preprint arXiv:1903.10418}, 2019.

\bibitem[CFK{\etalchar{+}}19b]{Chevyrev2019Multiscale}
Ilya Chevyrev, Peter~K. Friz, Alexey Korepanov, Ian Melbourne, and Huilin
  Zhang.
\newblock Multiscale systems, homogenization, and rough paths.
\newblock In {\em Probability and analysis in interacting physical systems},
  volume 283 of {\em Springer Proc. Math. Stat.}, pages 17--48. Springer, Cham,
  2019.

\bibitem[CFKM19]{Chevyrev2019Superdiffusive}
Ilya Chevyrev, Peter~K Friz, Alexey Korepanov, and Ian Melbourne.
\newblock Superdiffusive limits for deterministic fast-slow dynamical systems.
\newblock {\em arXiv preprint arXiv:1907.04825}, 2019.

\bibitem[CH16]{Chandra2016}
Ajay Chandra and Martin Hairer.
\newblock An analytic {BPHZ} theorem for regularity structures.
\newblock {\em arXiv preprint arXiv:1612.08138}, 2016.

\bibitem[FGL15]{Friz2015}
Peter Friz, Paul Gassiat, and Terry Lyons.
\newblock Physical {B}rownian motion in a magnetic field as a rough path.
\newblock {\em Trans. Amer. Math. Soc.}, 367(11):7939--7955, 2015.

\bibitem[FV08]{Friz2008}
Peter Friz and Nicolas Victoir.
\newblock The {B}urkholder-{D}avis-{G}undy inequality for enhanced martingales.
\newblock In {\em S\'{e}minaire de probabilit\'{e}s {XLI}}, volume 1934 of {\em
  Lecture Notes in Math.}, pages 421--438. Springer, Berlin, 2008.

\bibitem[FZ18]{friz2018differential}
Peter~K Friz and Huilin Zhang.
\newblock Differential equations driven by rough paths with jumps.
\newblock {\em Journal of Differential Equations}, 264(10):6226--6301, 2018.

\bibitem[GNO15]{Gloria2015}
A.~Gloria, S.~Neukamm, and F.~Otto.
\newblock Quantification of ergodicity in stochastic homogenization: optimal
  bounds via spectral gap on {G}lauber dynamics.
\newblock {\em Invent. Math.}, 199(2):455--515, 2015.

\bibitem[GP18]{Gubinelli2018Energy}
Massimiliano Gubinelli and Nicolas Perkowski.
\newblock Energy solutions of {KPZ} are unique.
\newblock {\em J. Amer. Math. Soc.}, 31(2):427--471, 2018.

\bibitem[JS03]{Jacod2003}
Jean Jacod and Albert~N. Shiryaev.
\newblock {\em {Limit theorems for stochastic processes}}.
\newblock Springer, 2nd edition, 2003.

\bibitem[Kel16]{Kelly2016}
David Kelly.
\newblock Rough path recursions and diffusion approximations.
\newblock {\em Ann. Appl. Probab.}, 26(1):425--461, 2016.

\bibitem[KLO12]{Komorowski2012}
Tomasz Komorowski, Claudio Landim, and Stefano Olla.
\newblock {\em Fluctuations in {M}arkov processes}, volume 345 of {\em
  Grundlehren der Mathematischen Wissenschaften [Fundamental Principles of
  Mathematical Sciences]}.
\newblock Springer, Heidelberg, 2012.
\newblock Time symmetry and martingale approximation.

\bibitem[KM16]{Kelly2016Smooth}
David Kelly and Ian Melbourne.
\newblock Smooth approximation of stochastic differential equations.
\newblock {\em Ann. Probab.}, 44(1):479--520, 2016.

\bibitem[KM17]{Kelly2017}
David Kelly and Ian Melbourne.
\newblock Deterministic homogenization for fast-slow systems with chaotic
  noise.
\newblock {\em J. Funct. Anal.}, 272(10):4063--4102, 2017.

\bibitem[KP91]{Kurtz1991}
Thomas~G. Kurtz and Philip Protter.
\newblock {Weak Limit Theorems for Stochastic Integrals and Stochastic
  Differential Equations}.
\newblock {\em Ann. Probab.}, 19(3):1035 -- 1070, 1991.

\bibitem[KV86]{Kipnis1986}
C.~Kipnis and S.~R.~S. Varadhan.
\newblock Central limit theorem for additive functionals of reversible {M}arkov
  processes and applications to simple exclusions.
\newblock {\em Comm. Math. Phys.}, 104(1):1--19, 1986.

\bibitem[L{\'{e}}p75]{Lepingle1975}
Dominique L{\'{e}}pingle.
\newblock Sur la variation d'ordre {$p$} des martingales locales.
\newblock {\em C. R. Acad. Sci. Paris S\'{e}r. A-B}, 281(21):Aii, A917--A919,
  1975.

\bibitem[LL03]{lejay2003importance}
Antoine Lejay and Terry Lyons.
\newblock On the importance of the levy area for studying the limits of
  functions of converging stochastic processes. application to homogenization.
\newblock In {\em Current Trends in Potential Theory}, volume~7. The Theta
  foundation/American Mathematical Society, 2003.

\bibitem[LO18]{lopusanschi2018ballistic}
Olga Lopusanschi and Tal Orenshtein.
\newblock Ballistic random walks in random environment as rough paths:
  convergence and area anomaly.
\newblock {\em arXiv preprint arXiv:1812.01403}, 2018.

\bibitem[LS18]{lopusanschi2018levy}
Olga Lopusanschi and Damien Simon.
\newblock L{\'e}vy area with a drift as a renormalization limit of markov
  chains on periodic graphs.
\newblock {\em Stochastic Processes and their Applications}, 128(7):2404--2426,
  2018.

\bibitem[Mou12]{Mourrat2012}
Jean-Christophe Mourrat.
\newblock A quantitative central limit theorem for the random walk among random
  conductances.
\newblock {\em Electron. J. Probab.}, 17:no. 97, 17, 2012.

\bibitem[Pel19]{Peligrad2019}
Magda Peligrad.
\newblock A new {CLT} for additive functionals of {M}arkov chains.
\newblock {\em arXiv preprint arXiv:1910.06712}, 2019.

\bibitem[PP16]{Perkowski2016}
Nicolas Perkowski and David~J. Pr{\"o}mel.
\newblock Pathwise stochastic integrals for model free finance.
\newblock {\em Bernoulli}, 22(4):2486--2520, 2016.

\end{thebibliography}
\bibliographystyle{alpha}

\end{document}